\newtheorem{theorem}{Theorem}[section]
\newtheorem{definition}[theorem]{Definition}
\newtheorem{Cor}[theorem]{Corollary}
\newtheorem{fact}[theorem]{Fact}
\newtheorem{lemma}[theorem]{Lemma}
\newtheorem{remark}[theorem]{Remark}
\newtheorem{que}[theorem]{Question}
\newtheorem{notation}[theorem]{Notation}
\newcommand{\lra}{\longrightarrow}
\newcommand{\res}{\upharpoonright}
\newcommand{\seq}{\subseteq}
\newcommand{\bsl}{\backslash}
\newcommand{\es}{\emptyset}
\newcommand{\we}{\wedge}
\newcommand{\ps}{\mathbb{P}}
\newcommand{\R}{\mathbb{R}}
\newcommand{\Q}{\mathbb{Q}}
\newcommand{\N}{\mathbb{N}}
\newcommand{\C}{\mathbb{C}}
\renewcommand{\S}{\mathbb{S}}
\newcommand{\mf}{\mathfrak}
\newcommand{\bb}{\mathbb}
\newcommand{\cal}{\mathcal}
\newcommand{\al}{\alpha}
\newcommand{\be}{\beta}
\newcommand{\ga}{\gamma}
\newcommand{\de}{\delta}
\newcommand{\ka}{\kappa}
\newcommand{\lam}{\lambda}
\newcommand{\vp}{\varphi}
\newcommand{\om}{\omega}
\newcommand{\lb}{\left\{}
\newcommand{\rb}{\right\}}
\newcommand{\la}{\langle}
\newcommand{\ra}{\rangle}
\newcommand{\ran}{\operatorname{ran}}
\newcommand{\dom}{\operatorname{dom}}
\newcommand{\cof}{\operatorname{cof}}
\newcommand{\cf}{\operatorname{cf}}
\newcommand{\Add}{\operatorname{Add}}
\newcommand{\crit}{\operatorname{crit}}
\newcommand{\Prk}{\mathrm{Prk}}
\newcommand{\Pc}{\mathrm{PrkCol}}
\newcommand{\Col}{\mathrm{Coll}}
\newcommand{\CU}{\mathsf{CU}}
\newcommand{\ult}{\operatorname{Ult}}
\newcommand{\stem}{\operatorname{stem}}
\newcommand{\pstem}{\operatorname{pstem}}
\newcommand{\CSR}{\mathsf{CSR}}
\newcommand{\SR}{{\sf SR}}
\newcommand{\AP}{{\sf AP}}
\newcommand{\zfc}{\sf ZFC}
\newcommand{\gch}{\sf GCH}
\newcommand{\TP}{\sf TP}
\begin{document}

\title{Club Stationary Reflection and other Combinatorial Principles at $\aleph_{\omega+2}$}

\date{\today}

\author{Thomas Gilton}\address[Gilton]{University of Pittsburgh
Department of Mathematics. The Dietrich School of 
Arts and Sciences, 301 Thackeray Hall,
Pittsburgh, PA, 15260 United States
} \email{tdg25@pitt.edu} \urladdr{https://sites.pitt.edu/~tdg25/}

\author{{\v S}{\'a}rka Stejskalov{\'a}}\address[Stejskalov{\'a}]{
Charles University, Department of Logic,
Celetn{\' a} 20, Prague~1, 
116 42, Czech Republic
}
\email{sarka.stejskalova@ff.cuni.cz}
\urladdr{logika.ff.cuni.cz/sarka}

\thanks{{\v S}.~Stejskalov{\'a} was supported by FWF/GA{\v C}R grant \emph{Compactness principles and combinatorics} (19-29633L).}

\begin{abstract}
    In this paper we continue the study in \cite{GLS:8fold} of compactness and incompactness principles at double successors, focusing here on the case of double successors of singulars of countable cofinality. We obtain models which satisfy the tree property and club stationary reflection at these double successors. Moreover, we can additionally obtain either approachability or its failure. We also show how to obtain our results on $\aleph_{\om+2}$ by incorporating collapses; particularly relevant for these circumstances is a new indestructibility theorem of ours showing that posets satisfying certain linked assumptions preserve club stationary reflection.
\end{abstract}

\keywords{Club stationary reflection, club adding, Prikry forcing, Tree Property, Mitchell forcing, Compactness}

\subjclass[2020]{Primary: 03E05, 03E35, 03E55, 03E65}

\maketitle

\tableofcontents

\section{Introduction}

A long-standing and fruitful line of research in contemporary set theory is concerned with the tension between two classes of combinatorial principles (see, for example, \cite{ScalesSquares}, \cite{FontHayut}, \cite{LambieHansonHayut}, \cite{8fold}, \cite{BNG}). The first of these classes we refer to as compactness and reflection principles, and examples of principles in this class are the tree property and stationary reflection. In tension with this first class, we have the class of incompactness and non-reflection principles which are typified by square principles and their consequences. Since principles in the latter class often imply the failure of principles in the other, it is of interest when principles from these classes are jointly consistent.

A central character in this line of research is the so-called square principle $\Box_\mu$, for a cardinal $\mu$, which Jensen (\cite{JENfine}) first showed is consistent with $\zfc$. While we won't need the definition here, $\Box_\mu$ intuitively asserts the existence of a coherent system of clubs which singularize all ordinals in the interval $[\mu,\mu^+)$. A $\Box_\mu$ sequence is frequently used to continue through limit stages of inductive constructions of length $\mu^+$. 

A $\Box_\mu$ sequence has a deep impact on the combinatorics of $\mu^+$. For instance, it implies that the approachability property holds at $\mu^+$, but that the tree property and the stationary reflection property both fail at $\mu^+$ (these principles will all be defined later). In \cite{8fold}, Cummings, Friedman, Magidor, Rinot, and Sinapova  showed that these three consequences of $\Box_\mu$ are mutually independent in the sense that all eight of their Boolean combinations are consistent, usually from large cardinals. They obtain their results in the case when $\mu$ is itself a successor, say $\kappa^+$ (and thus their results are on the double successor of $\kappa$), both when $\ka$ is regular and when $\ka$ is singular of cofinality $\om$. Moreover, they showed that they can make $\ka$ become $\aleph_\om$, the first singular cardinal.

Recent results in this vein (\cite{Gilton-thesis}, \cite{GK:8fold}) have built upon the work of \cite{8fold}. In addition, in \cite{GLS:8fold}, the authors (together with Levine) continued the work in \cite{8fold}, showing, in particular, that one can obtain any Boolean combination of approachability and the tree property at $\ka^{++}$ together with club stationary reflection at $\ka^{++}$. These results were obtained in the case when $\ka$ is regular. (Note that the Boolean combination of club stationary reflection, the approachability property, and the failure of the tree property already holds in Magidor's model \cite{M:sr}.)

In this paper, we take the next step and show how to obtain, when $\ka$ is singular of cofinality $\om$, a variety of combinatorics at $\ka^{++}$ simultaneously with club stationary reflection at $\ka^{++}$. We also show how to make $\ka$ become $\aleph_\om$. More precisely, we prove the following theorem (the notation will be defined later):

\begin{theorem}\label{thm:WeaklyCompactCases}
Assuming the existence of a supercompact cardinal $\ka$ and a weakly compact cardinal $\lam>\ka$, there are models witnessing each of the following:
\begin{enumerate}
    \item $\cf(\ka)=\om$, $\lam=\ka^{++}$, and $\CSR(\lam)\we\AP(\lam)\we\TP(\lam)$;
    \item $\cf(\ka)=\om$, $\lam=\ka^{++}$, and $\CSR(\lam)\we\neg\AP(\lam)\we\TP(\lam)$.
\end{enumerate}
Moreover, we can obtain (1) and (2) with $\ka=\aleph_\om$.
\end{theorem}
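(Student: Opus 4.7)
The plan is to build the model in three stages: a Mitchell-style collapse that turns the weakly compact $\lam>\ka$ into $\ka^{++}$ while retaining enough of the large cardinal structure of $\ka$; an adjustment following the techniques of \cite{GLS:8fold} that produces $\CSR(\lam)$ together with the desired Boolean combination of $\AP(\lam)$ and $\TP(\lam)$ while $\ka$ is still regular; and a Prikry-type forcing that singularizes $\ka$ to cofinality $\om$, with interleaved L\'evy collapses for the \emph{moreover} clause so that $\ka$ becomes $\aleph_\om$ and $\lam$ becomes $\aleph_{\om+2}$.

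First I would Laver-prepare the supercompact $\ka$ so that, after a suitable Mitchell collapse of $\lam$ to $\ka^{++}$, the cardinal $\ka$ still carries a normal measure (or normal measure sequence) suitable for the later Prikry construction, while $\TP(\lam)$ holds in the extension via the weak compactness of $\lam$. Inside the variant of this construction developed in \cite{GLS:8fold}, I would additionally arrange $\CSR(\lam)$ together with $\AP(\lam)$ for case~(1) or $\neg\AP(\lam)$ for case~(2); the approachability status at $\lam$ is decided at this stage by the shape of the Mitchell iteration.

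Second, to singularize $\ka$ I would apply either a standard supercompact Prikry forcing, or, for the \emph{moreover} clause, a Prikry forcing with interleaved Magidor-style collapses, based on a suitable measure preserved from the first stage. Preservation of $\TP(\lam)$ and of the chosen value of $\AP(\lam)$ under this forcing is by now fairly routine: the Prikry property together with the closure properties of the term-forcings for the collapses blocks new $\lam$-Aronszajn trees and keeps the approachability diagnostic at $\lam$ unchanged.

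The main obstacle is preservation of $\CSR(\lam)$ through this Prikry-with-collapses phase, and this is exactly where the new indestructibility theorem announced in the abstract is invoked. I would show that the Prikry-with-collapses poset, or a suitable projection of it, satisfies the linkedness hypothesis of that theorem, which then delivers $\CSR(\lam)$ in the final model. The hardest part is the simultaneous coordination of the three requirements on this poset---it must push $\ka$ down to $\aleph_\om$ and $\lam$ to $\aleph_{\om+2}$, satisfy the linkedness hypothesis so that $\CSR(\lam)$ survives, and be neutral with respect to the chosen value of $\AP(\lam)$---and verifying that a single Magidor-style collapse design meets all three conditions is where I expect the technical heart of the argument to lie.
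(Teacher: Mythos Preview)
Your high-level outline is broadly correct, but you have the distribution of difficulty exactly backwards, and in doing so you gloss over what is actually the technical heart of the paper.

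Preservation of $\CSR(\lam)$ under the Prikry forcing (with or without collapses) is the \emph{easy} part. The new indestructibility theorem (Theorem~\ref{th:CSRPreserve}) has a short, self-contained proof: any $\ka^+$-c.c., $\ka^+$-linked poset preserves $\CSR(\ka^{++})$, and both $\Prk(U)$ and $\Pc(U,G^g)$ are $\ka$-linked (there are only $\ka$ many stems, and any two conditions with the same stem are compatible). So once the theorem is stated, verifying that the Prikry posets satisfy its hypothesis is immediate; there is no delicate ``coordination'' to perform here.

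By contrast, your claim that preservation of $\TP(\lam)$ is ``by now fairly routine'' is where the real gap lies. The Prikry property and term-forcing closure do not by themselves ``block new $\lam$-Aronszajn trees''; one must show that \emph{every} $\lam$-tree in the final model has a cofinal branch. The argument lifts a weakly compact embedding $k:M\to N$ with $k(\mf{l})(\lam)=\dot{\ps}_\al$ (this is why the Mitchell forcing is built relative to a weakly compact Laver diamond, obtained via fast function forcing), exhibits a complete embedding of the full poset into its $k$-image, and then proves a branch lemma showing the quotient cannot add a branch. In the collapse case this quotient analysis (Lemma~\ref{lemma:qa} and what follows) is genuinely intricate: one must track when conditions are forced out of the quotient $\bb{S}$, show that term-ordering extensions stay in $\bb{S}$, establish the Prikry lemma for the collapsed version $\pi(\Pc(U,G^g))$ after the club adding, and run a splitting-tree argument that interleaves Prikry antichains with $\bb{T}$-descending sequences. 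Section~\ref{sec:wc} is by far the longest in the paper for this reason.

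You also underestimate the work in obtaining a guiding generic for $\Pc(U,G^g)$ after the Mitchell-plus-club-adding forcing has made $2^\ka=\ka^{++}$: the ultrapower $M$ is no longer sufficiently closed to build it directly, and Section~\ref{sec:ggs} carries out a preparatory Easton-support iteration (with auxiliary Cohen forcing to raise the critical point of the factor map) and a separate argument that the club adding preserves the guiding generic.
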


Our results combine a variety of techniques such as Mitchell-style forcings, club adding iterations, guessing functions from weakly compact cardinals, Woodin's fast function forcing and Prikry-type forcings (both with and without collapses). In more detail, our paper is organized as follows:

In Section \ref{sec:prelims}, we review the preliminaries. We record the definitions of almost all of the forcings which we use throughout the paper, and we review their central features. 

In Section \ref{sec:indestructibility}, we state a number of known indestructibility results which we will use liberally in our paper. We also include a new indestructibility result showing that a $\ka^+$-c.c., $\ka^+$-linked poset (in particular, a poset which is $\ka$-linked) preserves club stationary reflection at $\ka^{++}$. This theorem simplifies and generalizes Theorem 4.12(iii) of \cite{HS:u} and shows that a variety of Prikry-type posets (including many that add collapses) preserve club stationary reflection at $\ka^{++}$. We also review the types of guessing functions from weakly compact cardinals which we use throughout the paper. In particular, we tweak Hamkins' construction (\cite{Hamkins:LotteryPreparation} and \cite{Hamkins:LaverDiamond}) for obtaining weakly compact Laver diamonds in order to suit some of our later purposes. Finally, we will review the results from \cite{GLS:8fold} which we will need later in the paper.

In Section \ref{sec:ggs}, we show how to obtain guiding generics which are appropriate for the collapses in the Prikry forcing. There are a variety of different cases in which we need these guiding generics, and we build off of the work of \cite{GS:sch} and \cite{SU:tree} to show how to tailor the preparatory forcings in each of the desired circumstances. We then show that these guiding generics are preserved by forcing with the club adding posets of interest.

Section \ref{sec:wc} includes the proofs of Theorem \ref{thm:WeaklyCompactCases} which assume that $\lam$ is weakly compact. Section \ref{sec:wc} is by far the longest section in the paper, given the complexity of the arguments and given our commitment to include quite a bit of detail.

\textbf{Acknowledgements}: We would like to thank James Cummings for a number of helpful conversations about the material in this paper. We would also like to thank Hannes Jakob for pointing out an error in the previous version.

\section{Preliminaries}\label{sec:prelims}

\subsection{Combinatorial Principles}

In this subsection, we review the definitions of the combinatorial principles which we are studying in this paper. We begin with the varieties of stationary reflection of interest. Recall that if $S\seq\ka^+$ is stationary and $\al\in\ka^+\cap\cof(>\om)$, then $\al$ is a \emph{reflection point} of $S$ if $S\cap\al$ is stationary in $\al$; we also say in this case that $S$ \emph{reflects at $\al$}. We say that $\ka^+$ satisfies the \emph{stationary reflection property}, denoted $\SR(\ka^+)$, if every stationary subset of $\ka^+\cap\cof(<\ka)$ reflects at an ordinal $\ga<\ka^+$ of cofinality $\ka$.

One obtains stronger reflection principles by requiring that stationary sets reflect simultaneously. We are interested in a yet further strengthening which intuitively asserts that every stationary set which could reflect reflects \emph{almost everywhere}. 

\begin{definition}
We say that $\ka^+$ satisfies the \emph{club stationary reflection property}, denoted $\CSR(\ka^+)$, if for every stationary $S\seq\ka^+\cap\cof(<\ka)$, there is a club $C\seq\ka^+$ so that $S$ reflects at all $\ga\in C\cap\cof(\ka)$.
\end{definition}

\begin{remark}
    It is worth noting that there is no analogue to $\CSR(\mu^+)$ when $\mu$ is singular, by a Solovay splitting argument:\footnote{James Cummings informed us of this fact and attributed it to Menachem Magidor.}
\end{remark}

In addition to stationary reflection, we are interested in the approachability property, which can be thought of as a weak version of the square principle. Shelah originally introduced this ideal to clarify questions about preserving stationary sets (see \cite{ShelahApproachability}).

\begin{definition}
Suppose that $\ka$ is regular. Let $\vec{a}=\la a_\al:\al<\ka^+\ra$ be a sequence of subsets of $\ka^+$ each of size less than $\ka$. An ordinal $\ga<\ka^+$ is said to be \emph{approachable} with respect to $\vec{a}$ if there is an unbounded subset $A\seq\ga$ of order type $\cf(\ga)$ satisfying that for all $\be<\ga$, there is an $\al<\ga$ so that $A\cap\be=a_\al$.
\end{definition}

Thus a point $\ga$ is approachable with respect to $\vec{a}$ if there is some unbounded $A\seq\ga$ of minimal order type all of whose initial segments are enumerated by $\vec{a}$ before stage $\ga$. There is a natural ideal, called the \emph{approachability ideal}, associated to this:

\begin{definition}
The approachability ideal $I[\ka^+]$ consists of all $S\seq\ka^+$ for which there is a sequence $\vec{a}$ and a club $C\seq\ka^+$ so that every $\ga\in S\cap C$ is approachable with respect to $\vec{a}$.

We say that the \emph{approachability property} holds at $\ka^+$, denoted $\AP(\ka^+)$, if $\ka^+\in I[\ka^+]$.
\end{definition}

Finally, we are interested in properties of trees. Recall that a \emph{$\ka^+$-tree} is a tree $T$ of height $\ka^+$ all of whose levels have size $\leq\ka$.

\begin{definition} $\ka^+$ satisfies the \emph{tree property}, denoted $\TP(\ka^+)$, if every $\ka^+$-tree has a cofinal branch. A $\ka^+$-tree without a cofinal branch is called a \emph{$\ka^+$-Aronszajn tree.}
\end{definition}

\subsection{Cohen, Collapse, Club Adding, and Complete Embeddings}

In this subsection, we review the definitions of some of the standard posets which we use throughout the paper and some facts about complete (and dense) embeddings.

\begin{definition} Suppose that $\ka$ is regular and $X$ a set of ordinals. $\Add(\ka,X)$ is the poset consisting of conditions $p$ which are partial functions from $\ka\times X$ to $2$  with $|\dom(p)|<\ka$. The ordering is by reverse inclusion.
\end{definition}

Usually, we will be looking at Cohen forcing of the form $\Add(\ka,\lam)$ where $\lam$ is a regular cardinal.

\begin{definition} Suppose that $\ka<\lam$ are regular cardinals. $\Col(\ka,<\lam)$ is the poset consisting of conditions $p$ which are partial functions from $\ka\times\lam$ into $\lam$, of size $<\ka$, so that if $(\al,\mu)\in\dom(p)$, then $p(\al,\mu)<\mu$. The ordering is reverse inclusion.
\end{definition}

Next, we define the club adding iterations which we will use throughout the paper, and we state a few useful facts. Here we are following the exposition from \cite{GLS:8fold}, Section 2.4, which in turn followed Cummings' Handbook article \cite{CummingsHandbook}.

\begin{definition} Suppose that $\ka$ is regular and $X\seq\ka$ is stationary. The \emph{poset to add a club subset of $X$}, denoted $\CU(X)$, consists of closed, bounded $c\seq X$. The ordering is by end-extension: $d\leq c$ iff $c=d\cap (\max(c)+1)$.
\end{definition}

\begin{definition}\label{def:SCAI} Suppose that $\ka$ is regular. We say that an iteration $\la \ps_\alpha,\dot{\Q}_\alpha:\alpha < \ka^+\ra$ with  supports of size $<\ka$ is a \emph{standard club adding iteration of length} $\ka^+$ if for all $\alpha<\ka^+$, there is a $\ps_\alpha$-name $\dot{X}_\alpha$ for a stationary subset of $\ka$ so that $\ps_\alpha \Vdash \dot \Q_\alpha = \CU(\dot X_\alpha), \dot X_\alpha \subset \ka$.
\end{definition}

The following item is a standard fact about the chain condition of such iterations; the item thereafter states a useful consequence which we will use throughout the paper.

\begin{fact}\label{iterationchaincondition} 
Suppose that $\ka$ is regular, that $\ka^{<\ka} = \ka$, and that $\ps_{\ka^+} := \la \ps_\alpha,\dot \Q_\alpha:\alpha < \ka^+\ra$ is a standard club adding iteration of length $\ka^+$. Then $\ps_{\ka^+}$ has the $\ka^+$-chain condition.
\end{fact}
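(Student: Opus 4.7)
The plan is a standard $\Delta$-system argument, with the key input being a size bound $|\ps_\gamma| \le \ka$ for initial segments $\gamma < \ka^+$, together with a pigeonhole on restrictions.

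First I would establish, by induction on $\gamma < \ka^+$, that $|\ps_\gamma| \le \ka$. Each iterand $\dot{\Q}_\beta = \CU(\dot X_\beta)$ is forced to have size $\le \ka$, since its conditions are closed bounded subsets of $\ka$ (hence sequences of length $<\ka$) and $\ka^{<\ka} = \ka$. At limit stages, one counts $<\ka$-supported partial functions taking values in small nice names, and the hypothesis $\ka^{<\ka} = \ka$ controls the resulting cardinality.

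To prove the $\ka^+$-c.c. of $\ps_{\ka^+}$, I take any collection $\{p_i : i < \ka^+\}$ of conditions. Each $p_i$ has support $s_i \subseteq \ka^+$ with $|s_i| < \ka$. Using $\ka^{<\ka} = \ka$, the $\Delta$-system lemma yields $I \subseteq \ka^+$ of size $\ka^+$ and a root $R$ with $|R| < \ka$ such that $s_i \cap s_j = R$ for distinct $i, j \in I$. Since $|R| < \ka = \cf(\ka^+)$ and $R \subseteq \ka^+$, we have $\sup R < \ka^+$, so I fix $\gamma < \ka^+$ with $R \subseteq \gamma$. By the size bound, $|\ps_\gamma| \le \ka$, so among the $\ka^+$ restrictions $\{p_i \res \gamma : i \in I\}$ some two must coincide; say $p_i \res \gamma = p_j \res \gamma$ for $i \ne j$ in $I$.

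Now I would combine $p_i$ and $p_j$ into a single condition. Because $s_i \cap s_j = R \subseteq \gamma$, the supports $s_i \setminus \gamma$ and $s_j \setminus \gamma$ are disjoint. Define $q \in \ps_{\ka^+}$ by $q \res \gamma := p_i \res \gamma$, $q(\beta) := p_i(\beta)$ for $\beta \in s_i \setminus \gamma$, $q(\beta) := p_j(\beta)$ for $\beta \in s_j \setminus \gamma$, and trivial elsewhere. Then $q$ extends both $p_i$ and $p_j$, so the two are compatible. The main delicate point is the initial size computation $|\ps_\gamma| \le \ka$; the $\Delta$-system step and the amalgamation are routine once that bound is in place.
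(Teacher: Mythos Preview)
The paper records this as a Fact without proof, citing it as standard (the surrounding discussion points to Cummings' Handbook article). Your $\Delta$-system argument is exactly the standard one, so there is no meaningful divergence to compare.

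The only point worth sharpening---which you already flag as ``the main delicate point''---is the bound $|\ps_\gamma|\le\ka$. Counting arbitrary $\ps_\al$-names for elements of the iterand gives $2^\ka$ rather than $\ka$, so what one actually proves is that the conditions whose value at every coordinate is (the check of) a ground-model closed bounded subset of $\ka$ form a dense set, and it is that dense set that has size $\le\ka$ for each $\gamma<\ka^+$. Establishing that density uses that the iteration is $<\ka$-strategically closed, which holds in all the applications in the paper (where each $\dot X_\al$ is fat enough, e.g.\ contains a club of suitable cofinality) but is not literally implied by the bare hypothesis $\dot X_\al\subset\ka$. With that caveat understood, your pigeonhole on restrictions and the amalgamation of $p_i$ and $p_j$ over disjoint tails are exactly right.
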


\begin{fact}
\label{nicenamestrick} If $\ps_{\ka^+}$ is a standard club adding iteration of length $\ka^+$ and $f: \ka \to\operatorname{Ord}$ is a member of $V[\ps_{\ka^+}]$, then there is some $\alpha<\ka^+$ such that $f \in V[\ps_\alpha]$.
\end{fact}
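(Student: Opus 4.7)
The plan is to give a standard nice-name argument, leveraging the $\ka^+$-chain condition from Fact \ref{iterationchaincondition} together with the support bound on individual conditions coming from Definition \ref{def:SCAI}. First I would fix a $\ps_{\ka^+}$-name $\dot f$ for $f$, and for each $\be<\ka$ choose a $\ps_{\ka^+}$-name $\dot{f}(\be)$ for its $\be$-th value. Since $\ps_{\ka^+}$ has the $\ka^+$-c.c., I can replace $\dot f$ with a nice name: for each $\be<\ka$, there is a maximal antichain $A_\be\subseteq\ps_{\ka^+}$ of size at most $\ka$ each of whose elements $p$ decides $\dot{f}(\be)$ to be some specific ordinal $\ga_{\be,p}$.

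Next I would track supports. Every condition $p\in\ps_{\ka^+}$ has support $\sigma(p)\subseteq\ka^+$ of size $<\ka$, and in particular $\sigma(p)$ is bounded in $\ka^+$. So set
\[
S \;:=\; \bigcup_{\be<\ka}\,\bigcup_{p\in A_\be}\sigma(p)\;\subseteq\;\ka^+.
\]
Since there are $\ka$ antichains, each of size $\le\ka$, and each contributes a support of size $<\ka$, we have $|S|\le\ka\cdot\ka\cdot\ka=\ka$. As $\ka^+$ is regular, $S$ is bounded in $\ka^+$, say by some $\al^*<\ka^+$.

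Having fixed $\al^*$, the nice name, viewed as a set of triples $(\check\be,\check{\ga}_{\be,p},p)$ for $\be<\ka$ and $p\in A_\be$, consists entirely of conditions whose supports lie below $\al^*$, hence is naturally a $\ps_{\al^*}$-name. Its interpretation in $V[\ps_{\ka^+}]$ agrees with $f$ but is computed already from $G\cap\ps_{\al^*}$, giving $f\in V[\ps_{\al^*}]$. The only point requiring care is keeping track of the distinction between the full-support filter $G$ and its restriction to $\ps_{\al^*}$, but this is handled by the standard complete-embedding machinery reviewed in the preceding subsection, so no real obstacle arises.
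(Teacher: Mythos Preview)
Your argument is correct and is the standard nice-name argument one would expect here. Note that the paper does not actually prove this statement: it is recorded as a \emph{Fact} without proof, being a well-known consequence of the $\ka^+$-chain condition (Fact~\ref{iterationchaincondition}) together with the $<\ka$-support. So there is no paper proof to compare against; your write-up simply spells out the folklore justification. The one small point worth making explicit is that each $A_\be$, being a maximal antichain of $\ps_{\ka^+}$ consisting of conditions in $\ps_{\al^*}$, is automatically a maximal antichain of $\ps_{\al^*}$ (since compatibility of conditions in $\ps_{\al^*}$ is the same computed in $\ps_{\al^*}$ or in $\ps_{\ka^+}$), which is what makes the nice name genuinely a $\ps_{\al^*}$-name.
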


Now we recall the definition of two types of embeddings between posets.

\begin{definition}\label{def:ce}
Given forcing posets $\ps$ and $\Q$, a map $i:\Q\lra\ps$ is a \emph{complete embedding} if
\begin{enumerate}
    \item $i(1_\Q)=1_\ps$;
    \item for all $q_1,q_2\in\Q$, $[q_2\leq_\Q q_1\lra i(q_2)\leq_\ps i(q_1)]$;
    \item for all $q_1,q_2\in\Q$, $[q_1\perp_\Q q_2\lra i(q_1)\perp_\ps i(q_2)]$;\footnote{Note that the converse holds by an application of (2).}
    \item for all $A\seq\Q$, if $A$ is a maximal antichain in $\Q$, then $i[A]$ is a maximal antichain in $\ps$.\footnote{This also has a characterization in terms of ``reductions" of conditions. See Definition 3.3.71 in \cite{KunenBible}.}
\end{enumerate}
$i$ is said to be a \emph{dense embedding} if (1), (2), and (3) hold, along with
\begin{enumerate}
    \item[(5)] $i[\Q]$ is dense in $\ps$.
\end{enumerate}
\end{definition}

Recall that two forcing posets $\ps$ and $\Q$ are \emph{forcing equivalent} if for each $V$-generic $G$ over $\ps$, there is a $V$-generic $H$ over $\Q$ so that $V[G]=V[H]$, and vice versa. If $i:\Q\lra\ps$ is a complete embedding, then forcing with $\ps$ is equivalent to forcing with $\Q$, say to obtain a generic $H$, and then forcing over $V[H]$ with $\ps/H$, which consists of all $p\in\ps$ which are compatible with every condition in $i[H]$, with the same order as in $\ps$. The following lemma contains two standard facts about this situation:

\begin{fact}\label{fact:quotientStandard}
Suppose that $i:\Q\lra\ps$ is a complete embedding and that $H$ is $V$-generic over $\Q$.
\begin{enumerate}
    \item If $D$ is a dense subset of $\ps$ in $V$, then $D$ remains dense in $\ps/H$.
    \item If $p\in\ps/H$ and $q\in H$, then there is a $p^*\in\ps/H$ (not just in $\ps$) which extends $p$ and $i(q)$.
\end{enumerate}
\end{fact}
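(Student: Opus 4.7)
The plan is to derive both items from the reduction characterization of complete embeddings (a consequence of clause (4) of Definition \ref{def:ce}): for every $p\in\ps$ there is a \emph{reduction} $q\in\Q$ of $p$, meaning that $i(q')$ is compatible with $p$ for every $q'\leq q$. Consequently, for each $p\in\ps$, the set of $q\in\Q$ that are either reductions of $p$ or satisfy $i(q)\perp p$ is dense in $\Q$, and a reduction of $p$ can be chosen below any $q_0\in\Q$ with $i(q_0)$ compatible with $p$ (reductions are closed under extension, and any reduction of $p$ is compatible with such a $q_0$ by clause (3)). Item (2) will follow cleanly from (1) applied to a tailored dense set, so (1) does the real work.

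For (1), fix $p\in\ps/H$ and a dense $D\seq\ps$ with $D\in V$. Working in $V$, I would define
\[
E := \{q\in\Q : i(q)\perp p\}\cup\{q\in\Q : q \text{ is a reduction of some } p'\in D \text{ with } p'\leq p\}
\]
and check that $E$ is dense in $\Q$: given $q_0$ with $i(q_0)$ compatible with $p$, pick $p_0\leq p,i(q_0)$, use density of $D$ to get $p'\in D$ with $p'\leq p_0$, and then choose a reduction of $p'$ below $q_0$. By genericity pick $q\in H\cap E$. Since $p\in\ps/H$ forces $i(q)$ to be compatible with $p$ whenever $q\in H$, the first alternative is excluded, so $q$ is a reduction of some $p'\in D$ with $p'\leq p$. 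A short directedness argument in $H$ then shows $p'\in\ps/H$: for any $q''\in H$, take $q'''\in H$ below both $q$ and $q''$; the reduction property gives $i(q''')$ compatible with $p'$, and since $i(q''')\leq i(q'')$, so is $i(q'')$.

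For (2), let $p\in\ps/H$ and $q\in H$. The set $D := \{p'\in\ps : p'\leq i(q) \text{ or } p'\perp i(q)\}$ is trivially dense in $\ps$. By (1), $D$ remains dense in $\ps/H$, so some $p'\in D\cap\ps/H$ satisfies $p'\leq p$. Because $q\in H$, every member of $\ps/H$ is compatible with $i(q)$, ruling out the alternative $p'\perp i(q)$; hence $p'\leq i(q)$, and $p^*:=p'$ witnesses the conclusion.

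The main obstacle is the step in (1) showing that the witness $p'$ actually belongs to $\ps/H$ rather than merely to $\ps$. Building $E$ from reductions rather than from $D$ directly is precisely what enables this: the reduction property combined with directedness of $H$ delivers compatibility of $p'$ with all of $i[H]$ in one stroke. Everything else is routine manipulation of the complete embedding axioms and the definition of quotient forcing.
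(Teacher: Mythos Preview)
The paper states this as a standard fact without proof, so there is no argument to compare against; your overall strategy (prove (1) via reductions, then derive (2) from (1) applied to a tailored dense set) is correct and is the natural one.

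One small imprecision worth flagging: your parenthetical assertion that \emph{any} reduction of $p$ is compatible with a $q_0$ satisfying ``$i(q_0)$ compatible with $p$'' is false in general (small finite posets give counterexamples). What is true is that \emph{some} reduction of $p$ lies below such a $q_0$: take $r\leq i(q_0),p$, pick any reduction $q_1$ of $r$, and note that $i(q_1)$ is compatible with $r\leq i(q_0)$, so $q_1$ is compatible with $q_0$ by clause (3), and any common extension of $q_1,q_0$ is then a reduction of $p$ below $q_0$. This does not affect your actual application, since there you have the stronger hypothesis $p'\leq i(q_0)$, and under that hypothesis every reduction $q_1$ of $p'$ does satisfy $i(q_1)$ compatible with $p'\leq i(q_0)$, hence $q_1$ compatible with $q_0$. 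So the density of $E$ holds as you claim, and the remainder of the argument for both (1) and (2) is correct.
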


Dense embeddings are even stronger, as the following fact shows:

\begin{fact}
If $i:\Q\lra\ps$ is a dense embedding, then $\Q$ and $\ps$ are forcing equivalent.
\end{fact}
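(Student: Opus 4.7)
The plan is to prove forcing equivalence by showing that generics transfer in both directions. First, given a $V$-generic $G\seq\ps$, I would define $H:=i^{-1}[G]$, and given a $V$-generic $H\seq\Q$, I would define $G:=\{p\in\ps : \exists q\in H,\ i(q)\leq_\ps p\}$. Once both constructions are shown to give generics of the other poset with $V[G]=V[H]$, equivalence follows.

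For the forward direction (starting with $G$), the filter conditions on $H$ are handled using (1) and (2), with the slightly delicate point being downward directedness: if $q_1,q_2\in H$, then $i(q_1),i(q_2)\in G$ are compatible in $\ps$, so by (5) we can find $q'\in\Q$ with $i(q')$ below a common extension; $i(q')$ is then compatible with $i(q_1)$ and $i(q_2)$, and by (3) in contrapositive $q'$ is compatible with each $q_j$, from which one extracts a common extension in $H$. For genericity, given a dense $D\seq\Q$, the set $i[D]$ is dense in $\ps$: any $p\in\ps$ has some $i(q)\leq p$ by (5), and then some $q'\leq q$ in $D$ gives $i(q')\leq p$ with $i(q')\in i[D]$. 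To check $G\in V[H]$, I would show that $G$ equals the upward closure of $i[H]$; the nontrivial containment requires noting that $D_p:=\{q\in\Q : i(q)\leq p \text{ or } i(q)\perp p\}$ is dense (using (5) and the same compatibility reflection from (3)), so $H$ meets it, and any witness cannot be incompatible with $p\in G$.

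For the reverse direction (starting with $H$), the set $G$ is clearly upward closed, and two elements $p_1,p_2\in G$ witnessed by $q_1,q_2\in H$ share the common extension $i(q)$ for any $q\in H$ below both $q_j$'s. For genericity, given an open dense $D\seq\ps$ in $V$, I would show $D':=\{q\in\Q : i(q)\in D\}$ is dense in $\Q$: given $q_0$, pick $p\leq i(q_0)$ with $p\in D$, then $q^*\in\Q$ with $i(q^*)\leq p$ by (5), and a common extension $q'\leq q^*,q_0$ (using compatibility reflection as above); since $D$ is open, $i(q')\in D$, so $q'\in D'$. Then any $q\in H\cap D'$ gives $i(q)\in G\cap D$.

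The main obstacle throughout is that a dense embedding only preserves incompatibility (clause (3)), not the order in the reverse direction: the fact that $i(q_1)\leq_\ps i(q_2)$ does not give $q_1\leq_\Q q_2$, only that they are compatible in $\Q$. Every step that tries to pull an inequality in $\ps$ back through $i$ must be routed through this compatibility reflection together with the density of $i[\Q]$, and that is the one idea that has to be invoked repeatedly.
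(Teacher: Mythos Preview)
The paper does not prove this fact; it is stated without proof as a standard result (the reference would be Kunen's text, which the paper cites). Your argument follows the standard proof and is essentially correct.

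One small point: in the forward direction, your argument for directedness of $H$ has a gap at ``from which one extracts a common extension in $H$.'' Knowing that $q'$ is compatible with each $q_j$ separately does not by itself produce a single element of $H$ below both $q_1$ and $q_2$. The clean fix is to first establish that $H$ meets every dense subset of $\Q$ (your genericity argument, which is correct and does not rely on directedness), and then apply this to the dense set
\[
\{q\in\Q:q\le q_1\text{ and }q\le q_2\}\cup\{q\in\Q:q\perp q_1\}\cup\{q\in\Q:q\perp q_2\};
\]
any element of $H$ in this set must lie below both $q_1$ and $q_2$, since the incompatibility alternatives would, via (3), contradict $i(q),i(q_1),i(q_2)\in G$. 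You also do not explicitly verify $H\in V[G]$ in the reverse direction; this follows by showing $H=i^{-1}[G]$, using for each $q$ with $i(q)\in G$ the dense set $\{r:r\le q\}\cup\{r:r\perp q\}$ together with (3) again.
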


The last item in this subsection is a version of Easton's lemma which holds for distributive forcings.

\begin{lemma}\label{lemma:distributiveEaston}
    Suppose that $\ps$ is $\ka$-c.c., that $\Q$ is $\ka$-distributive, and that $\Q$ forces that $\ps$ is $\ka$-c.c. Then $\ps$ forces that $\Q$ is $\ka$-distributive.
\end{lemma}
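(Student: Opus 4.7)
The plan is to show that $\Q$ remains $\ka$-distributive in $V^\ps$, equivalently that $\Q$ adds no new $<\ka$-sequence of ordinals over $V^\ps$. Since $\ps\times\Q$ and $\Q\times\ps$ are the same poset, this reduces to showing: for every $(\ps\times\Q)$-name $\dot f$ and every $(p,q)\in\ps\times\Q$ with $(p,q)\Vdash\dot f:\mu\to\operatorname{Ord}$ for some $\mu<\ka$, there exist $q^*\leq q$ and a $\ps$-name $\dot g$ with $(p,q^*)\Vdash\dot f=\dot g$.

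Working in $V$, I would proceed in three steps. First, for each $i<\mu$, define
\[
D_i \;=\; \lb q'\leq q : \exists \text{ a }\ps\text{-name } \dot g_i,\; (p,q')\Vdash \dot f(i)=\dot g_i\rb
\]
and show $D_i$ is a dense open subset of $\Q$ below $q$ (and lies in $V$). Given $q''\leq q$, I would use the $\ka$-c.c.\ of $\ps$ to produce a maximal antichain $A=\{r_\al:\al<\delta\}$ below $p$ in $\ps$ with $\delta<\ka$, together with conditions $\tilde q_\al\leq q''$ in $\Q$ and ordinals $\eta_\al$ satisfying $(r_\al,\tilde q_\al)\Vdash \dot f(i)=\check\eta_\al$. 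One then extracts a common strengthening $q'''\leq q''$ with $q'''\leq \tilde q_\al$ for all $\al<\delta$; with such a $q'''$ in hand, the $\ps$-name $\dot g_i:=\{(\check\eta_\al,r_\al):\al<\delta\}$ satisfies $(p,q''')\Vdash \dot f(i)=\dot g_i$, because by maximality of $A$, in any generic $G\times H$ extending $(p,q''')$ there is a unique $r_\al\in A\cap G$ and $\tilde q_\al\in H$, whence $(r_\al,\tilde q_\al)\in G\times H$ pins down $\dot f(i)=\eta_\al=\dot g_i^G$.

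Second, since each $D_i$ lies in $V$ and there are only $\mu<\ka$ of them, the hypothesis that $\Q$ is $\ka$-distributive in $V$ gives that $\bigcap_{i<\mu}D_i$ is dense below $q$. Third, picking any $q^*\leq q$ in this intersection yields $(p,q^*)\Vdash \dot f(i)=\dot g_i$ simultaneously for every $i<\mu$, so the $\ps$-name $\dot g:=\la \dot g_i:i<\mu\ra$ satisfies $(p,q^*)\Vdash \dot f=\dot g$, as required.

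The main obstacle is producing the common $\Q$-refinement $q'''$ of the $<\ka$-many conditions $\tilde q_\al$ in the first step. In the classical Easton's lemma, where $\Q$ is assumed $\ka$-closed, such a common lower bound is immediate; with only $\ka$-distributivity of $\Q$, this combining step requires more care -- for instance, by refining the antichain $A$ iteratively and absorbing incompatibilities at each stage via further applications of distributivity, or by passing to the regular open completion $\mathrm{r.o.}(\Q)$ and verifying that the infimum $\bigwedge_\al \tilde q_\al$ remains nonzero so that some $\Q$-condition lies below it.
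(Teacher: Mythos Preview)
The paper does not prove this; it is stated as a known variant of Easton's lemma and left at that.

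Your outline follows the classical Easton argument (where $\Q$ is assumed $\ka$-closed), and you have correctly isolated the crux: producing a common refinement $q'''\in\Q$ of the $<\ka$ conditions $\tilde q_\al$. This is precisely where $\ka$-distributivity falls short of $\ka$-closure, and neither of your proposed repairs works as written. The infimum $\bigwedge_\al\tilde q_\al$ in $\mathrm{r.o.}(\Q)$ may well be zero --- the $\tilde q_\al$ were chosen independently for pairwise-incompatible $r_\al\in\ps$, with no compatibility among them imposed on the $\Q$-side --- so there is no reason for a nonzero lower bound to exist. And ``iteratively refining the antichain and absorbing incompatibilities'' is not an argument until you say exactly what is iterated, what invariant is maintained, and why a single $\Q$-condition results.

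The gap you found is not a local technicality to be patched at that spot. Your sets
\[
D_i=\lb q'\le q:\text{there is a }\ps\text{-name }\dot g_i\text{ with }(p,q')\Vdash\dot f(i)=\dot g_i\rb
\]
coincide with $\lb q': p\Vdash_\ps ``q'\text{ decides }\dot f(i)"\rb$, and establishing their density \emph{is} essentially the content of the lemma; the maximal-antichain-plus-common-extension manoeuvre cannot do it. A correct proof has to be organized so that one applies the $\ka$-distributivity of $\Q$ once, to a family of $<\ka$ dense open subsets of $\Q$ that is specified in advance in $V$ (the $\ka$-c.c.\ of $\ps$ is what keeps this family small), and never asks for a lower bound of $\Q$-conditions manufactured along the way. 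In the paper's actual applications $\Q$ is a club-adding iteration, which enjoys closure-type properties well beyond bare $\ka$-distributivity; under such stronger hypotheses your common-refinement step would go through directly.
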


\subsection{Varieties of Mitchell Forcing} We next turn to defining the variants of Mitchell forcing (\cite{M:tree}) which we will use. Suppose that $\ka$ is regular and that $\lam>\ka$ is at least Mahlo. 

For our purposes, we use versions which incorporate a guessing function. The purpose of the guessing function is to ensure preservation of the tree property after some further forcing. In \cite{ABR:tree}, Abraham built Mitchell forcings with a third component to achieve this preservation, and in \cite{CummingsForeman}, Cummings and Foreman further developed this idea by using guessing functions from a supercompact Laver diamond (\cite{Laver:indestructible}). Similarly, in \cite{GLS:8fold}, the authors and Levine also used guessing functions, but ones from weakly compact and Mahlo cardinals. We will describe these guessing functions later in Section \ref{sec:ggs}. We need a bit of notation first:

\begin{notation}
Let $A\seq (\ka,\lam)$ denote the set of inaccessible cardinals between $\ka$ and $\lam$, and let $A^*$ denote $A\bsl\lim(A)$.
\end{notation}

We will define one version of Mitchell forcing which collapses at all stages in $A$ and another that collapses only at stages in $A^*$. The former will force that $\AP(\lam)$ holds. The latter, by contrast, will force the failure of $\AP(\lam)$, due to the more sparse collapsing.

\begin{definition}\label{def:MGuessAP} Let $\mf{l}:\lam\lra V_\lam$ be any function. We define the poset $\bb{M}_{\mf{l}}(\ka,\lam)\res\be$ by recursion on $\be\in A$, setting $\bb{M}_{\mf{l}}(\ka,\lam):=\bigcup_{\be\in A}\bb{M}_{\mf{l}}(\ka,\lam)\res\be$. Conditions in $\bb{M}_{\mf{l}}(\ka,\lam)\res\be$ are triples $(p,q,g)$ where
\begin{enumerate}
\item $p$ is a condition in $\Add(\ka,\be)$;
\item $q$ is a partial function with $\dom(q)\seq (\ka,\be)\cap A$ so that $|\dom(q)|\leq\ka$;
\item for all $\al\in\dom(q)$, $q(\al)$ is an $\Add(\ka,\al)$-name for a condition in $\dot{\Add}(\ka^+,1)$;
\item $g$ is a function with $\dom(g)\seq (\ka,\be)\cap A$ so that $|\dom(g)|\leq\ka$;
\item for all $\al\in\dom(g)$, if $\mf{l}(\al)$ is an $(\bb{M}_{\mf{l}}(\ka,\lam)\res\al)$-name for a $\ka^+$-directed\footnote{This will ensure that the whole poset is $\ka$-directed closed.} closed poset, then $g(\al)$ is an $(\bb{M}_{\mf{l}}(\ka,\lam)\res\al)$-name for a condition in $\mf{l}(\al)$. Otherwise $g(\al)$ is trivial.
\end{enumerate}
We say that $(p',q',g')\leq (p,q,g)$ if
\begin{enumerate}
\item $p'\leq p$;
\item $\dom(q)\seq\dom(q')$ and $\dom(g)\seq\dom(g')$;
\item for all $\al\in\dom(q)$, $p'\res\al\Vdash q'(\al)\leq q(\al)$;
\item for all $\al\in\dom(g)$, $(p',q',g')\res\al\Vdash_{\bb{M}_{\mf{l}}(\ka,\lam)\res\al} g'(\al)\leq g(\al).$
\end{enumerate}
\end{definition}

\begin{remark}
It is not absolutely necessary to use posets of the form $\bb{M}_{\mf{l}}(\ka,\lam)$ to obtain some of our later results, since an absorption lemma for Mitchell forcing (\cite{GLS:8fold}) can play a role similar to that of the guessing function in Definition \ref{def:MGuessAP}. However, it does make it a bit easier to work with the quotients (especially when we discuss branch lemmas in Section \ref{sec:wc}) if we do include the guessing function, and that is why we do so.
\end{remark}

We can also define a variation which uses a guessing function and forces the failure of approachability. The guessing function plays a more prominent role in these variations, since there is no natural (and useful) absorption lemma analogous to the one from \cite{GLS:8fold} with respect to $\bb{M}(\ka,\lam)$. 

\begin{definition}\label{def:MGuessNotAP}
Given $\mf{l}:\lam\lra V_\lam$, we define $\bb{M}^*_{\mf{l}}(\ka,\lam)$ as in Definition \ref{def:MGuessAP}, replacing (2) with
\begin{enumerate}
    \item[(2)$^*$] $q$ is a partial function with $\dom(q)\seq(\ka,\be)\cap A^*$ so that $|\dom(q)|\leq\ka$.
\end{enumerate}
The ordering is as in Definition \ref{def:MGuessAP}.
\end{definition}

When $\ka$ and $\lam$ are clear from context, we will often drop them from the notation. The forcings $\bb{M}^*_{\mf{l}}$ and $\bb{M}_{\mf{l}}$ have very different effects on approchability; this comes through in the use of $A$ or $A^*$ in defining the posets. The following fact is now standard (for example, see \cite{8fold}):

\begin{fact}
Suppose that $\ka$ is regular and $\lam>\ka$ is Mahlo. Let $\mf{l}:\lam\lra V_\lam$. Then $\bb{M}_{\mf{l}}(\ka,\lam)$ forces that $\AP(\lam)$ holds, and $\bb{M}^*_{\mf{l}}(\ka,\lam)$ forces that $\AP(\lam)$ fails.
\end{fact}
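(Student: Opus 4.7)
The plan is to factor each poset as a two-step iteration and then follow the classical Mitchell-style arguments from \cite{8fold}. Specifically, I would first establish a decomposition $\bb{M}_{\mf{l}}(\ka,\lam) \cong \Add(\ka,\lam) * \dot{\bb{T}}$, where $\dot{\bb{T}}$ assembles the collapse coordinates $q$ together with the guessing coordinates $g$. Since $\Add(\ka^+,1)$ is $\ka^+$-directed closed, and since $\mf{l}(\al)$ is required to name a $\ka^+$-directed closed poset whenever $g(\al)$ is nontrivial (clause (5) of Definition \ref{def:MGuessAP}), one checks that $\Add(\ka,\lam)$ forces $\dot{\bb{T}}$ to be $\ka^+$-closed. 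Combined with a $\ka^+$-c.c.\ argument for the full poset (via a standard $\De$-system lemma on the $p$-coordinates), this ensures $\lam = \ka^{++}$ in the extension. The same factorization, with $A$ replaced by $A^*$ in the $q$-coordinate, applies to $\bb{M}^*_{\mf{l}}$.

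To verify $\AP(\lam)$ in $V[\bb{M}_{\mf{l}}]$, I would construct a witnessing sequence $\vec{a} = \la a_\al : \al < \lam \ra$ directly from the generic. For each inaccessible $\be \in A$, the $\ka^+$-c.c.\ of $\bb{M}_{\mf{l}}\res\be$ combined with a nice-names enumeration bounds by $\be$ the number of bounded subsets of $\be$ appearing in $V[\bb{M}_{\mf{l}}\res\be]$, and one arranges these into $\vec{a}\res\be$. For each $\ga \in A \cap (\ka,\lam)$, the collapse coordinate at $\ga$ produces a cofinal $\ka^+$-sequence in $\ga$ whose proper initial segments all live as names in $\bb{M}_{\mf{l}}\res\ga$, hence are listed in $\vec{a}\res\ga$. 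Since the Mahloness of $\lam$ together with the $\ka^+$-c.c.\ ensures a club of $\ga<\lam$ falls into $A$ and has its collapse realized in $V[\bb{M}_{\mf{l}}\res\ga]$, such $\ga$ are approachable with respect to $\vec{a}$, yielding $\AP(\lam)$.

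For the failure of $\AP(\lam)$ in $V[\bb{M}^*_{\mf{l}}]$, the key observation is that $A^*$ omits limits of inaccessibles: points $\ga$ which were inaccessible limits of inaccessibles in $V$ receive no direct collapse, and their new cofinality in the extension arises only through the cumulative effect of the smaller collapses at coordinates in $A^* \cap \ga$. I would identify (using the Mahloness of $\lam$) a stationary set $S$ of such $\ga$ of new cofinality $\ka^+$ and argue, via a density argument on the Cohen part exploiting the sparsity of $A^*$, that for any candidate $\vec{a} \in V[\bb{M}^*_{\mf{l}}]$ no cofinal sequence of order type $\ka^+$ in $\ga$ can have all its proper initial segments listed by $\vec{a}\res\ga$; hence $S\cap C$ is nonempty for every club $C$ and contains no point approachable with respect to $\vec a$, ruling out $\lam \in I[\lam]$.

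The main obstacle I anticipate is correctly managing the interaction of the guessing coordinate $g$ with the chain condition and closure of the quotients $\bb{M}_{\mf{l}}\res\ga$ and $\bb{M}^*_{\mf{l}}\res\ga$. However, because $g(\al)$ is forced to be trivial unless $\mf{l}(\al)$ names a $\ka^+$-directed closed poset, this coordinate contributes no names beyond those of the collapse part at the relevant cofinalities, and the whole argument reduces to the guessing-free Mitchell-style calculations which are by now standard (cf.\ \cite{8fold}).
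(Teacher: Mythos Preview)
The paper does not prove this fact; it states it as standard and cites \cite{8fold}. Your overall strategy --- enumerate bounded subsets stage-by-stage for the $\AP$ direction, and exploit the sparsity of $A^*$ at Mahlo points for the $\neg\AP$ direction --- is indeed the standard one and matches what is done in \cite{8fold}.

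That said, there is a genuine error in your chain-condition claims. Neither $\bb{M}_{\mf{l}}(\ka,\lam)$ nor $\bb{M}_{\mf{l}}\res\be$ is $\ka^+$-c.c.: the term part $\bb{T}_{\mf{l}}$ is $\ka^+$-closed and contributes antichains of size well above $\ka^+$. What is true (and what the paper records in Lemma~\ref{lemma:tailprojections}) is that $\bb{M}_{\mf{l}}$ is $\lam$-c.c., and correspondingly $\bb{M}_{\mf{l}}\res\be$ is $\be$-c.c.\ for inaccessible $\be$. This is what you actually need: the $\lam$-c.c.\ preserves $\lam$ (combined with the $\ka^+$-closed projection to preserve $\ka^+$ and the collapses to kill everything in between, you get $\lam=\ka^{++}$), and the $\be$-c.c.\ together with $|\bb{M}_{\mf{l}}\res\be|=\be$ gives the bound of $\be$ on nice names for size-$\le\ka$ subsets in $V[\bb{M}_{\mf{l}}\res\be]$. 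A $\De$-system argument on the $p$-coordinates alone gives only the $\ka^+$-c.c.\ of $\Add(\ka,\lam)$, not of the full poset.

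A smaller point: your claimed isomorphism $\bb{M}_{\mf{l}}\cong\Add(\ka,\lam)\ast\dot{\bb{T}}$ is not correct as stated. The relationship is a projection from the \emph{product} $\Add(\ka,\lam)\times\bb{T}_{\mf{l}}$ onto $\bb{M}_{\mf{l}}$ (again Lemma~\ref{lemma:tailprojections}); the ordering on the $q$-coordinate in $\bb{M}_{\mf{l}}$ uses only the restriction $p\res\al$, so it is neither a genuine product nor a two-step iteration. The projection is what you want for the closure and chain-condition analysis.
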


A now-standard technique, also originating with Abraham \cite{ABR:tree}, is to analyze Mitchell-type posets, as well as their quotients, in terms of forcing projections which include a ``term ordering" (which in turn goes back to Laver; see \cite{CummingsHandbook}, section 22). We state many of these facts without proofs; the interested reader can see, for example, \cite{8fold}.

We describe in more detail the analysis of $\bb{M}_\mf{l}$, and we will then indicate how to make the necessary changes for $\bb{M}^*_\mf{l}$.

Let $\bb{T}_\mf{l}(\ka,\lam)$ be the poset, in $V$, which consists of triples $(1,q,g)\in\bb{M}_\mf{l}$, with the restriction of the $\bb{M}_\mf{l}$ ordering. Since the names in the ranges of $q$ and $g$ are forced to be in posets which are $\ka^+$-closed, $\bb{T}_\mf{l}(\ka,\lam)$ is $\ka^+$-closed. Moreover, we have a natural forcing projection from $\Add(\ka,\lam)\times\bb{T}_\mf{l}(\ka,\lam)$ onto $\bb{M}_\mf{l}$, namely $(p,(1,q,g))\mapsto (p,q,g)$.

Now we consider quotients; we are following Lemma 2.12 of \cite{ABR:tree}. Fix an inaccessible $\de\in(\ka,\lam)$. There is then the question of whether or not $\mf{l}(\de)$ gives a name for a viable poset; see (5) of Definition \ref{def:MGuessAP}. We will only need to analyze the quotient in the case when $\mf{l}(\de)$ does name a $\ka^+$-directed closed poset, and so we assume this for the purposes of discussion.

Let $V[\bar{G}\ast H]$ be an arbitrary generic extension by $(\bb{M}_{\mf{l}}\res\de)\ast\mf{l}(\de)$. We define the poset $\N_\mf{l}[\de,\lam)\res\be$ by recursion on $\be\in(\de,\lam)\cap A$, setting $\bb{N}_\mf{l}[\de,\lam):=\bigcup_{\be\in(\de,\lam)\cap A}\bb{N}_\mf{l}[\de,\lam)\res\be$. For simplicity of notation in the discussion that follows, we write $\bb{N}_\mf{l}$ for $\bb{N}_\mf{l}[\de,\lam)$. Suppose, then, that $\be\in(\de,\lam)\cap A$, and that we've defined $\bb{N}_\mf{l}\res\al$ for all $\al\in (\de,\be)\cap A$. We also need to specify a recursive assumption, and we introduce the following notation to state the recursive assumption. Note, however, that we are going to skip the usual details about ``shifting" names. Given $m\in \bb{M}_\mf{l}\res\al$, and writing $m=(p^m,q^m,g^m)$, we set 
$$
m\res\de:=(p^m\res\de,q^m\res\de,g^m\res\de)\text{ and }m\bsl\de:=(p^m\res[\de,\al),q^m\res[\de,\al),g^m\res(\de,\al)).
$$
Note that $\de$ is not in the domain of the $g$-part of $m\bsl\de$. Our recursive assumption is that for each such $\al$, the map taking $m\in \bb{M}_\mf{l}\res\al$  to $(m\res\de,g^m(\de),m\bsl\de)$ is a dense embedding from $\bb{M}_\mf{l}\res\al$ to $(\bb{M}_\mf{l}\res\de)\ast\mf{l}(\de)\ast(\dot{\bb{N}}_\mf{l}\res\al)$. This recursive assumption will be used in item (5) below.

Now we may define $\bb{N}_\mf{l}\res\be$. Conditions in this poset are triples $(p,q,g)$ so that
\begin{enumerate}
    \item $p$ is a condition in $\Add(\ka,[\de,\be))$;
    \item $q$ is a partial function with $\dom(q)\seq [\de,\be)\cap A$ so that $|\dom(q)|\leq\ka$;
    \item for all $\al\in\dom(q)$, $q(\al)$ is an $\Add(\ka,[\de,\al))$-name for a condition in $\dot{\Add}(\ka^+,1)$;
    \item $g$ is a function with $\dom(g)\seq (\de,\be)\cap A$ so that $|\dom(g)|\leq\ka$ (note that $\de$ is excluded from $\dom(g)$ since we already forced with $\mf{l}(\de)[\bar{G}]$); and
    \item for all $\al\in\dom(g)$, if $\mf{l}(\al)$ is an $(\bb{M}_\mf{l}\res\al)$-name for a $\ka^+$-directed closed poset, then $g(\al)$ is an $(\bb{N}_\mf{l}\res\al)$-name forced to be a member of $\mf{l}(\al)[\bar{G}\ast H][(\bb{N}_\mf{l}\res\al)]$.
\end{enumerate}

Note that if $(p,q,g)$ is a triple in $\bb{N}_\mf{l}\res\be$, then neither $\dom(q)$ nor $\dom(g)$ need be elements of $V$, since $V[\bar{G}]$ contains new subsets of $\ka$. However, these sets are covered by sets of size $\ka$ in $V$. This is crucial for verifying that the above maps are indeed dense embeddings. The reader can also check that the map taking $m\in\bb{M}_\mf{l}$ to $(m\res\de,g^m(\de),m\bsl\de)$ is a dense embedding from $\bb{M}_\mf{l}$ to $(\bb{M}_\mf{l}\res\de)\ast\mf{l}(\de)\ast\dot{\N}_\mf{l}$.

To conclude the discussion of the quotients of $\bb{M}_\mf{l}$, we note the relevant projections onto $\bb{N}_\mf{l}$. In $V[\bar{G}\ast H]$, we let $\bb{T}_\mf{l}[\de,\lam)$ consist of triples $(1,q,g)$ which are in $\bb{N}_\mf{l}$ with the restriction of the $\bb{N}_\mf{l}$ ordering. The map taking $(p,(1,q,g))\in\Add(\ka,[\de,\lam))\times\bb{T}_\mf{l}[\de,\lam)$ to $(p,q,g)$ is a forcing projection to $\bb{N}_\mf{l}$.

The analysis of projections and quotients of $\bb{M}^*_\mf{l}$ is almost the exact same. The only change is that the $q$ parts (the ones responsible for collapsing) are restricted to have domain contained in $A^*$. The objects $\bb{T}^*_\mf{l}(\ka,\lam)$, $\dot{\bb{N}}^*_\mf{l}[\de,\lam)$, and $\bb{T}^*_\mf{l}[\de,\lam)$ are defined in the obvious way.

The next item summarizes the above discussion (we refer the reader to Lemma 2.12 of \cite{ABR:tree}, as well as the paper \cite{8fold}, for more details).

\begin{lemma}\label{lemma:tailprojections} Let $\ka$ be regular and $\lam>\ka$ Mahlo. Let $\mf{l}:\lam\lra V_\lam$ be a function.
\begin{enumerate}
    \item $\bb{M}_{\mf{l}}(\ka,\lam)$  (resp. $\bb{M}^*_{\mf{l}}(\ka,\lam)$) is the projection of the product of $\Add(\ka,\lam)$ and $\bb{T}_{\mf{l}}(\ka,\lam)$ (resp. $\bb{T}^*_{\mf{l}}(\ka,\lam)$) via the natural projection map. Moreover, both Mitchell posets are $\lam$-c.c. and $\ka$-directed closed.
    \item Suppose that $\de$ is inaccessible with $\ka<\de<\lam$, and that $\mf{l}(\de)$ is an $(\bb{M}_{\mf{l}}(\ka,\lam)\res\de)$-name for a $\ka^+$-directed closed poset. Then there is a dense embedding from $\bb{M}_{\mf{l}}(\ka,\lam)$ to $(\bb{M}_{\mf{l}}(\ka,\lam)\res\de)\ast\mf{l}(\de)\ast\dot{\N}_{\mf{l}}[\de,\ka)$ given by
    $$
    m\mapsto (m\res\de,g^m(\de),m\bsl\de),
    $$
    where $m=(p^m,q^m,g^m)$, and where $m\bsl\de$ is the triple $(p^m\res[\de,\lam),q^m\res[\de,\lam),g^m\res(\de,\lam))$ (note the different restriction for $g^m$). Moreover, $\dot{\N}_{\mf{l}}[\de,\ka)$ is forced to be a projection of the product of $\Add(\ka,[\de,\lam))$ and $\dot{\bb{T}}_{\mf{l}}[\de,\lam)$ via the natural projection map. Finally, $\dot{\bb{T}}_{\mf{l}}[\de,\lam)$ is forced to be $\ka^+$-closed.
    \item Item (2) also applies, with analogous notation, to $\bb{M}^*_{\mf{l}}(\ka,\lam)$.
\end{enumerate}
\end{lemma}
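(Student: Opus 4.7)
The plan is to handle the three parts in order, with (3) following by verbatim repetition of the arguments for (2) after replacing $A$ by $A^*$. For part (1), I would first verify that the map $\pi: (p,(1,q,g)) \mapsto (p,q,g)$ from $\Add(\ka,\lam) \times \bb{T}_{\mf{l}}(\ka,\lam)$ to $\bb{M}_{\mf{l}}(\ka,\lam)$ is a forcing projection, which is a direct bookkeeping exercise: $\pi$ is order-preserving and onto, and given $(p',q',g') \leq (p,q,g)$ in $\bb{M}_{\mf{l}}$, the pair $(p',(1,q',g'))$ lies below any preimage of $(p,q,g)$ and projects to $(p',q',g')$. For $\ka$-directed closure, I would take pointwise unions of the $p$-parts (valid since each has size $<\ka$ and $\ka$ is regular) and combine the $q$- and $g$-values using the $\ka^+$-directed closure of $\dot{\Add}(\ka^+,1)$ and of the guessed posets $\mf{l}(\al)$. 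For $\lam$-c.c., I would apply the $\De$-system lemma to the supports in any presumed antichain of size $\lam$, then use the Mahloness of $\lam$ to reflect to an inaccessible $\mu<\lam$ containing the root, and count the possible restrictions on the root (bounded by $\mu<\lam$, using $\mu$ strong limit and $\mu^\ka=\mu$) to pigeonhole $\lam$ conditions into the same common restriction, hence pairwise compatible outside the root, contradicting being an antichain.

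For part (2), I would verify by induction on $\be\in A\cap(\de,\lam)$ that $\Phi: m \mapsto (m\res\de, g^m(\de), m\bsl\de)$ is a dense embedding from $\bb{M}_{\mf{l}}(\ka,\lam)\res\be$ into $(\bb{M}_{\mf{l}}\res\de)\ast\mf{l}(\de)\ast(\dot{\bb{N}}_{\mf{l}}\res\be)$; this inductive statement is precisely the recursive hypothesis needed to define $\bb{N}_{\mf{l}}\res\be$ at the next stage. Order-preservation and preservation of incompatibility follow directly from the componentwise structure of the orderings. The real content is density: given $(m_0,h,n)$ in the iteration, I would build $m\in\bb{M}_{\mf{l}}\res\be$ by concatenating $p^{m_0}$ with a ground-model representative of $p^n$, $q^{m_0}$ with a representative of $q^n$, and $g^{m_0}$ with $\{(\de,h)\}\cup g^n$, then verify that $\Phi(m)$ extends $(m_0,h,n)$. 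The projection structure on $\bb{N}_{\mf{l}}[\de,\lam)$ is then established by the same argument as in (1) relativized to $[\de,\lam)$, and the $\ka^+$-closure of $\dot{\bb{T}}_{\mf{l}}[\de,\lam)$ follows from the $\ka^+$-closure of the posets in the ranges of $q$ and $g$.

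The main obstacle is the density step in (2): one must explain how a condition $n\in\bb{N}_{\mf{l}}$, which a priori lives in $V[\bar{G}\ast H]$, can be represented by ground-model name data so that the assembled $m$ is really a member of $\bb{M}_{\mf{l}}$ in $V$. The key input is that $(\bb{M}_{\mf{l}}\res\de)\ast\mf{l}(\de)$ adds no $\leq\ka$-sized subset of $V_\lam$ that is not covered from $V$ by a set of the same size: $\mf{l}(\de)$ is $\ka^+$-closed so adds no new $\ka$-sequences, and $\bb{M}_{\mf{l}}\res\de$ is a projection of $\Add(\ka,\de)\times\bb{T}_{\mf{l}}(\ka,\de)$ whose first factor is $\ka^+$-c.c. under $\ka^{<\ka}=\ka$ and whose second is $\ka^+$-closed, so Lemma \ref{lemma:distributiveEaston} together with a standard nice-name argument yields the required covering. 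Part (3) is handled by the same arguments with $A$ replaced by $A^*$ throughout; none of the steps above depends on the distinction.
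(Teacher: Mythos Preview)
Your proposal is correct and follows the same approach that the paper sketches in the discussion preceding the lemma (and that the cited references \cite{ABR:tree} and \cite{8fold} carry out in detail): the projection structure via the term ordering, the recursive definition of $\bb{N}_{\mf{l}}$ together with the inductive dense-embedding hypothesis, and the covering observation that $\ka$-sized domains in $V[\bar{G}\ast H]$ are covered by $\ka$-sized sets in $V$. The paper does not give a self-contained proof here, so your outline is exactly the expected elaboration; one minor remark is that your $\lam$-c.c.\ argument invokes Mahloness where mere inaccessibility of $\lam$ already suffices to bound the restrictions on the $\Delta$-system root.
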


\subsection{Prikry Forcing}

In this subsection, we review the two standard varieties of Prikry forcing which we use to singularize $\ka$ (see \cite{GitikHandbook} for details). We assume throughout this subsection that $\ka$ is measurable and that $U$ is a normal ultrafilter on $\ka$. Fixing notation, we let $M$ denote the ultrapower of $V$ by $U$ and $j_U$ denote the ultrapower embedding.

\begin{definition}
The poset $\Prk(U)$ consists of all pairs $(s,A)$ so that
\begin{enumerate}
    \item $s$ is a finite, increasing sequence of inaccessibles below $\ka$;
    \item $A\in U$ and $\max(\ran(s))<\min(A)$.
\end{enumerate}
We say $(t,B)\leq (s,A)$ if
\begin{enumerate}
    \item $s\sqsubseteq t$, and $\ran(t\bsl s)\seq A$;
    \item $B\seq A$.
\end{enumerate}
We say that $(t,B)$ is a \emph{direct} extension of $(s,A)$, denoted $(t,B)\leq^*(s,A)$, if $(t,B)\leq (s,A)$ and $s=t$. 
\end{definition}

$\Prk(U)$ is $\ka^+$-Knaster since there are $\ka$-many possible stems and any two conditions with the same stem are compatible. $\Prk(U)$ also satisfies the following significant property, known as the \emph{Prikry Property}: if $\vp$ is a sentence in the forcing language of $\Prk(U)$ and $(s,A)$ a condition, then there exists a direct extension $(s,B)\leq^*(s,A)$ so that $(s,B)$ decides $\vp$. Since the direct extension order $\leq^*$ is $\ka$-closed, the Prikry property implies that no bounded subsets of $\ka$ are added. Combining all of this, we see that all cardinals are preserved and $\ka$ becomes singular of cofinality $\om$.

We next define a variation of the above which turns $\ka$ into $\aleph_\om$. Roughly, the forcing will simultaneously add an $\om$-sequence cofinal in $\ka$ and collapse all but finitely-many cardinals between the points on the $\om$-sequence. It is important that the two tasks (adding an $\omega$-sequence and collapsing) are performed at the same time; it is known that performing them one-by-one may lead to collapsing (see \cite{H:prikry} for more details). Prikry forcing with collapses was introduced in \cite{GITIKo2}, and we follow their presentation.

In order to make this task work, we need a way of constraining the collapses in order to prove the Prikry lemma. We use a collapse constraining function. This function will delimit the possibilities for new collapses added for additional Prikry points, and we ensure that these constraining functions are well-behaved by requiring that their $U$-equivalence classes come from a \emph{guiding generic}, which we now define.

\begin{definition}
Fix $k\in\om\bsl 2$. A \emph{$(U,k)$-guiding generic} is a filter $G^g$ which is an element of $V$ and which is $M=\mathrm{Ult}(V,U)$-generic over the poset $\Col^M(\ka^{+k},<j_U(\ka))$.
\end{definition}

Next up is the definition of the Prikry-type forcing:

\begin{definition}\label{def:PrikryWithCollapses}
Fix $k\in\om\bsl 2$ and a $(U,k)$-guiding generic $G^g$. \emph{Prikry forcing with collapses}, denoted $\Pc(U,G^g)$, consists of all pairs $(s,F)$ so that
\begin{enumerate}
    \item $s$ is a finite sequence of the form $\la f_0,\al_1,f_1,\dots,\al_{m-1},f_{m-1}\ra$;
    \item $\la\al_1,\dots,\al_{m-1}\ra$ is a strictly increasing sequence of inaccessibles below $\ka$;
    \item $f_0\in\Col(\om,<\al_1)$ if $n\geq 1$, and otherwise $f_0\in\Col(\om,<\ka)$;
    \item for all $i$ with $0<i<m-1$, $f_i\in\Col(\al_i^{+k},<\al_{i+1})$, and $f_{m-1}\in\Col(\al_{m-1}^{+k},<\ka)$;
    \item $F$ is a function so that $\dom(F)\in U$ consists of inaccessibles, so that $\dom(F)\seq[\de,\ka)$ for some $\de$ large enough that $s\in V_\de$, and so that for all $\al\in\dom(F)$, $F(\al)\in\Col(\al^{+k},<\ka)$;
    \item $[F]_U\in G^g$.
\end{enumerate}
We refer to $s$ as a \emph{stem} and $F$ as an \emph{upper part}. Additionally, we refer to $m$ as the \emph{length} of $s$ and denote it by $\ell(s)$. Finally, we refer to the sequence of ordinals $\la\al_1,\dots,\al_{m-1}\ra$ from $s$ as the \emph{Prikry Stem} of $s$, denoted $\pstem(s)$.

We say that $(\la g_0,\be_1,g_1,\dots,\be_{n-1},g_{n-1}\ra,H)\leq (\la f_0,\al_1,f_1,\dots,\al_{m-1},f_{m-1}\ra,F)$ if
\begin{enumerate}
    \item $m\leq n$;
    \item for all $i<m$, $\al_i=\be_i$ and $g_i\leq f_i$;
    \item $\dom(H)\seq\dom(F)$, and for all $\al\in\dom(H)$, $H(\al)\leq F(\al)$;
    \item for all $i$ with $m\leq i<n$, $\be_i\in\dom(F)$, and $g_i\leq F(\be_i)$.
\end{enumerate}
\end{definition}

The direct extension ordering is defined in the natural way: $(t,H)\leq^*(s,F)$ denotes $(t,H)\leq (s,F)$ and $\ell(t)=\ell(s)$. However, unlike the direct extension ordering on $\Prk(U)$, the stems $t$ and $s$ needn't be equal, since the collapses on $t$ may strengthen those on $s$ (but they will have the same Prikry points).

$\Pc(U,G^g)$ is also $\ka^+$-Knaster since all elements in $G^g$ are compatible, and hence any two conditions with the same stem (of which there are only $\ka$-many) are compatible. This poset also satisfies the Prikry property. A product-like analysis in combination with the Prikry property shows that below $\ka$, only the cardinals explicitly collapsed by $\Pc(U,G^g)$ are collapsed. In particular, $\ka$ becomes $\aleph_\om$ in the extension.

For later arguments, it will be helpful to have some notation for orderings on stems and upper parts.

\begin{definition}
Let $s$ and $t$ be stems. We write $t\leq^*s$ if $\ell(s)=\ell(t)$ and condition (2) of the ordering is satisfied. Also, given upper parts $F$ and $H$, we write $H\leq F$ if condition (3) of the ordering is satisfied.
\end{definition}

\section{Indestructibility Results, Guessing Functions, and their Uses}\label{sec:indestructibility}

In this section we will first collect the indestructibility results which we need to construct our models. The main new result of this section is our theorem that a $\ka^+$-c.c. and $\ka^+$-linked (see below for the exact definition) poset preserves club stationary reflection at the double successor of $\ka$. The second objective of this section is to introduce the types of guessing functions which we use to build useful versions of the  Mitchell forcings from Definitions \ref{def:MGuessAP} and \ref{def:MGuessNotAP}. We will also review and adjust the proofs for how to obtain these guessing functions since we will need the details later.

\subsection{Indestructibility results}

Before we state the first indestructibility result of interest, we need to sort out some terminology in order to uniformize definitions across different papers. We take Kunen's book \cite{KunenBible} as the standard.

\begin{definition}\label{def:linked}
A poset $\ps$ is $\ka$-\emph{linked} (for a cardinal $\ka$) if $\ps$ can be partitioned into sets $\ps_\nu$, for $\nu<\ka$, which satisfy the following: for all $\nu<\ka$ and all $p,q\in\ps_\nu$, $p$ and $q$ are compatible in $\ps$. If, in addition, for any $\nu<\ka$ and $p,q\in\ps_\nu$ there is a condition $r\in\ps_\nu$ which is below $p$ and $q$, then we say that $\ps$ is \emph{strongly $\ka$-linked}.

We say that $\ps$ is $\ka$-\emph{centered} if there is a partition $\la\ps_\nu:\nu<\ka\ra$ of $\ps$ so that for all $\nu<\ka$, all $n\in\om$, and all $p_0,\dots,p_{n-1}\in\ps_\nu$, there is a condition $q$ below $p_i$ for all $i<n$. \emph{Strongly $\ka$-centered} asserts that such a $q$ can be found in $\ps_\nu$.
\end{definition}

\begin{remark}Note that $\ka$-linked and $\ka$-centered assert the existence of $\ka$-many ``cells" (the $\ps_\nu$). This is in contrast to other locutions, such as ``$\ka$-c.c." and ``$\ka$-closed", which use the parameter $\ka$ to say something about objects of size \emph{less than} $\ka$.

Also note that strongly $\ka$-linked and strongly $\ka$-centered are equivalent.
\end{remark}

\begin{fact}[\cite{GK:a}]\label{th:GK}
Suppose that $\mu$ is regular, that $\Q$ is strongly $\mu$-linked, and that $\neg \AP(\mu^{++})$ holds. Then $\Q$ forces $\neg \AP(\mu^{++})$.\footnote{The fact quoted here corresponds to Corollary 2.2 of \cite{GK:a}. Note that the authors of that paper use ``$\mu$-centered" to mean what we are calling ``strongly $\mu$-linked" (see the first paragraph of Section 2 of \cite{GK:a}), and note that they use $\mathsf{AP}_{\mu^+}$ to mean what we are calling $\AP(\mu^{++})$.}
\end{fact}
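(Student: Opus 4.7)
I would prove the contrapositive: assume $\Q\Vdash\AP(\mu^{++})$ and derive $\AP(\mu^{++})$ in $V$.

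First, strong $\mu$-linkedness immediately implies the $\mu^+$-c.c., since an antichain can contain at most one element from each of the $\mu$ compatibility cells. Consequently cardinals and cofinalities $\geq\mu^+$ are preserved, and every $V[G]$-club in $\mu^{++}$ contains a $V$-club (for each $\al<\mu^{++}$ the first club-point above $\al$ is decided by a maximal antichain of size $\leq\mu$, so the supremum of its possible values lies below $\mu^{++}$). Fix a $\Q$-name $\dot{\vec b}=\la \dot b_\al:\al<\mu^{++}\ra$ and a condition $p_0$ forcing that $\dot{\vec b}$ witnesses $\AP(\mu^{++})$; passing below $p_0$, I may assume a $V$-club $C^*\seq\mu^{++}$ of approachable points is forced.

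The key combinatorial input is that two conditions in the same cell $\Q_\nu$ cannot force incompatible decisions about any single value. Concretely, view each $\dot b_\al$ as a $\Q$-name for a function $\mu\to\mu^{++}\cup\{*\}$ enumerating $b_\al$ in increasing order (padded with $*$ past its length, which is $<\mu^+$). Then for each triple $(\al,\xi,\nu)\in\mu^{++}\times\mu\times\mu$ there is at most one ordinal $\eta_{\al,\xi,\nu}$ such that some $p\in\Q_\nu$ forces $\dot b_\al(\xi)=\eta_{\al,\xi,\nu}$; two such $p,q\in\Q_\nu$ disagreeing would be incompatible, contradicting the cell's pairwise compatibility. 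This lets me define a ground-model ``cell trace'' $a_{\al,\nu}:=\{\eta_{\al,\xi,\nu}:\xi<\mu\}\bsl\{*\}\in V$ of size $\leq\mu<\mu^+$. Using an initial-segment-preserving bijection $\pi:\mu^{++}\times\mu\to\mu^{++}$, I assemble these into a $\mu^{++}$-sequence $\vec a\in V$ of bounded subsets of $\mu^{++}$ of size $<\mu^+$.

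The main obstacle, and the technical heart of the proof, is verifying that $\vec a$ witnesses $\AP(\mu^{++})$ on $C^*$, contradicting $\neg\AP(\mu^{++})$ in $V$. The difficulty is that in $V[G]$, a single interpretation $b_\al$ is generally a \emph{diagonal} across cells: for each enumerating position $\xi$, the value $b_\al(\xi)$ is decided by a condition $p_\xi\in G$ that may lie in a different cell $\Q_{\nu_\xi}$ depending on $\xi$, so $b_\al$ need not equal any single $a_{\al',\nu}$. To handle this, I would enlarge the sequence $\vec a$ to include the ``diagonal traces'' $a_{\al,h}:=\{\eta_{\al,\xi,h(\xi)}:\xi<\mu\}\bsl\{*\}$ for ground-model cell-choice functions $h:\mu\to\mu$; then each $b_\al$ in $V[G]$ does equal such a diagonal trace for the $h$ read off from $G$. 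To keep the reindexed family a $\mu^{++}$-sequence, I would use the $\mu^+$-c.c.\ to show that only $\leq\mu^+$-many choice functions $h$ per $\al$ are genuinely relevant (each being coded by a size-$\leq\mu$ antichain of decisions), and use the initial-segment-preserving bijection to keep all codes below $\ga$ whenever $\al<\ga$. The $V[G]$-witness $A_\ga$ of $\vec b$-approachability then translates into a $V$-witness of $\vec a$-approachability for $\ga$, since each proper initial segment $A_\ga\cap\be$ equals some $b_\al$ with $\al<\ga$, which in turn equals some enlarged-sequence entry $a_{\al,h}$ indexed below $\ga$.
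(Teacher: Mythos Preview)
This statement is quoted in the paper as a fact from \cite{GK:a} (their Corollary~2.2) without proof, so there is no in-paper argument to compare your attempt against.

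On your sketch: the setup through the single-cell traces $a_{\alpha,\nu}$ is correct, and you rightly isolate the core difficulty that the generic $b_\alpha$ is assembled diagonally across cells. But your fix via diagonal traces $a_{\alpha,h}$ for $h:\mu\to\mu$ does not work as stated. There are $\mu^\mu=2^\mu$ such $h$, and the hypotheses place no bound on $2^\mu$; your appeal to the $\mu^+$-c.c.\ to cut this down to $\leq\mu^+$ many ``relevant'' $h$ per $\alpha$ is not justified, since the ground-model sequence must be fixed independently of the generic while different generics realize different cell-choice functions with no a~priori bound on how many occur. If $2^\mu>\mu^+$ your enlarged family simply cannot be reindexed into a $\mu^{++}$-sequence with the required ``appear before $\gamma$'' bookkeeping. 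There is also a second, unaddressed issue: even granting that every initial segment of the $V[G]$-witness $A_\gamma$ appears in your ground-model sequence, you still owe a witness lying in $V$, and approachability of $\gamma$ with respect to a ground-model sequence is a $\Sigma_1$ property and hence not automatically downward absolute (strongly $\mu$-linked forcings such as Cohen forcing can add cofinal branches to $\mu^+$-trees, which is exactly the shape of the obstruction here). Your last sentence asserts this translation without argument.

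The Gitik--Krueger argument sidesteps the first problem by using directedness of each cell more sharply: if some $p\in\Q_\nu$ outright \emph{decides} $\dot b_\alpha=\check s$, then directedness of $\Q_\nu$ forces $a_{\alpha,\nu}=s$ exactly (any $q\in\Q_\nu$ putting some $\beta$ into $\dot b_\alpha$ has a common refinement with $p$ inside $\Q_\nu$, whence $\beta\in s$). Choosing along the $V[G]$-witness $A_\gamma$ conditions $p_i\in G$ that decide both $A_\gamma\res i$ and its index $\alpha_i$, one gets that every initial segment of $A_\gamma$ is literally one of the $a_{\alpha,\nu}$'s with $\alpha<\gamma$, so no enumeration of diagonals is needed. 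For the remaining step, including how the witness is pulled down to $V$, you should consult the original paper.
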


We will often be applying Fact \ref{th:GK} to some version of $\Add(\mu,\nu)$, and so we summarize the situation here (see Lemma 6.4 of \cite{GLS:8fold} for the details):

\begin{fact}\label{fact:AddCentered}
Suppose that $\ka^{<\ka}=\ka$ and that $\mu\leq 2^\ka$. Then $\Add(\ka,\mu)$ is strongly $\ka$-centered.
\end{fact}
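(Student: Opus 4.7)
The plan is to produce a partition of $\Add(\ka,\mu)$ into $\ka$-many strongly linked cells. Since $\ka^{<\ka}=\ka$ and $\mu\leq 2^\ka$, I will first fix an injection $\iota:\mu\hookrightarrow{}^{\ka}2$, which lets me identify each column index $\xi<\mu$ with a branch of the binary tree of height $\ka$. The cells will be indexed by ``small data'' (of size $<\ka$) at some level $\gamma<\ka$; the counting $|{}^{<\ka}2|\leq 2^{<\ka}\leq\ka^{<\ka}=\ka$ will keep the total number of cells at $\leq\ka$.

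Given $p\in\Add(\ka,\mu)$, whose domain has size $<\ka$, regularity of $\ka$ provides a least $\gamma(p)<\ka$ such that (i) $\pi_1(\dom(p))\subseteq\gamma(p)$ and (ii) the map $\xi\mapsto\iota(\xi)\res\gamma(p)$ is injective on $\pi_2(\dom(p))$. I then code $p$ by the pair $c(p)=(\gamma(p),D(p))$, where $D(p)=\{(\iota(\xi)\res\gamma(p),\alpha,p(\alpha,\xi)):(\alpha,\xi)\in\dom(p)\}$. A counting argument then bounds the number of codes by $\ka$, inducing the desired partition. For the ``linked'' direction: if $c(p)=c(q)$ and $(\alpha,\xi)\in\dom(p)\cap\dom(q)$, then the triple $(\iota(\xi)\res\gamma(p),\alpha,p(\alpha,\xi))$ lies in $D(p)=D(q)$, and the injectivity clause (ii) applied to $q$ pins this triple down to the entry at $(\alpha,\xi)$ in $\dom(q)$, forcing $q(\alpha,\xi)=p(\alpha,\xi)$. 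Thus $p,q$ agree on their common domain and are compatible.

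The delicate step, which I expect to be the main obstacle, is verifying the strongly linked condition: for $p,q$ in the same cell I must locate a common extension $r$ lying in the same cell. The natural candidate is $p\cup q$, but this can fail to have code equal to $c(p)$: if $\pi_2(\dom(p))\cup\pi_2(\dom(q))$ happens to contain distinct $\xi,\xi'$ with $\iota(\xi)\res\gamma(p)=\iota(\xi')\res\gamma(p)$---a collision invisible to either side alone---then $\gamma(p\cup q)>\gamma(p)$ and the codes disagree. To prevent this I would refine the partition so that, within each cell, the representative $\xi$ in every $\iota$-fiber is canonically pinned: either by additionally recording in $D(p)$ enough of $\iota(\xi)$ beyond $\gamma(p)$ to single out $\xi$ among those ordinals with the same $\gamma(p)$-restriction, or by passing to a dense subposet of ``canonical'' conditions selected via the natural well-ordering on $\mu$. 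Since size-$<\ka$ subsets of ${}^{<\ka}2\times\ka\times 2$ still number at most $\ka^{<\ka}=\ka$, the refined partition retains $\leq\ka$ cells. Carrying out this bookkeeping---and in particular verifying that the canonical conditions are suitably dense and that the refined code is genuinely stable under $\cup$ within each cell---is the technical core of the argument.
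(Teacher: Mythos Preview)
The paper itself does not prove this fact; it refers the reader to Lemma~6.4 of \cite{GLS:8fold}, so there is no in-paper argument to compare against.

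Your setup is correct, and you have correctly isolated the genuine obstacle: under the coding by $(\gamma(p),D(p))$ with $\gamma(p)$ minimal, the union $p\cup q$ of two same-code conditions need not receive the same code, since $\pi_2(\dom p)\cup\pi_2(\dom q)$ may fail to be $\iota$-separated at level $\gamma(p)$. However, neither of your suggested repairs works. Recording ``enough of $\iota(\xi)$ beyond $\gamma(p)$ to single out $\xi$'' either requires unbounded data (destroying the $\ka$-bound on the number of codes) or merely relocates the collision to a higher level. And there is no evident notion of ``canonical condition'' that is simultaneously dense, stable under $\cup$, and compatible with a $\ka$-sized code set.

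The clean fix requires no refinement of the data; it is a change of viewpoint. For each $\gamma<\ka$ and each partial $\sigma:\gamma\times{}^{\gamma}2\to 2$ of size $<\ka$, let $P_\sigma$ be the set of $p\in\Add(\ka,\mu)$ such that $\pi_1(\dom p)\subseteq\gamma$ and, for every $(\al,\xi)\in\dom p$, the pair $(\al,\iota(\xi)\res\gamma)$ lies in $\dom\sigma$ with $p(\al,\xi)=\sigma(\al,\iota(\xi)\res\gamma)$. There are at most $\ka$ such $\sigma$ (using $2^{<\ka}\leq\ka^{<\ka}=\ka$), the $P_\sigma$ cover $\Add(\ka,\mu)$, and each $P_\sigma$ is closed under unions of compatible pairs. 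The key property you are missing is that each $P_\sigma$ is also closed under \emph{subfunctions}: if $p\subseteq r\in P_\sigma$ then $p\in P_\sigma$. Now well-order the $\sigma$'s and let $c(p)$ be the least $\sigma$ with $p\in P_\sigma$. If $c(p)=c(q)=\sigma$, then $p\cup q\in P_\sigma$; and $p\cup q$ cannot lie in any earlier $P_{\sigma'}$, since $p\subseteq p\cup q$ would then place $p\in P_{\sigma'}$, contradicting minimality of $\sigma$ for $p$. Hence $c(p\cup q)=\sigma$, and the fibers of $c$ give the desired strongly $\ka$-linked partition.
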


\begin{Cor}\label{Cor:Add,AP}
Suppose that $\ka^{<\ka}=\ka$, that $2^\ka\geq\ka^{++}$, and that $\neg\AP(\ka^{++})$ holds. Then for any cardinal $\mu$, $\Add(\ka,\mu)$ forces that $\neg\AP(\ka^{++})$ holds.
\end{Cor}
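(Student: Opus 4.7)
The plan is to reduce everything to the case $\mu \leq 2^\ka$, in which the result follows immediately from Facts \ref{fact:AddCentered} and \ref{th:GK}: Fact \ref{fact:AddCentered} gives that $\Add(\ka,\mu)$ is strongly $\ka$-centered, hence strongly $\ka$-linked by the remark after Definition \ref{def:linked}, and Fact \ref{th:GK} (with its ``$\mu$'' taken to be our $\ka$) then yields that $\Add(\ka,\mu)$ forces $\neg \AP(\ka^{++})$. The real work lies in the case $\mu > 2^\ka$, where $\Add(\ka,\mu)$ itself need not be strongly $\ka$-centered.

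For general $\mu$, I would argue by contradiction via a nice-names reduction. Suppose some condition in $\Add(\ka,\mu)$ forces $\AP(\ka^{++})$; passing to a generic $G$ below it, work in $V[G]$ and fix witnesses for $\AP(\ka^{++})$: a sequence $\vec a = \la a_\al : \al<\ka^{++}\ra$, a club $C \seq \ka^{++}$, and (by choice in $V[G]$) for each $\ga \in C$ an unbounded $A_\ga \seq \ga$ of order type $\cf(\ga)$ whose proper initial segments all appear in $\vec a$ before stage $\ga$. Each of these $\ka^{++}$-many objects is a subset of $\ka^{++}$, and since $\Add(\ka,\mu)$ has the $\ka^+$-c.c., each admits a nice name whose support in $\ka \times \mu$ has size at most $\ka^{++}$. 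Taking the union of all such supports yields $Y \seq \mu$ with $|Y| \leq \ka^{++}$ such that every witness lies in $V[G_Y]$, where $G_Y := G \cap \Add(\ka,Y)$. Using the standard factorization $\Add(\ka,\mu) \cong \Add(\ka,Y) \times \Add(\ka,\mu \setminus Y)$, the restriction $G_Y$ is $V$-generic for $\Add(\ka,Y)$, and since the $A_\ga$'s are themselves in $V[G_Y]$, the triple $(\vec a, C, \la A_\ga : \ga \in C\ra)$ witnesses $\AP(\ka^{++})$ already in $V[G_Y]$. But $\Add(\ka,Y) \cong \Add(\ka,|Y|)$ with $|Y| \leq \ka^{++} \leq 2^\ka$ by hypothesis, so the easy case applies to $\Add(\ka,Y)$ and delivers $V[G_Y] \models \neg \AP(\ka^{++})$, a contradiction.

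The main obstacle I anticipate is making sure that the approachability data truly descends to $V[G_Y]$; in particular one must include the chosen $A_\ga$'s in the support count, since $\AP(\ka^{++})$ is merely $\Sigma_1$-over-$(\vec a, C)$ and is therefore only upward absolute between $V[G_Y]$ and $V[G]$. The counting itself is routine ($\ka^{++}$-many nice names with supports of size at most $\ka^{++}$, combined via $\ka^{++} \cdot \ka^{++} = \ka^{++}$), and one should also verify that $\cf(\ga)$ agrees between $V[G_Y]$ and $V[G]$; this is automatic since $\Add(\ka,\mu \setminus Y)$ is both $\ka$-closed and $\ka^+$-c.c.\ in $V[G_Y]$ and so preserves all cofinalities.
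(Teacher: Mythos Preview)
Your proof is correct and follows essentially the same approach as the paper: split into the case $\mu\leq\ka^{++}$ (where Facts \ref{fact:AddCentered} and \ref{th:GK} apply directly) and the case $\mu>\ka^{++}$ (where a nice-names/support argument reduces any putative $\AP(\ka^{++})$ witness to one already added by $\Add(\ka,\ka^{++})$). The paper compresses the second case into a single sentence, whereas you spell out the support count and the absoluteness caveat about carrying the $A_\ga$'s down to $V[G_Y]$; both are good points that the paper leaves implicit.
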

\begin{proof}
There are two cases on $\mu$. First, if $\mu\leq\ka^{++}\leq 2^\ka$, then this follows by Facts \ref{fact:AddCentered} and \ref{th:GK}. On the other hand, if $\mu>\ka^{++}$, then we see that if $\Add(\ka,\mu)$ were to add a sequence witnessing $\AP(\ka^{++})$, then $\Add(\ka,\ka^{++})$ would add such a sequence. This then reduces to the previous case.
\end{proof}

Now we turn to preserving club stationary reflection.

\begin{fact}[\cite{HS:u}]\label{th:AddCSR}
Suppose that $\ka^{<\ka}=\ka$ and that $\CSR(\ka^{++})$ holds. Then for any $\rho$, $\Add(\ka,\rho)$ forces $\CSR(\ka^{++})$.
\end{fact}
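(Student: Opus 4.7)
The plan is to locate, for each $V[G]$-stationary set $S\subseteq\ka^{++}\cap\cof(<\ka)$, a ground model stationary set $T\subseteq S$; I can then apply $\CSR(\ka^{++})$ in $V$ to $T$ and transfer the resulting reflection back to $V[G]$. To set up, note that since $\ka^{<\ka}=\ka$, the poset $\Add(\ka,\rho)$ is $\ka$-closed and $\ka^+$-Knaster, so $\ka^+$-c.c., and hence it preserves cardinals and cofinalities. Moreover, every nice name for a subset of $\ka^{++}$ has total coordinate support of size at most $\ka^{++}$, and stationarity and reflection are absolute between $V[\Add(\ka,X)]$ and $V[\Add(\ka,\rho)]$ for $X\subseteq\rho$ (by $\ka^+$-c.c.\ of the quotient $\Add(\ka,\rho\setminus X)$). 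Consequently I may reduce to $\rho\leq\ka^{++}$, in which case $|\Add(\ka,\rho)|\leq\ka^{++}$.

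For a name $\dot S$ of a stationary $S\subseteq\ka^{++}\cap\cof(<\ka)$, define $T_p:=\{\alpha: p\Vdash\check\alpha\in\dot S\}\in V$ for each condition $p$. The central subclaim is that $D:=\{p: T_p \text{ is stationary in } V\}$ is dense in $\Add(\ka,\rho)$. Assuming this fails below some $p_0$, pick $V$-clubs $C_p$ disjoint from $T_p$ for every $p\leq p_0$. Enumerate $\{p\leq p_0\}=\la p_\beta:\beta<\ka^{++}\ra$ along a stratification by support, arranged so that on a $V$-club of $\alpha$'s the nice-name antichain $A_\alpha$ coding $\check\alpha\in\dot S$ is contained in $\{p_\beta:\beta<\alpha\}$. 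Then $\triangle:=\triangle_{\beta<\ka^{++}}C_{p_\beta}$ is a $V$-club, and since $p_0\Vdash\dot S$ is stationary, in $V[G]$ I pick $\alpha\in S\cap\triangle$ lying in this club; the stratification forces the witness $r\in G\cap A_\alpha$ to equal $p_\beta$ for some $\beta<\alpha$, whence $\alpha\in C_{p_\beta}\cap T_{p_\beta}=\emptyset$, a contradiction. So $D$ is dense, and by genericity some $p\in G\cap D$ yields $T_p\subseteq S$ stationary in $V$.

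Applying $\CSR(\ka^{++})$ in $V$ to $T_p$ produces a $V$-club $C$ such that $T_p\cap\gamma$ is stationary in $\gamma$ in $V$ for every $\gamma\in C\cap\cof(\ka)$. The final step is to argue that each such $T_p\cap\gamma$ remains stationary in $\gamma$ in $V[G]$; this requires showing that any name for a $V[G]$-club in $\gamma$ of $V$-cofinality $\ka$ can, by combining the $\ka^+$-c.c.\ decomposition of the name with $\ka$-closed approximation of its values, be forced to contain a $V$-club. Granted this, $S\supseteq T_p$ reflects at every $\gamma\in C\cap\cof(\ka)$ in $V[G]$, and $C$ (still a club in $V[G]$) witnesses $\CSR(\ka^{++})$ for $S$ in $V[G]$.

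The principal obstacle is the density subclaim in the second step: arranging a stratified enumeration so that $A_\alpha\subseteq\{p_\beta:\beta<\alpha\}$ for a $V$-club of $\alpha\in\cof(<\ka)$, despite the fact that nice-name supports need not \emph{a priori} be bounded below such $\alpha$. Resolving this may require first replacing $\dot S$ by an equivalent name whose antichains are ``localized'' (perhaps via an elementary submodel argument together with absorption of conditions). The secondary subtlety is the transfer of stationarity in $\gamma\in\cof(\ka)$, where chain condition alone does not suffice and one must simultaneously exploit $\ka$-closure.
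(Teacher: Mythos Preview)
The paper does not give its own proof of this fact; it is cited from \cite{HS:u}. The closest argument in the paper is Theorem~\ref{th:CSRPreserve}, which uses a genuinely different mechanism: rather than tracing by conditions $T_p=\{\al:p\Vdash\al\in\dot S\}$, it traces by cells of a $\ka^+$-linked partition, setting $T_\nu=\{\al:(\exists p\in\ps_\nu)\,p\Vdash\al\in\dot S\}$. Since there are only $\ka^+$ cells, the witnessing clubs $C_\nu$ can simply be intersected (no diagonal intersection, no stratification), and linkedness lets one amalgamate conditions at the end to derive the contradiction. That argument, however, only recovers Fact~\ref{th:AddCSR} when $2^\ka\geq\ka^{++}$, so that $\Add(\ka,\rho)$ is $\ka$-centered after your reduction to $\rho\leq\ka^{++}$; see the remark following Theorem~\ref{th:CSRPreserve}.

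Your proposal has a systematic cofinality slip: in $\CSR(\ka^{++})$ the stationary sets live in $\ka^{++}\cap\cof({<}\ka^+)$ and the reflection points have cofinality $\ka^+$, not $\ka$. Once this is corrected, your ``secondary subtlety'' disappears entirely: preservation of stationarity in $\gamma$ of cofinality $\ka^+$ is immediate from the $\ka^+$-c.c.\ alone (every club name in such a $\gamma$ contains a ground-model club --- exactly the standard fact invoked at the start of the proof of Theorem~\ref{th:CSRPreserve}). No appeal to $\ka$-closure is needed there.

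The genuine gap is the one you flag: the density of $D$. Your diagonal-intersection argument needs, for club-many $\al$, that the antichain $A_\al$ deciding $\al\in\dot S$ is enumerated before stage $\al$; but there is no reason the conditions in $A_\al$ should have support bounded below $\al$, and permuting coordinates or passing to submodels changes the name rather than tames it. More fundamentally, $S$ is covered by $\{T_p:p\in G\}$, yet $G$ can have size $\ka^{++}$, so no single $T_p$ need be stationary on cardinality grounds alone --- which is precisely why the $\ka^+$-linked approach, with only $\ka^+$ traces, is so much cleaner. As written, this step is incomplete.
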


While we do not have a general result for the preservation of the tree property, there are results over the Mitchell model, see \cite{HS:ind}.

The last item in this subsection is our new general preservation theorem for $\CSR(\ka^{++})$. As in many proofs in this vein (\cite{ScalesSquares}, \cite{HS:u}, \cite{SigmaPrikry1}, and \cite{SigmaPrikry2}), we use the ``traces with a fixed stem" idea and the fact that the desired reflection principle holds prior to the forcing. 

\begin{theorem}\label{th:CSRPreserve}
If $\ps$ is both $\ka^+$-linked and $\ka^+$-c.c. (in particular, if $\ps$ is simply $\ka$-linked), and if $\CSR(\ka^{++})$ holds, then $\ps$ forces $\CSR(\ka^{++})$.
\end{theorem}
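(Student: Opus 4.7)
The plan is to use the ``traces through the linked cells'' strategy, adapting the stem-trace approach of the $\Sigma$-Prikry papers \cite{SigmaPrikry1,SigmaPrikry2}. Fix a $V$-generic $G\seq\ps$, set $S:=\dot S^G$, and let $\ps=\bigcup_{\nu<\ka^+}\ps_\nu$ be the $\ka^+$-linked partition. For each $\nu<\ka^+$, I would define the trace $T_\nu:=\{\alpha<\ka^{++}:\exists p\in\ps_\nu,\ p\Vdash\check\alpha\in\dot S\}$ in $V$. Every element of $S$ is witnessed by some $p\in G$ lying in a single cell, so $S\seq\bigcup_{\nu\in G^*}T_\nu$, where $G^*:=\{\nu:\ps_\nu\cap G\ne\es\}$ has size at most $\ka^+$. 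Since $\ka^{++}$ is preserved by $\ka^+$-c.c., the nonstationary ideal on $\ka^{++}$ remains $\ka^{++}$-complete in $V[G]$, so some $T_{\nu^*}\cap S$ is $V[G]$-stationary; in particular $T_{\nu^*}$ is stationary in $V[G]$, and pulling back any $V$-club disjoint from $T_{\nu^*}$ shows that $T_{\nu^*}$ is also stationary in $V$.

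Next, I would attend to the cofinality bookkeeping: by $\ka^+$-c.c.\ one has $\cof^V(\ka^+)\cap\ka^{++}=\cof^{V[G]}(\ka^+)\cap\ka^{++}$ and $\cof^V(<\ka^+)\cap\ka^{++}=\cof^{V[G]}(<\ka^+)\cap\ka^{++}$, so $T_{\nu^*}\seq\cof^V(<\ka^+)\cap\ka^{++}$. Applying $\CSR^V(\ka^{++})$ would then give a $V$-club $C^*\seq\ka^{++}$ at whose cofinality-$\ka^+$ points $T_{\nu^*}$ reflects in $V$, and the $\ka^+$-c.c.\ propagates this to $V[G]$: $T_{\nu^*}\cap\ga$ is stationary in $\ga$ in $V[G]$ for every $\ga\in C^*\cap\cof(\ka^+)$.

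The main and hardest step will be to descend from reflection of $T_{\nu^*}$ to reflection of $S$ itself. My plan is to argue by contradiction: suppose the set $E$ of $\ga\in C^*\cap\cof(\ka^+)$ where $S\cap\ga$ is nonstationary is itself stationary in $V[G]$. For each $\ga\in E$ I would fix a $V[G]$-club $D_\ga\seq\ga$ disjoint from $S$ and, using the $\ka^+$-c.c.\ bounding of names for $\min(\dot D_\ga\setminus\be)$ (available because $\cf^V(\ga)=\ka^+$), extract a $V$-club $D_\ga^*\seq D_\ga$. Then each $\alpha\in D_\ga^*\cap T_{\nu^*}\cap\ga$ lies in $T_{\nu^*}\setminus S$, giving witnesses $p_\alpha\in\ps_{\nu^*}$ with $p_\alpha\Vdash\check\alpha\in\dot S$ and $r_\alpha\in G$ with $r_\alpha\Vdash\check\alpha\notin\dot S$; linkedness forces $r_\alpha\notin\ps_{\nu^*}$, hence $r_\alpha\in\ps_{\mu_\alpha}$ for some $\mu_\alpha\ne\nu^*$. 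A Fodor argument in $V[G]$ applied to the regressive map $\alpha\mapsto\mu_\alpha$ would stabilize $\mu_\alpha=\mu^*$ on a stationary piece, whence the $V$-set $T_{\nu^*}\cap T^-_{\mu^*}$ is stationary, with $T^-_{\mu^*}:=\{\alpha:\exists r\in\ps_{\mu^*},\ r\Vdash\check\alpha\notin\dot S\}$. A second application of $\CSR^V$ to this set, combined with the linked structure on $\ps_{\mu^*}$, should produce the contradiction.

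The principal obstacle is precisely this last descent: the linked compatibility of $\ps_{\nu^*}$ does not by itself locate a witness from $G$, so the positive-trace/negative-trace pigeonholing described above is the essential new input. In the $\Sigma$-Prikry setting, conditions sharing a stem admit directed joins via direct extension, so the analogous step collapses a whole stationary trace into a single forcing condition; no such join is available for a general $\ka^+$-linked poset, and handling it carefully is the main content generalizing Theorem~4.12(iii) of \cite{HS:u}.
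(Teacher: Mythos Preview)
Your trace sets $T_\nu$ are the right objects, but the strategy of committing to a single cell $\nu^*$ is where the argument stalls. The descent step you flag as the main obstacle is genuinely problematic in your setup: the conditions $p_\alpha\in\ps_{\nu^*}$ witnessing $\alpha\in T_{\nu^*}$ have no reason to lie in $G$, and the conditions $r_\alpha\in G$ witnessing $\alpha\notin S$ lie in arbitrary other cells. Your proposed Fodor-and-second-$\CSR$ repair does not close: after stabilizing $\mu_\alpha=\mu^*$ you know that the ground-model set $T_{\nu^*}\cap T^-_{\mu^*}$ is stationary, but the linked structure on $\ps_{\mu^*}$ only tells you the negative witnesses are pairwise compatible (which you already knew, since they lie in $G$); it gives no leverage against the positive witnesses sitting in $\ps_{\nu^*}$, and a further club of reflection points for $T_{\nu^*}\cap T^-_{\mu^*}$ does not contradict anything about $\dot S$. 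You are back where you started, with one more index.

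The missing idea is to avoid committing to a single cell. Call $\nu$ \emph{strong} if $T_\nu$ is stationary in $V$; your own covering argument shows that every $p$ extends to some $q$ whose cell $\vp(q)$ is strong. Now intersect: set $C=\bigcap\{C_\nu:\nu\text{ strong}\}$, where each $C_\nu$ is a $\CSR$-club for $T_\nu$. This single $V$-club works for $\dot S$ outright. For if some $q$ forces that $\dot S$ fails to reflect at some $\de\in C\cap\cof(\ka^+)$, pull back (via $\ka^+$-c.c.) a $V$-club $E\subseteq\de$ forced below $q$ to be disjoint from $\dot S$, and \emph{only now} extend $q$ to $r$ with $\vp(r)$ strong. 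Since $\de\in C\subseteq C_{\vp(r)}$, the trace $T_{\vp(r)}$ reflects at $\de$, so pick $\alpha\in E\cap T_{\vp(r)}\cap\de$ and a witness $u\in\ps_{\vp(r)}$ forcing $\alpha\in\dot S$. Linkedness now does exactly its job: $r$ and $u$ are in the same cell, hence compatible, and any common extension forces $\alpha\in E\cap\dot S\cap\de$, contradicting $r\leq q$. The point is that the cell is chosen \emph{after} the bad $\de$ and the offending condition, not before.
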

\begin{proof} 
Before beginning the main part of the proof, we recall the following standard forcing fact: if $\mu$ is regular and $\Q$ is $\mu$-c.c., then for any condition $q\in\Q$, any ordinal $\de$ of cofinality $\mu$, and any $\Q$-name $\dot{D}$ for a club subset of $\de$, there is a club $C$ in $V$ so that $q\Vdash \check{C}\seq\dot{D}$.

Let $\la\ps_\nu:\nu<\ka^+\ra$ be a partition witnessing that $\ps$ is $\ka^+$-linked, and let $\vp:\ps\lra\ka^+$ be the function which takes $p$ to the unique $\nu$ with $p\in\ps_\nu$.

Suppose that $\CSR(\ka^{++})$ holds and that $\dot{S}$ is a $\ps$-name for a stationary subset of\footnote{Note that $\ps$ could singularize $\ka$, for instance, if $\ps$ is Prikry forcing from a normal measure on $\ka$. Then $\dot{S}$ would be a name for a stationary subset of $\ka^{++}\cap\cof(<\ka)$.} $\ka^{++}\cap\cof(\leq\ka)$. For a $\nu<\ka^+$, corresponding to a cell in the partition, we let 
$$
T_\nu:=\lb \al<\ka^{++}:(\exists p\in\ps_\nu)\;\left[p\Vdash\al\in\dot{S}\right]\rb.
$$
We say that $\nu$ is \textbf{strong} if $T_\nu$ is stationary in $\ka^{++}$. Note that if $\al\in T_\nu$, then $\cf(\al)\leq\ka$, since $\ps$ preserves $\ka^+$. In particular, $T_\nu\seq\ka^{++}\cap\cof(\leq\ka)$.

We claim that for all $p\in\ps$, we can extend $p$ to a condition $q$ so that $\vp(q)$ is strong. So fix $p$. Let $G$ be $V$-generic over $\ps$ with $p\in G$, and note that $S:=\dot{S}[G]$ is stationary in $\ka^{++}$. We assert that
$$
S\seq\bigcup\lb T_{\vp(r)}:r\leq p\we r\in G\rb;
$$
indeed, if $\al\in S$, then there is some $u\in G$ so that $u\Vdash\al\in\dot{S}$. Let $r\leq p,u$ with $r\in G$. Then $r\Vdash\al\in\dot{S}$ too, so $\al\in T_{\vp(r)}$ (as witnessed by $r$). However, since $S$ is a stationary subset of $\ka^{++}$ in $V[G]$ and $S$ is covered by $T_\mu$ for at most $\ka^+$-many $\mu$, there must be some $q\leq p$ with $q\in G$ so that $T_{\vp(q)}$ is stationary in $V[G]$. Since $T_{\vp(q)}$ is an element of $V$, it is stationary in $V$ also. This completes the proof of this claim.

For each $\nu<\ka^+$ so that $\nu$ is strong, let $C_\nu$ be a club subset of $\ka^{++}$ so that for all $\de\in C_\nu\cap\cof(\ka^+)$, $T_{\nu}$ reflects at $\de$. Let $C:=\bigcap\lb C_\nu:\nu\text{ is strong}\rb$. Then $C$ is a club subset of $\ka^{++}$ since it is the intersection of at most $\ka^+$-many clubs in $\ka^{++}$.

We claim that $\ps$ outright forces that for all $\de\in C\cap\cof(\ka^+)$, $\dot{S}$ reflects at $\de$. Suppose that this is false, for a contradiction. Then there is some condition $p$ which forces that it is false that $\dot{S}$ reflects at every point in $C\cap\cof(\ka^+)$. Extending $p$ to $q$, we may find a specific $\de\in C\cap\cof(\ka^+)$ so that $q$ forces that $\dot{S}$ does not reflect at $\de$. Let $\dot{D}$ be a $\ps$-name for a club subset of $\de$ so that $q\Vdash\dot{D}\cap\dot{S}\cap\de=\es$. Since $\ps$ is $\ka^+$-c.c., we may apply the fact from the beginning of the proof to find a club subset, say $E$, of $\de$ which is in $V$ and so that $q \Vdash\check{E}\seq\dot{D}$. Hence $q\Vdash \check{E}\cap\dot{S}\cap\de=\es$. Now let $r\leq q$ be a condition so that $\vp(r)$ is strong. Then $\de\in C\seq  C_{\vp(r)}$, and therefore $T_{\vp(r)}$ reflects at $\de$. In particular, $E\cap T_{\vp(r)}\cap\de$ is nonempty. Let $\al$ be in this intersection. Then there is some $u\in \ps_{\vp(r)}$ so that $u\Vdash\al\in\dot{S}$. Since $r,u\in \ps_{\vp(r)}$, we may find some $v\in\ps$ with $v\leq r,u$. Then $v\Vdash\al\in\dot{S}$, and since $\al\in E\cap\de$, $v$ also forces that $\al\in E\cap\dot{S}\cap\de$. However, $v\leq r\leq q$, and $q$ forces that this intersection is empty. This is a contradiction.
\end{proof}

\begin{remark}
The previous theorem applies to a variety of forcings, including $\Prk(U)$ and $\Pc(U,G^g)$. Note also that the previous theorem subsumes Fact \ref{th:AddCSR} when $2^\ka\geq\ka^{++}$.
\end{remark}

\subsection{Guessing Functions}

In \cite{CummingsForeman}, Cummings and Foreman constructed Mitchell forcings using a guessing function from a supercompact cardinal known as a Laver diamond. This is a function $\ell:\ka\lra V_\ka$ so that for any $x$, there is a supercompactness measure $\cal{U}$ on some $P_\ka(\mu)$ so that $j_{\cal{U}}(\ell)(\ka)=x$, where $j_{\cal{U}}$ is the corresponding ultrapower embedding. Therefore, by choosing an appropriate $\cal{U}$, one can ensure that the $j_{\cal{U}}$-image of the Mitchell forcing contains additional forcings of interest. This is in turn used for showing that the tree property is preserved by certain forcings. 

However, there are other guessing functions which are both sufficient for our purposes and only require smaller large cardinal hypotheses, such as Mahlo or weakly compact. See Hamkins (\cite{Hamkins:LotteryPreparation} and \cite{Hamkins:LaverDiamond}) for the weakly compact case and (\cite{GLS:8fold}) for the Mahlo case. We cover the weakly compact case in depth, since later we will need variations of Hamkins' arguments and since the details will also be important.

We begin by recalling the definition of weak compactness.

\begin{definition}\label{def:wc}
An inaccessible cardinal $\lam$ is \emph{weakly compact} if for every transitive set $M$ of size $\lam$ so that $M$ satisfies enough $\zfc$ and $\,^{<\lam}M\seq M$, there is an elementary embedding $k:M\lra N$ so that $\crit(k)=\lam$ and so that $N$ is transitive, has size $\lam$, and is closed under $<\lam$-sequences.
\end{definition}

\begin{definition}\label{def:WC}
If $M,N$, and $k$ are as in Definition \ref{def:wc}, then we say that $k$ is a \emph{weakly compact embedding} from $M$ to $N$.
\end{definition}

The following result shows that if $\lam$ is weakly compact, then a forcing of size $<\lam$ will preserve the weak compactness of $\lam$. We are not sure of a citation, so we take it to be among the class of ``most likely folklore" results.

\begin{lemma}\label{lemma:preserveWC}
   Suppose that $\lam$ is weakly compact and $|\ps|<\lam$. Then $\ps$ preserves that $\lam$ is weakly compact.
\end{lemma}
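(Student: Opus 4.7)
The plan is to use the embedding characterization of weak compactness in $V$ and lift the resulting embedding through $\ps$.

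Fix a $V$-generic filter $G \seq \ps$ and let $M^* \in V[G]$ be an arbitrary transitive model of size $\lam$ with $(M^*)^{<\lam} \seq M^*$ satisfying enough $\zfc$; our goal is to produce a weakly compact embedding on $M^*$ inside $V[G]$. Since $\ps$ is $<\lam$-c.c., there is a nice $\ps$-name $\dot{M}^*$ in $V$ for $M^*$ of size $\lam$. Taking a Skolem hull of $H(\theta)^V$ for a sufficiently large $\theta$, closing under $<\lam$-sequences, and transitively collapsing, we build a transitive $M \in V$ of size $\lam$, with $M^{<\lam} \seq M$ in $V$, satisfying enough $\zfc$, and containing $\ps$, $\dot{M}^*$, and $\lam$. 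Applying the weak compactness of $\lam$ in $V$ to $M$ yields a weakly compact embedding $k \colon M \lra N$ with $\crit(k) = \lam$ and $N$ transitive of size $\lam$ and $<\lam$-closed in $V$.

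Since $|\ps| < \lam = \crit(k)$, $k$ fixes $\ps$ pointwise and $k(\ps) = \ps$, so $G$ is also $N$-generic. Lift $k$ to $k^+ \colon M[G] \lra N[G]$ via $k^+(\tau[G]) := k(\tau)[G]$; this is well-defined and elementary. Define $N^* := k^+(M^*) = k(\dot{M}^*)[G]$ and $k^* := k^+ \res M^* \colon M^* \lra N^*$. Using the transitivity of $M^*$ and $N^*$, the map $k^*$ is elementary; $\crit(k^*) = \lam$ since $k^+$ is the identity on ordinals below $\lam$ and moves $\lam$; and $|N^*| = \lam$ in $V[G]$, as $N^* \seq N[G]$ and $N[G]$ has size $\lam$ in $V[G]$.

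The main obstacle is verifying that $N^*$ is $<\lam$-closed in $V[G]$. This decomposes into two pieces. First, a standard nice-name argument, using the $<\lam$-closure of $N$ in $V$ together with $|\ps| < \lam$, shows that $N[G]$ is $<\lam$-closed in $V[G]$: any $<\lam$-sequence $\vec{x}$ in $V[G]$ of elements of $N[G]$ has a nice $\ps$-name of size $<\lam$ in $V$ which, after refining each antichain beneath its conditions to decide the corresponding value as a canonical name $\check{n}$ for some $n \in N$, becomes a set of size $<\lam$ of elements of $N$, and so lies in $N$ by the $<\lam$-closure of $N$. Second, the hypothesis $(M^*)^{<\lam} \seq M^*$ holds in $V[G]$ and hence also in the smaller universe $M[G]$ by downward absoluteness, and elementarity of $k^+$ transfers this to $N[G] \models (N^*)^{<\lam} \seq N^*$. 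Combining these, any $\vec{x} \in V[G]$ of length $<\lam$ into $N^*$ lies first in $N[G]$ and then, by the second step applied inside $N[G]$, in $N^*$.
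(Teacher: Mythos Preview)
Your proof is correct and follows essentially the same approach as the paper: take a name for the arbitrary $<\lam$-closed model in the extension, wrap it inside a suitable $<\lam$-closed $M\in V$, apply weak compactness to get $k:M\to N$, lift through $\ps$ using $k(\ps)=\ps$, and verify closure of $k(M^*)$ by combining elementarity with the $<\lam$-closure of $N[G]$ in $V[G]$. One small point you leave implicit: the inference ``$|\ps|<\lam=\crit(k)$ implies $k(\ps)=\ps$'' requires that the elements of $\ps$ have rank below $\lam$, so you should first pass (as the paper does) to an isomorphic copy of $\ps$ lying in $H(\lam)$; otherwise the argument is essentially identical.
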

\begin{proof}
    By passing to a poset on $|\ps|$ isomorphic to $\ps$, we assume that $\ps\in H(\lam)$. Let $\dot{M}'$ be a $\ps$-name for a transitive set of size $\lam$ which satisfies enough of $\zfc$ and which is closed under $<\lam$-sequences. Since $\ps$ is $\lam$-c.c., $\dot{M}'\in H(\lam^+)$. Let $M$ be a transitive set in $V$ of size $\lam$ which satisfies enough of $\zfc$, which is closed under $<\lam$-sequences, and which contains $\dot{M}'$ (it contains $\ps$ since $H(\lam)\seq M$). Since $\lam$ is weakly compact, let $k:M\to N$ be as in Definition \ref{def:WC}. Let $G$ be $V$-generic over $\ps$, and since $k(\ps)=\ps$, lift $k$ to $k:M[G]\to N[G]$. Let $M':=\dot{M}'[G]\in M[G]$. Then the restriction of $k$ to $M'$ is an elementary embedding from $M'$ to $k(M')$. Moreover, as $M'$ is closed under $<\lam$-sequences from $V[G]$, $M[G]$ satisfies this, and hence by the elementarity of $k$, $N[G]$ satisfies that $k(M')$ is closed under $<k(\lam)$-sequences. However, $N[G]$ is closed under $<\lam$-sequences from $V[G]$, since $N$ is closed under $<\lam$-sequences from $V$ and since $\ps$ is $\lam$-c.c. Thus $k(M')$ is in fact closed under $<\lam$-sequences from $V[G]$, and this completes the proof.
\end{proof}

Now we define a guessing function suitable for the case when $\lam$ is weakly compact; our definition includes a slight addition to Hamkins' original definition, as we demand some closure of the target model.

\begin{definition}\label{def:wcld}  Suppose that $\lam$ is weakly compact. A \emph{weakly compact Laver diamond} is a function $\mf{l}:\lam\lra V_\lam$ so that for any $A\in H(\lam^+)$ and any transitive set $M$ of size $\lam$ which satisfies enough of $\zfc$, which contains $A$ and $\mf{l}$ as parameters, and which is closed under $<\lam$-sequences, there is a transitive set $N$ of size $\lam$ and an elementary embedding $k:M\lra N$ so that $\crit(k)=\lam$, $k(\mf{l})(\lam)=A$, and so that $N$ is also closed under $<\lam$-sequences.
\end{definition}

\begin{remark}\label{rmk:ContainsH}
    Note that in the previous definition, $M$ will satisfy enough of $\zfc$ to code every element of $H(\lam^+)\cap M$ as a subset of $\lam$. From this, we can see that if $k:M\lra N$ is as in the above definition, then $M\cap H(\lam^+)\seq N$.
\end{remark}

Hamkins showed in \cite{Hamkins:LaverDiamond} that if $\lam$ is weakly compact, then after forcing with Woodin's fast function forcing $\bb{F}_\lam$ at $\lam$, $\lam$ is still weakly compact, and there is a function $\mf{l}$ which satisfies all of Definition \ref{def:wcld}, except the requirement that $N$ is $<\lam$-closed. Here we provide a variation of Hamkins' proof which includes the closure assumption explicitly as well as additional features which will be useful in later sections when we want to preserve large cardinal properties of $\ka<\lam$.

\begin{definition} Suppose that $\mu$ is inaccessible. Woodin's \emph{fast function forcing} at $\mu$, denoted $\bb{F}_\mu$, consists of partial functions $p:\mu\lra\mu$ of size $<\mu$ so that for all $\ga\in\dom(p)$
\begin{enumerate}
\item $\ga$ is inaccessible;
\item $p[\ga]\seq\ga$;
\item $|p\res\ga|<\ga$.
\end{enumerate}
Conditions are ordered by reverse inclusion.

If $\bar{\mu}<\mu$ is also inaccessible, then $\bb{F}_{[\bar{\mu},\mu)}$ is the variant in which we restrict to conditions with domain contained in $[\bar{\mu},\mu)$, and $\bb{F}_{(\bar{\mu},\mu)}$ is the variant in which we restrict to conditions with domain contained in $(\bar{\mu},\mu)$.
\end{definition}

Note that conditions (1),(2), and (3) together are equivalent to (1) and the statement that $p[\ga]$ is a bounded subset of $\ga$.

There is a natural way to split up the fast function forcing $\bb{F}_\mu$ below a condition $p$ and a point $\ga$ in the domain of $p$. Namely, $p=p_0\cup p_1$, where $p_0:=p\res\ga$ and $p_1:=p\res[\ga,\mu)$. By definition, $p[\ga]\seq\ga$ and $|p\res\ga|<\ga$, and therefore $p_0\in\bb{F}_\ga$. Moreover, $p_1\in\bb{F}_{[\ga,\mu)}$. Applying this to conditions below $p$, we see that $\bb{F}_\mu/p$ is isomorphic to $(\bb{F}_\ga/p_0)\times(\bb{F}_{[\ga,\mu)}/p_1)$. Similarly, in the case where $\bar{\mu}<\mu$, $p\in\bb{F}_{(\bar{\mu},\mu)}$, and $\ga\in\dom(p)$, we see that $\bb{F}_{(\bar{\mu},\mu)}/p$ is isomorphic to $(\bb{F}_{(\bar{\mu},\ga)}/p_0)\times(\bb{F}_{[\ga,\mu)}/p_1)$.

\begin{fact}\label{ffdc} $\bb{F}_{[\ga,\mu)}$ and $\bb{F}_{(\ga,\mu)}$ are both $\ga^+$-directed closed. Moreover, they are both $\mu^+$-c.c. (but not $\mu$-c.c.).
\end{fact}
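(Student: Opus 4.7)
The plan is to verify each of the three claims separately by unpacking the definition, with the regularity of the inaccessible cardinals $\ga$ and $\mu$ doing the main work throughout.

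For the $\ga^+$-directed closure of $\bb{F}_{[\ga,\mu)}$ (the argument for $\bb{F}_{(\ga,\mu)}$ is identical), I would take a directed family $\{p_i:i\in I\}$ with $|I|\leq\ga$ and set $p:=\bigcup_{i\in I}p_i$. Directedness makes $p$ a function, $\dom(p)\seq[\ga,\mu)$ is inherited, and $|\dom(p)|<\mu$ follows from $|I|\leq\ga<\mu$ together with regularity of $\mu$ and the fact that each $|\dom(p_i)|<\mu$. For each $\de\in\dom(p)$, inaccessibility is inherited from any $p_i$ with $\de\in\dom(p_i)$. To verify $p[\de]\seq\de$ and $|p\res\de|<\de$, I would fix some $i_0$ with $\de\in\dom(p_{i_0})$ and, for each $i\in I$, use directedness to produce some $p_j$ below both $p_i$ and $p_{i_0}$; since $p_j$ is a valid condition with $\de\in\dom(p_j)$, conditions (2) and (3) of the definition applied to $p_j$ at $\de$ transfer back to $p_i$. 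Taking unions over $i\in I$ (with $|I|\leq\ga<\de$ when $\de>\ga$, and using that $\de$ is regular) then delivers both claims. The boundary case $\de=\ga$ for the closed interval is vacuous, since $\dom(p)\cap\ga=\es$.

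For the $\mu^+$-chain condition, I would simply count: $|\bb{F}_\mu|\leq\mu^{<\mu}$, and since $\mu$ is inaccessible, $\mu^\eta\leq\mu$ for each $\eta<\mu$ (any $f\colon\eta\to\mu$ is bounded by regularity, and one counts by the bound), so $\mu^{<\mu}=\mu$. Then $|\bb{F}_{[\ga,\mu)}|\leq\mu$ and likewise for $\bb{F}_{(\ga,\mu)}$, so both posets are trivially $\mu^+$-c.c.

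For the failure of the $\mu$-chain condition, I would exhibit an explicit antichain of size $\mu$: pick an inaccessible $\de$ strictly between $\ga$ and $\mu$ (such a $\de$ exists in the settings where this fact is applied, since $\mu$ will be weakly compact and hence Mahlo) and consider the single-point conditions $p_v:=\{(\de,v)\}$ for $v<\mu$. Each $p_v$ is readily seen to be a valid condition in both $\bb{F}_{[\ga,\mu)}$ and $\bb{F}_{(\ga,\mu)}$, and any two distinct $p_v,p_{v'}$ share the domain $\{\de\}$ with different values, hence are incompatible. The only step that requires any real care is the size bound $|\dom(p)\cap\de|<\de$ in the directed-closure argument, where the directedness must be used to reduce from an $|I|$-size union back to a single $p_j$ before one can invoke condition (3) of the definition.
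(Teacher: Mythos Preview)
The paper states this as a \textbf{Fact} without supplying a proof, so there is nothing in the paper to compare against. Your argument is correct and is the standard verification.

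A few minor remarks. First, your detour through directedness to bound $|p_i\res\de|$ is genuinely needed: when $\de\notin\dom(p_i)$ there is no direct reason that $|p_i\res\de|<\de$, so passing to a common extension $p_j$ with $\de\in\dom(p_j)$ is the right move. Second, for the failure of the $\mu$-c.c.\ in $\bb{F}_{[\ga,\mu)}$ you could simply take $\de=\ga$ itself (the conditions $\{(\ga,v)\}$ for $v<\mu$ already form an antichain of size $\mu$), which avoids invoking Mahloness for that poset; your caveat about needing an inaccessible strictly in $(\ga,\mu)$ is only essential for $\bb{F}_{(\ga,\mu)}$, and as you note this is satisfied in every application in the paper since $\mu$ plays the role of a weakly compact $\lam$.
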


Our main goal is to show that $\bb{F}_{(\ka,\lam)}$ forces the existence of a weakly compact Laver diamond on $\lam$, in the sense of Definition \ref{def:wcld}. Before the proof, we have a new lemma about preserving closure:

\begin{lemma}\label{lemma:ffPreservesClosure}
Suppose that $\lam$ is Mahlo and that $M$ is a transitive set of size $\lam$ which is  closed under $<\lam$ sequences and which satisfies enough of $\zfc$. Then for any $\ka<\lam$, $\bb{F}:=\bb{F}_{(\ka,\lam)}$ forces that $M[\dot{G}_{\bb{F}}]$ is closed under $<\lam$ sequences.\footnote{Note that since $\bb{F}$ is not $\lam$-c.c., but only $\lam^+$-c.c., this doesn't follow immediately by citing the standard arguments for preserving closure (see \cite{CummingsHandbook}).}
\end{lemma}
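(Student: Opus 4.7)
The plan is to exploit the splitting property of fast function forcing: below any condition whose domain contains a sufficiently large inaccessible $\ga$, $\bb{F}$ factors as a product of a piece of size $<\lam$ and a $\ga^+$-directed closed tail. This sidesteps the fact that $\bb{F}$ itself is only $\lam^+$-c.c., and reduces the preservation question to two more tractable pieces.

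First I would set up the factorization. Given an $\bb{F}$-name $\dot{f}$ forced to be a $\mu$-sequence of elements of $M[\dot{G}_{\bb{F}}]$ with $\mu<\lam$, and any $p \in \bb{F}$, use the Mahloness of $\lam$ to pick an inaccessible $\ga$ larger than $\mu$, $\ka$, and all the data of $p$ (so that $p^* := p \cup \{(\ga,0)\}$ is a valid condition in $\bb{F}$). Below $p^*$, the factorization described in the excerpt yields $\bb{F}/p^* \cong \bb{A} \times \bb{B}$, where $\bb{A}:=\bb{F}_{(\ka,\ga)}/(p^*\res\ga)$ has size $\leq \ga < \lam$ and $\bb{B}:=\bb{F}_{[\ga,\lam)}/(p^*\res[\ga,\lam))$ is $\ga^+$-directed closed by Fact~\ref{ffdc}. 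By density it suffices to show $\bb{A}\times\bb{B}$ forces $\dot{f}\in M[\dot{G}]$.

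In the first step I force with $\bb{A}$ over $V$ to obtain $V[G_0]$, and argue $M[G_0]$ is $<\lam$-closed in $V[G_0]$. Given a $\mu$-sequence $h: \mu \to M[G_0]$ in $V[G_0]$, each $h(\alpha)$ has an $M$-name $\dot{y}_\alpha \in M$. A nice $\bb{A}$-name $\dot{F}$ for the function $\alpha\mapsto\dot{y}_\alpha$ is a set of ordered pairs all of whose components lie in $M$ (since $\bb{A}\in M$ and $M$-names have canonical checks in $M$ by transitivity), and of total size at most $\mu\cdot|\bb{A}|<\lam$ using the $\lam$-c.c.\ of $\bb{A}$. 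Hence $\dot{F}\in M$ by the $<\lam$-closure of $M$, and $h$ is then recoverable inside $M[G_0]$ from $\dot{F}[G_0]$ and $G_0$.

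In the second step I force with $\bb{B}$ over $V[G_0]$. By Lemma~\ref{lemma:distributiveEaston}, since $\bb{A}$ is $\ga^+$-c.c.\ and $\bb{B}$ is $\ga^+$-directed closed (hence $\ga^+$-distributive) in $V$, $\bb{B}$ remains $\ga^+$-distributive in $V[G_0]$. Now given $h:\mu\to M[G_0][G_1]$ in $V[G_0][G_1]$, for each $\alpha<\mu$ choose (using AC in $V[G]$) a $\bb{B}$-name $\dot{z}_\alpha \in M[G_0]$ with $\dot{z}_\alpha[G_1]=h(\alpha)$. The sequence $\la \dot{z}_\alpha:\alpha<\mu\ra \in V[G_0][G_1]$ has values in $M[G_0] \seq V[G_0]$; since $\mu < \ga$, $\ga^+$-distributivity of $\bb{B}$ in $V[G_0]$ puts it in $V[G_0]$, and the first step's $<\lam$-closure of $M[G_0]$ then puts it in $M[G_0]$. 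Evaluating at $G_1$ inside $M[G_0][G_1]$ recovers $h$. The main obstacle is the coherence problem in this second step: the natural sequence $\la\dot{z}_\alpha\ra$ is picked in $V[G]$, but we need it in $M[G_0]$. The key is that its values lie in $V[G_0]$, so $\ga^+$-distributivity first pushes the whole sequence back into $V[G_0]$, after which the closure of $M[G_0]$ takes it the rest of the way.
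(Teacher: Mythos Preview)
Your proof is correct and follows essentially the same approach as the paper: factor $\bb{F}$ below a condition with a large inaccessible in its domain into a small piece $\bb{A}$ and a $\ga^+$-closed tail $\bb{B}$, use Easton's lemma to get $\ga^+$-distributivity of $\bb{B}$ over $V[G_0]$, and use the $\lam$-c.c.\ of $\bb{A}$ to preserve $<\lam$-closure of $M$. The paper streamlines this slightly by first reducing to $<\lam$-sequences of ordinals (so that the target sequence already lies in $V[G_0]$ by distributivity, after which a single appeal to the standard $\lam$-c.c.\ closure-preservation fact finishes), whereas you keep the argument at the level of arbitrary sequences and spell out the two steps separately; but the content is the same.
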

\begin{proof}
It suffices to show that $\bb{F}$ forces that $M[\dot{G}_{\bb{F}}]$ is closed under $<\lam$-sequences of ordinals, since $M[\dot{G}_{\bb{F}}]$ is forced to satisfy a fragment of $\zfc$ which is sufficiently large to code every set as a set of ordinals. Let $\rho:=M\cap\text{Ord}=M\cap\lam^+\in\lam^+$.

Fix a condition $p$, as well as an ordinal $\nu<\lam$ and an $\bb{F}$-name $\dot{g}$ for a function from $\nu$ to $\rho$. We find an extension of $p$ which forces that $\dot{g}\in M[\dot{G}_{\bb{F}}]$.

Since $|p|<\lam$, the range of $p$ is bounded below $\lam$. Since $\lam$ is Mahlo, let $\ga<\lam$ be an inaccessible above both $\sup(\ran(p))$ and $\nu$, and then set\footnote{The value of $q(\ga)$ is not important, but the fact that $\ga\in\dom(q)$ is important, in light of condition (2) in the definition of $\bb{F}$.} $q:=p\cup\lb\la\ga,\ga\ra\rb$. For any $r\leq q$, let $r_0$ denote $r\res\ga$, and let $r_1$ denote $r\res[\ga,\lam)$. Note that $\bb{F}/q$ is isomorphic to the product $\bb{F}_{(\ka,\ga)}/q_0\times\bb{F}_{[\ga,\lam)}/q_1$.

Let $G_0\times G_1$ be generic for $\bb{F}/q$ (and hence for $\bb{F}$), and let $g:=\dot{g}[G_0\times G_1]$. Since $\bb{F}_{[\ga,\lam)}$ is $\ga^+$-closed in $V$ and $\bb{F}_{(\ka,\ga)}$ is $\ga^+$-c.c. in $V$ (see Fact \ref{ffdc}), Easton's lemma implies that $\bb{F}_{[\ga,\lam)}$ is $\ga^+$-distributive in $V[G_0]$. Since $\dom(g)=\nu<\ga$, $g$ is a member of $V[G_0]$. Let $\dot{h}$ be an $\bb{F}_{(\ka,\ga)}$-name for a function from $\nu$ to $\rho$ so that $\dot{h}[G_0]=g$. Let $r$ be an extension of $q$ which forces in $\bb{F}/q$ that $\dot{h}[\dot{G}_0]=\dot{g}$.

Recalling that $\bb{F}_{(\ka,\ga)}$ is $\ga^+$-c.c., we have that it is certainly $\lam$-c.c. Thus $\bb{F}_{(\ka,\ga)}$ forces that $M[\dot{G}_0]$ is closed under $<\lam$-sequences. We may therefore find an extension $u$ of $r\res\ga$ in $\bb{F}_{(\ka,\ga)}$ so that $u$ forces that $\dot{h}\in M[\dot{G}_0]$.

To finish, let $u^*$ be the $\bb{F}$-condition $u\cup (r\res[\ga,\lam))$, a condition which extends $r$. Then $u^*$ forces that $\dot{h}[\dot{G}_0]=\dot{g}$, and $u^*$ therefore forces that $\dot{g}\in M[\dot{G}_0]\seq M[\dot{G}_0\times\dot{G}_1]$. This completes the proof.
\end{proof}

The proof of Lemma \ref{lemma:preserveWC} is a prototype for the proof of the next result:

\begin{lemma}\label{lemma:wcld} (Hamkins \cite{Hamkins:LaverDiamond}, with some modifications) Suppose that $\ka$ is inaccessible and that $\lam>\ka$ is weakly compact. Then after forcing with $\bb{F}_{(\ka,\lam)}$ $\lam$ is weakly compact, and there exists a weakly compact Laver diamond (as in Definition \ref{def:wcld}) for $\lam$.
\end{lemma}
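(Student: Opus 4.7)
The plan is to follow Hamkins' construction with extra care for the $<\lam$-closure requirement in Definition \ref{def:wcld}. Throughout, let $f := \bigcup G$ denote the generic fast function. First, I define $\mf{l}$ in $V[G]$. In $V$, for each inaccessible $\gamma<\lam$, fix a canonical enumeration $\vec{\dot{x}}^\gamma = \la \dot{x}^\gamma_\xi : \xi < \gamma^+\ra$ of all nice $\bb{F}_{(\ka, \gamma)}$-names for subsets of $\gamma$. Set $\mf{l}(\gamma) := \mathrm{decode}(\dot{x}^\gamma_{f(\gamma)}[G \cap \bb{F}_{(\ka, \gamma)}])$ whenever $\gamma \in \dom(f)$ is inaccessible with $f(\gamma) < \gamma^+$, and $\mf{l}(\gamma) := \es$ otherwise; here $\mathrm{decode}$ is a fixed absolute formula recovering an element of $H(\gamma^+)$ from a subset of $\gamma$ coding its transitive closure.

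To verify the diamond property, fix $A \in H(\lam^+)^{V[G]}$ and a transitive $<\lam$-closed $M' \ni A, \mf{l}$ of size $\lam$ in $V[G]$. In $V[G]$, code $A$ as a subset $E_A \seq \lam$ with $\mathrm{decode}(E_A) = A$, and let $\dot{E}$ be a nice $\bb{F}$-name for $E_A$. The key observation is that $\dot{E} \seq \bb{F} \times \lam \seq V_\lam$, so any elementary embedding with critical point $\lam$ fixes $\dot{E}$ as a set. Let $\dot{M}'$ be a nice name for $M'$. Build a transitive $<\lam$-closed $M \in V$ of size $\lam$ containing $\dot{M}', \dot{E}, \vec{\dot{x}}$, and the other relevant parameters. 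Apply the weak compactness of $\lam$ in $V$ to obtain $k_0 : M \to N_0$ with $\crit(k_0) = \lam$ and $N_0$ transitive, of size $\lam$, and $<\lam$-closed. Since $\dot{E}$ is fixed by $k_0$, we have $\dot{E} \in N_0$; let $\delta_E$ be its index in $N_0$'s enumeration $k_0(\vec{\dot{x}})_\lam$ of $\bb{F}$-nice names for subsets of $\lam$.

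Now I lift $k_0$ through the forcing. Since $\crit(k_0) = \lam$, $k_0(\bb{F}) = \bb{F}_{(\ka, k_0(\lam))}^{N_0}$ with $\bb{F}$ sitting as the regular subposet of conditions supported below $\lam$. Set $p_* := \{(\lam, \delta_E)\}$, a valid condition in $\bb{F}_{[\lam, k_0(\lam))}^{N_0}$. Construct $H \in V[G]$ generic over $N_0$ for $\bb{F}_{[\lam, k_0(\lam))}^{N_0}$ below $p_*$ by enumerating in $V$ the $\lam$-many dense subsets of this poset in $N_0$ and diagonalizing, using that $\bb{F}_{[\lam, k_0(\lam))}^{N_0}$ remains $\lam$-closed in $V[G]$ by the syntactic ``union of $<\lam$-many pieces of size $<\lam$'' description of its conditions, combined with the closure-preservation technique of Lemma \ref{lemma:ffPreservesClosure}. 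Set $G^* := G \cup H$, generic for $k_0(\bb{F})$ over $N_0$, and lift $k_0$ to an elementary $k : M[G \cap M] \to N_0[G^*]$ by the standard name-preimage argument.

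To verify $k(\mf{l})(\lam) = A$, unfold the lifted definition: in $N_0[G^*]$, $k(\mf{l})(\lam) = \mathrm{decode}^{N_0[G^*]}((k_0(\vec{\dot{x}})_\lam)_{f^*(\lam)}[G^* \cap \bb{F}])$ where $f^* := \bigcup G^*$. Since $f^*(\lam) = \delta_E$ by the master condition $p_*$, the indexed name is $\dot{E}$, and $G^* \cap \bb{F} = G$, this yields $k(\mf{l})(\lam) = \mathrm{decode}(\dot{E}[G]) = \mathrm{decode}(E_A) = A$ as required. The $<\lam$-closure of $N_0[G^*]$ in $V[G]$ follows by adapting Lemma \ref{lemma:ffPreservesClosure} to $k_0(\bb{F})$ over $N_0$: any $<\lam$-sequence in $V[G]$ of elements of $N_0[G^*]$ can be written in terms of $N_0$-names; $\lam$-closure of the tail $\bb{F}_{[\lam, k_0(\lam))}^{N_0}$ together with the distributivity argument of Lemma \ref{lemma:ffPreservesClosure} then places the sequence back into $N_0[G^*]$. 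Finally, the weak compactness of $\lam$ in $V[G]$ is an immediate consequence of the existence of $\mf{l}$, obtained by applying the diamond property with $A = \es$ to extract a weakly compact embedding for any given suitable $M'$. The main technical obstacles are the construction of $H$ in $V[G]$ and the verification of the $<\lam$-closure of $N_0[G^*]$, both of which require extensions of the factoring technique of Lemma \ref{lemma:ffPreservesClosure}.
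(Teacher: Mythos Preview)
Your approach is essentially Hamkins' construction, which is also what the paper does. The coding is cosmetically different: you enumerate nice names for subsets at each inaccessible level and decode, while the paper fixes a single enumeration $\vec{a}=\la a_\xi:\xi<\lam\ra$ of $V_\lam$ with the property that each $V_\ga$ (for $\ga$ inaccessible) is listed by the first $\ga$ terms, and sets $\mf{l}(\nu)=a_{f(\nu)}[f\res\nu]$ when this makes sense.

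Your treatment of the tail generic $H$ and of the closure of $N_0[G^*]$ is muddled in a way that matters. You write that $H$ is built ``in $V[G]$'' using that the tail poset ``remains $\lam$-closed in $V[G]$''; that closure claim is neither justified nor needed. The dense subsets of $\bb{F}_{[\lam,k_0(\lam))}^{N_0}$ you must meet all lie in $N_0\subset V$, and the poset is $\lam$-closed in $V$ because $N_0$ is $<\lam$-closed in $V$; hence the diagonalization takes place entirely in $V$ and produces $H\in V$. This is precisely what the paper does, and it is the point that makes the closure argument clean: once $H\in V$, the structure $N_0[H]$ is a subset of $V$ and is $<\lam$-closed in $V$, so Lemma \ref{lemma:ffPreservesClosure} applies directly to $N_0[H]$ and the forcing $\bb{F}$, yielding that $N_0[H][G]=N_0[G^*]$ is $<\lam$-closed in $V[G]$. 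Your sketch (``$\lam$-closure of the tail together with the distributivity argument'') does not track this and would be hard to make precise if $H$ were genuinely outside $V$. Finally, you should state explicitly that the embedding witnessing the diamond for $M'$ is $k\res M':M'\to k(M')$, and that $k(M')$ is $<\lam$-closed in $V[G]$; this follows because $N_0[G^*]$ believes $k(M')$ is $<k_0(\lam)$-closed (by elementarity from the assumption on $M'$) and $N_0[G^*]$ is itself $<\lam$-closed in $V[G]$.
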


\begin{proof} Let $\bb{F}:=\bb{F}_{(\ka,\lam)}$.
The proof has three stages: first we show how to lift weakly compact embeddings $k:M\lra N$ for $M,N$, and $k$ in $V$. Then we show, given some $M'$ in $V[\bb{F}]$, how to create a weakly compact embedding $k':M'\lra N'$. Finally, we show how to massage the generic fast function into the desired guessing function.

Let $k:M\lra N$ be a weakly compact embedding with $\crit(k)=\lam$, and let $f$ be the generic fast function. We show how to lift $k$. For each $\al<k(\lam)$, let $p_\al:=\lb\la\lam,\al\ra\rb$, a condition in $k(\bb{F})$. Then $k(\bb{F})/p_\al$ factors in $N$ as $\bb{F}\times(\bb{F}_{[\lam,k(\lam))}/p_\al)$. The poset $\bb{F}_{[\lam,k(\lam))}/p_\al$ is $\lam$-closed in $V$, since it is $\lam$-closed in $N$ and since $N$ is closed under $<\lam$-sequences. Moreover, it has $\lam$-many dense sets in $N$, since $|N|=\lam$. Thus we may build an $N$-generic $f_{\text{tail}}$ for $\bb{F}_{[\lam,k(\lam))}/p_\al$. Since $f$ is $V$-generic over $\bb{F}$ and $N[f_{\text{tail}}]\subset V$, $f$ is still $\bb{F}$-generic over $N[f_{\text{tail}}]$. Thus $f\cup f_{\text{tail}}$ is $N$-generic for $k(\bb{F})$. Since $k[f]=f$, we may lift $k$ to $k:M[f]\lra N[k(f)]$, where $k(f)=f\cup f_{\text{tail}}$.

Now suppose that $\dot{M}'$ is an $\bb{F}$-name for a transitive set of size $\lam$ which satisfies enough of $\zfc$ and which is closed under $<\lam$-sequences. Since $\bb{F}$ is $\lam^+$-c.c., $\dot{M}'\in H(\lam^+)$. Now let $M$ be a transitive set of size $\lam$ in $V$ with $\dot{M}'\in M$, which satisfies enough $\zfc$, and which is closed under $<\lam$-sequences. Let $k:M\lra N$ be a weakly compact embedding. As in the previous paragraph, in $V[f]$, we lift $k$ to $k:M[f]\lra N[k(f)]$, where $k(f)=f\cup f_{\text{tail}}$, and where $f_{\text{tail}}$ is a filter in $V$ which is $N$-generic over $\bb{F}_{[\lam,k(\lam))}$. Then $M':=\dot{M}'[f]\in M[f]$, and $k\res M'$ is an elementary embedding from $M'$ to $k(M')$. Since $M'$ is $<\lam$-closed in $V[f]$ (by assumption), we certainly have that $M[f]\models ``M'\text{ is closed under }<\lam\text{-sequences}"$. So $N[k(f)]$ satisfies that $k(M')$ is closed under $<k(\lam)$-sequences, and hence closed under $<\lam$-sequences. To finish showing that $k(M')$ is actually closed under $<\lam$-sequences from $V[f]$, we check that $N[k(f)]$ is closed under $<\lam$-sequences from $V[f]$. Indeed, $N$ is closed under $<\lam$-sequences of ordinals from $V$, and since $N[f_{\text{tail}}]\seq V$ has the same ordinals as $N$, $N[f_{\text{tail}}]$ is also closed under $<\lam$-sequences of ordinals from $V$. By Lemma \ref{lemma:ffPreservesClosure}, $N[f_{\text{tail}}][f]=N[k(f)]$ is closed under $<\lam$-sequences of ordinals from $V[f]$. And finally, since $N[k(f)]$ satisfies enough of the Axiom of Choice, it is closed under $<\lam$-sequences from $V[f]$. Thus we know that $\lam$ is still weakly compact in the extension.

We finish by showing how to obtain the weakly compact Laver diamond from the generic fast function $f$. In $V$, let $\vec{a}=\la a_\xi:\xi<\lam\ra$ be a well-ordering of $V_\lam$ so that for all inaccessible $\ga<\lam$, $V_\ga$ is enumerated by $\la a_\xi:\xi<\ga\ra$. Working now in $V[f]$, we define $\mf{l}$. Given $\nu<\lam$, we set $\mf{l}(\nu)=\es$ unless the following conditions are met: 
\begin{itemize}
\item $\nu>\ka$ is inaccessible,
\item $a_{f(\nu)}$ is an $\bb{F}_{(\ka,\nu)}$-name, and 
\item $f\res\nu$ is $V$-generic over $\bb{F}_{(\ka,\nu)}$. 
\end{itemize}
If these conditions are met, we set $\mf{l}(\nu)=a_{f(\nu)}[f\res\nu]$. 

We claim that $\mf{l}$ is a weakly compact Laver diamond. Thus let $A\in H^{V[f]}(\lam^+)$, noting that $A=\dot{A}[f]$ for some $\dot{A}\in H^V(\lam^+)$. Let $M$ be a transitive set of size $\lam$ satisfying enough of $\zfc$ so that $\dot{A}$ and $\vec{a}$ are in $M$ and so that $M$ is $<\lam$-closed. Additionally, let $k:M\lra N$ be a weakly compact embedding. Note that $\dot{A}\in N$, since $\dot{A}\in M\cap H(\lam^+)$ (see Remark \ref{rmk:ContainsH}), and therefore we may let $\be<k(\lam)$ so that $\dot{A}=k(\vec{a})(\be)$.  As shown earlier in the proof, we may lift $k$ to $k:M[f]\lra N[k(f)]$ where $k(f)$ is such that $k(f)(\lam)=\be$. Noting that $\mf{l}\in M[f]$ (being definable from $\vec{a}$ and $f$), we see that 
$$
k(\mf{l})(\lam)=k(\vec{a})(k(f)(\lam))[k(f)\res\lam]=k(\vec{a})(\be)[f]=\dot{A}[f]=A.
$$
This shows that $\mf{l}$ satisfies the desired property with respect to extensions of such $M$ coming from $V$. The full result, which applies to all appropriate $M'$ in $V[f]$, is obtained as in the previous paragraph.
\end{proof}

\subsection{A Summary of Some Results about Club Adding and Guessing Functions.}\hfill

We end this section by collecting a few more results about club adding after versions of Mitchell forcing built relative to a guessing function. The first item of the next fact summarizes Theorem 1.2 of \cite{GLS:8fold}.

\begin{fact}\label{fact:GLS} Let $\ka$ be a regular cardinal with $\ka^{<\ka}=\ka$, and fix a weakly compact cardinal $\lam>\ka$. Let $\mf{l}$ be a weakly compact Laver diamond on $\lam$.
\begin{enumerate}
    \item There is an $\bb{M}^*_{\mf{l}}(\ka,\lam)$-name $\dot{\ps}_{\lam^+}$ for a standard club adding iteration of length $\lam^+$ so that $\bb{M}^*_{\mf{l}}(\ka,\lam)\ast\dot{\ps}_{\lam^+}$ forces $\neg\AP(\lam)\we\CSR(\lam)$.
    \item An analogous claim to item (1) holds, with respect to the Mitchell forcing $\bb{M}_{\mf{l}}(\ka,\lam)$, to obtain the configuration $\AP(\lam)\we\CSR(\lam)$.\footnote{Note that while this is not the exact poset used in \cite{GLS:8fold} to obtain this configuration (which corresponds to Theorem 1.1 of \cite{GLS:8fold}), the proof that it does satisfy this configuration is a straightforward modification of the proof of Theorem 1.1 of \cite{GLS:8fold}.}
\end{enumerate}
Moreover, in all of the above cases, the forcings preserve all cardinals $\leq\ka^+$, all cardinals $\geq\lam$, and force that $\lam=\ka^{++}$. Finally, the club adding posets are forced by the corresponding Mitchell posets to be $\ka^+$-directed closed and $\lam$-distributive.
\end{fact}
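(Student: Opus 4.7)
The plan is, working in $V^{\bb{M}}$ with $\bb{M}$ denoting either $\bb{M}^*_{\mf{l}}$ or $\bb{M}_{\mf{l}}$ (as appropriate to produce $\neg\AP(\lam)$ or $\AP(\lam)$), to define $\dot{\ps}_{\lam^+}$ as a standard club adding iteration of length $\lam^+$ with supports of size $<\ka$. I fix in advance an enumeration $\la \dot{S}_\alpha : \alpha<\lam^+\ra$ of all names for stationary subsets of $\lam\cap\cof(<\ka^+)$ in the final model, listing each such name cofinally often; this bookkeeping is available because $\bb{M}$ is $\lam$-c.c.\ of size $\lam$, and so forces $|H(\lam^+)|=\lam^+$. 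At stage $\alpha$, if the set of reflection points of $\dot{S}_\alpha$ of cofinality $\ka^+$ is stationary in the $\alpha$-th intermediate model, I force with $\CU(C_\alpha)$ for a club $C_\alpha$ contained in that set; otherwise, the $\alpha$-th iterand is trivial.

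Cardinal preservation decomposes as follows. The Mitchell forcing $\bb{M}$ preserves all cardinals $\leq\ka^+$ and $\geq\lam$ and forces $\lam=\ka^{++}$, via its projection structure and $\lam$-c.c.\ from Lemma \ref{lemma:tailprojections}(1) together with its internal collapses at points of $A$ (resp.\ $A^*$). The iteration $\ps_{\lam^+}$ is $\lam^+$-c.c.\ by Fact \ref{iterationchaincondition} and is $\ka^+$-preserving since each iterand is $\ka$-strategically closed and we take $<\ka$-supports. Preservation of the remaining cardinal $\lam$ (and hence of all cardinals), as well as preservation of $\AP(\lam)$ or $\neg\AP(\lam)$ (both of which are already guaranteed in $V^{\bb{M}}$ by the displayed fact following Definition \ref{def:MGuessNotAP}), follow from the $\lam$-distributivity of $\ps_{\lam^+}$ over $V^{\bb{M}}$, since $\AP$ is witnessed by a $\lam$-sequence. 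For $\CSR(\lam)$ in the final extension: given a stationary $S\seq \lam\cap\cof(<\ka^+)$, the $\lam$-distributivity of the tail together with Fact \ref{nicenamestrick} exhibits $S$ as $\dot{S}_\alpha[G_\alpha]$ for some stage $\alpha$ of the bookkeeping at which a club of reflection points of cofinality $\ka^+$ was shot through $S$, yielding $\CSR(\lam)$.

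The main obstacle, and the only substantive point, is the $\lam$-distributivity of $\ps_{\lam^+}$. This I would prove by a branch-lemma argument orchestrated through the weakly compact Laver diamond $\mf{l}$. Given an $\bb{M}\ast\dot{\ps}_{\lam^+}$-name $\dot{f}$ for a function $\lam\to\mathrm{Ord}$, I first localize $\dot{f}$ to some $\ps_\alpha$ with $\alpha<\lam^+$ and then aim to show $\dot{f}$ is captured in $V^{\bb{M}}$. The idea is to choose a weakly compact embedding $k\colon M\to N$ with $\crit(k)=\lam$ whose $\mf{l}$-value $k(\mf{l})(\lam)$ names an appropriate $\ka^+$-directed closed forcing that absorbs the relevant portion of the club-adding iteration up to stage $\alpha$. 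By Lemma \ref{lemma:tailprojections}(2) applied on the $N$-side, $\bb{M}$ then factors densely through $(\bb{M}\res\lam)\ast k(\mf{l})(\lam)\ast\dot{\bb{N}}$; lifting $k$ through $\bb{M}$ and splitting the tail further into its Cohen component $\Add(\ka,[\lam,k(\lam)))$ and its $\ka^+$-closed term-ordering component, a standard branch-lemma dichotomy forces $\dot{f}$ to already belong to $V^{\bb{M}}$. This is essentially the argument of Theorem 1.2 of \cite{GLS:8fold}, whose intricate name-counting and branch-capturing steps I would import directly; the only change between items (1) and (2) of the fact is whether one starts with $\bb{M}^*_{\mf{l}}$ or $\bb{M}_{\mf{l}}$, and the branch-lemma dichotomy is robust to this swap.
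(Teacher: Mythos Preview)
The paper does not give a proof of this statement; it is recorded as a Fact summarizing Theorems~1.1 and~1.2 of \cite{GLS:8fold}, so there is no in-paper argument to compare against. Your outline is a reasonable sketch of the cited argument and correctly isolates $\lam$-distributivity of the club-adding iteration as the crux, to be established by lifting weakly compact embeddings guided by the Laver diamond $\mf{l}$ (exactly as the paper later exploits in Lemma~\ref{lemma:ce}).

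Two imprecisions are worth flagging. First, by Definition~\ref{def:SCAI} instantiated with $\lam$ in the role of its ``$\ka$'', the iteration $\ps_{\lam^+}$ has supports of size $<\lam$, not $<\ka$; this matters for the $\lam^+$-c.c.\ (Fact~\ref{iterationchaincondition}) and for closure. Second, your description of the iterands is off: one does not force with $\CU(C_\al)$ for a club $C_\al$ already contained in the reflection points. Rather, one sets $X_\al = \{\de\in\lam\cap\cof(\ka^+) : S_\al\text{ reflects at }\de\}\cup(\lam\cap\cof(<\ka^+))$ and forces with $\CU(X_\al)$; including $\cof(<\ka^+)$ is what makes $\CU(X_\al)$ $\ka^+$-closed and is needed so that the generic club is compatible with cofinality structure while witnessing $\CSR$. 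Finally, calling the distributivity step a ``branch-lemma dichotomy'' is slightly misleading terminology---that phrase usually refers to the tree-property arguments of Section~\ref{sec:wc}, whereas the $\lam$-distributivity in \cite{GLS:8fold} runs through strategic closure, established via the lifted embedding and the master condition $\dot{q}^\al$ of Lemma~\ref{lemma:ce}. The overall architecture you describe is nonetheless the right one.
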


The remaining items in this section are concerned with the interactions between the Mitchell-type posets from Fact \ref{fact:GLS} and weakly compact embeddings for a weakly compact cardinal $\lam$. The first result shows that for each $\al<\lam^+$, we can build a parameter $P$ so that if $k:M\lra N$ is a weakly compact embedding and $P\in M$, then $k\res\al\in N$. This was a necessary ingredient in the construction of master conditions for lifting embeddings in \cite{GLS:8fold}, which we are making use of in this paper.

\begin{lemma}\label{lemma:parameter}
Suppose that $\lam$ is weakly compact and that $\al<\lam^+$. Let $\vec{M}^\al=\la M_\ga:\ga<\lam\ra$ be a continuous, increasing sequence of elementary submodels of $H(\lam^+)$ of size $<\lam$, each of which contains $\al$ as an element. Suppose that $k:M\lra N$ is a weakly compact embedding with $\crit(k)=\lam$, and that $\vec{M}^\al\in M$. Then $k\res\al\in N$. 
\end{lemma}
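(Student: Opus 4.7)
The plan is to push $\vec{M}^\al$ forward by $k$ and then exploit continuity at $\lam$ together with $\crit(k)=\lam$ to extract $k\res\al$ from data already living in $N$.

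First, since $\vec{M}^\al\in M$, elementarity gives $k(\vec{M}^\al)\in N$. Let $M^*:=k(\vec{M}^\al)_\lam$, which is then an element of $N$. Because $\vec{M}^\al$ is continuous, and because $\crit(k)=\lam$ fixes each index $\ga<\lam$ (so that the $\ga$-th entry of $k(\vec{M}^\al)$ equals $k(M_\ga)$), we obtain $M^*=\bigcup_{\ga<\lam}k(M_\ga)\in N$.

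Next, I would invoke the standing assumption that each $M_\ga$ has size $<\lam$, as is natural for the internally approachable chains used in our applications. A routine enumeration argument then shows $k(M_\ga)=k[M_\ga]$ for every $\ga<\lam$ (since the enumerating map of $M_\ga$ lives in $M$ and has length $<\crit(k)$), and hence $M^*=k[M_\lam]$ where $M_\lam:=\bigcup_{\ga<\lam}M_\ga$. Under the mild additional hypothesis $\al\seq M_\lam$ (easily arranged: each $M_\ga$ contains $\al$ along with a bijection $|\al|\to\al$, and internal approachability then forces $\al\seq M_\lam$), one reads off $k[\al]=M^*\cap k(\al)\cap\mathrm{Ord}\in N$.

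Finally, I would extract $k\res\al$ itself from $k[\al]$ using that $k$ is strictly order-preserving on ordinals. The increasing enumeration of $k[\al]$ is a function with domain $\ot(k[\al])=\al$ whose $\be$-th value is $k(\be)$, so this enumeration coincides with $k\res\al$. Since $\al\in N$ (as $\al<k(\al)\in N$ and $N$ is transitive), the enumeration is definable inside $N$, and we conclude $k\res\al\in N$.

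The main subtle point will be ensuring the standing conventions on $\vec{M}^\al$, namely $|M_\ga|<\lam$ and $\al\seq M_\lam$, are in force; without them the argument breaks (either $k(M_\ga)\neq k[M_\ga]$ or $k[\al]\not\seq M^*$), and $k\res\al$ is no longer recoverable from $M^*$ alone. Modulo these conventions, the proof is simply a bookkeeping exercise in elementarity and continuity.
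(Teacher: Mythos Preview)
Your approach is essentially the same as the paper's: push $\vec{M}^\al$ forward by $k$, use continuity at $\lam$ together with $\crit(k)=\lam$ to identify $k(\vec{M}^\al)(\lam)$ with $k[M_\lam]$ (where $M_\lam=\bigcup_{\ga<\lam}M_\ga$), and then extract $k\res\al$ from this object in $N$. You are also right that the lemma as stated is silent about $|M_\ga|<\lam$ and $\lam\seq M_\lam$; the paper tacitly uses these as well (its ``pass to a club where $M_\ga\cap\lam\in\lam$'' step presupposes the first, and its transitivity argument uses the second).

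The only noteworthy difference is in the final extraction. You recover $k\res\al$ by intersecting $k[M_\lam]$ with $k(\al)$ to get $k[\al]$ and then taking the increasing enumeration. The paper instead observes that $M_\lam$ is \emph{transitive}: any $X\in M_\lam$ lies in some $M_\de$, which by elementarity contains a surjection $\lam\to X$, and since $\lam\seq M_\lam$ this gives $X\seq M_\lam$. Hence $k^{-1}\res k[M_\lam]$ is precisely the transitive collapse map of $k[M_\lam]$, which $N$ can compute internally; inverting it yields $k\res M_\lam\in N$, and in particular $k\res\al\in N$. This transitive-collapse route has the small advantage that it dispenses with your separate verification of $\al\seq M_\lam$ (transitivity of $M_\lam$ plus $\al\in M_0$ gives it for free) and avoids needing to locate $k(\al)$ in $N$. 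But both finishes are correct and equally routine once $k[M_\lam]\in N$ is in hand.
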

\begin{proof}
Let $\vec{M}$ abbreviate $\vec{M}^\al$. By passing to a club of indices in $\lam$, we may assume that $M_\ga\cap\lam\in\lam$ for all $\ga<\ka$. Let $M'_\lam:=k(\vec{M})(\lam)$, and note that $M'_\lam=\bigcup_{\ga<\lam}k[M_\ga]$ by the elementarity of $k$ and the continuity of $k(\vec{M})$. We next observe that $\bigcup_{\ga<\lam}M_\ga$ is transitive: given any set $X$ in this union, $X$ is a member of some $M_\de$, with $\de<\lam$. Moreover, $M_\de$ contains a surjection $\vp_X$ from $\lam$ onto $X$, since $X\in H(\lam^+)$. Since $\lam\seq\bigcup_{\ga<\lam}M_\ga$, we see that $X=\ran(\vp_X)$ is also a subset of this union.

In light of this observation, we conclude that the restriction of $k^{-1}$ to $M'_\lam$ is the transitive collapse of $M'_\lam$ (onto $\bigcup_{\ga<\ka}M_\ga$) and hence is a member of $N$. Consequently, $N$ contains the restriction of $k$ to $\bigcup_{\ga<\lam}M_\ga$, and hence the restriction of $k$ to $\al$, since $\al$ is a subset of this union.
\end{proof}

For the next item, we introduce a small variant of some standard notation:

\begin{notation}\label{notation:parallel}
Given a poset $\Q$ and $q\in\Q$, recall that $\Q/q$ consists of all $r\in\Q$ so that $r\leq q$. If $\Q$ is a two-step poset $\Q_0\ast\dot{\Q}_1$ and $\dot{q}$ is a $\Q_0$-name for a condition in $\dot{\Q}_1$, then we use $\Q/\dot{q}$ to abbreviate $\Q/(1_{\Q_0},\dot{q})$.
\end{notation}

The next lemma is where we need the conclusion of Lemma \ref{lemma:parameter}, in order to construct a master condition. Recall Lemma \ref{lemma:tailprojections} for the notation.

\begin{lemma}\label{lemma:ce}
Let $\mf{l}$ be a weakly compact Laver diamond, and let $\bb{M}$ stand for either $\bb{M}^*_{\mf{l}}(\ka,\lam)$ or $\bb{M}_{\mf{l}}(\ka,\lam)$. Let $k:M\lra N$ be a weakly compact embedding with $\crit(k)=\lam$, where $M$ contains the following objects as parameters: $\mf{l}$, an ordinal $\al<\lam^+$, the poset $\bb{M}$, the name $\dot{\ps}_\al$ for the first $\al$-many stages in an iterated club adding to achieve $\CSR(\lam)$ (see Fact \ref{fact:GLS}), and the parameter $\vec{M}^\al$ from Lemma \ref{lemma:parameter}. Suppose additionally that $k(\mf{l})(\lam)=\dot{\ps}_\al$. 

Then there is, in $N$, a $k(\bb{M})$-name $\dot{q}^\al$ for a condition in $k(\dot{\ps}_\al)$ so that for any $G^*$ which is $N$-generic over $k(\bb{M})$ (and recalling Lemma \ref{lemma:tailprojections}), if we factor $G^*$ as $G\ast H_\al\ast G^*_{\text{tail}}$, then $q^\al$ is the greatest lower bound of $k[H_\al]$. In particular, $q^\al$ is a strong master condition for the embedding, and so after forcing below $q^\al$, we may further lift $k$ to have domain $M[G\ast H_\al]$.
\end{lemma}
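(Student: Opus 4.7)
The plan is to construct $\dot{q}^\al$ inside $N$ from two ingredients: the restriction $k\res\al$, guaranteed by Lemma~\ref{lemma:parameter}, and the generic filter $H_\al$ extracted from $G^*$ via the factorization of Lemma~\ref{lemma:tailprojections}. The condition $q^\al$ will be built coordinate-by-coordinate, with the value at each $k(\be)$ being the closure of the generic chain at stage $\be$ together with $\lam$ appended as a new maximum.

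First, since $\al<\lam^+$ and $\vec{M}^\al\in M$, Lemma~\ref{lemma:parameter} gives $k\res\al\in N$. Combined with the fact that $k$ is the identity on $H(\lam)^V$, this lets one compute inside $N$ the $k$-image of any $p\in\ps_\al$: the domain of $k(p)$ is $k[\dom(p)]$, and the value at $k(\be)$ is $k(p(\be))$. Second, the hypothesis $k(\mf{l})(\lam)=\dot{\ps}_\al$ together with Lemma~\ref{lemma:tailprojections}(2) yields a dense embedding of $k(\bb{M})$ into $\bb{M}\ast\dot{\ps}_\al\ast\dot{\bb{N}}[\lam,k(\lam))$, which is exactly the factorization $G^*=G\ast H_\al\ast G^*_{\text{tail}}$ asserted in the statement.

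Next, define $\dot{q}^\al$ in $N$ as the $k(\bb{M})$-name that, under any factored generic $G^*=G\ast H_\al\ast G^*_{\text{tail}}$, specifies the condition in $k(\dot{\ps}_\al)[G^*]$ whose support is $k[\al]$ and whose value at $k(\be)$, for each $\be<\al$, is the closure of the chain $\bigcup\{p(\be)\,:\,p\in H_\al,\,\be\in\dom(p)\}$ with $\lam$ appended as its maximum. The defining formula uses only $k\res\al$ and the canonical name for the $\dot{\ps}_\al$-generic, both of which live in $N$, so $\dot{q}^\al$ is indeed a $k(\bb{M})$-name in $N$; its support $k[\al]$ has size $|\al|\leq\lam<k(\lam)$, so the support-size requirement for $k(\dot{\ps}_\al)$ is satisfied.

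Two verifications remain. Showing $q^\al$ is a legitimate condition is the main technical step: at each coordinate $k(\be)$ one must know that $\lam\in k(\dot{X}_\be)$, so that appending $\lam$ produces a closed bounded subset of the correct underlying set. Here one leans on the specific design of the standard club adding iteration from Fact~\ref{fact:GLS}, which arranges the sets $\dot{X}_\be$ so that $\lam$ belongs to $k(\dot{X}_\be)$ under the elementarity of $k$ (equivalently, the stationary sets being killed do not reflect at the appropriate cofinality-$\lam$ point in $k(\lam)$). Showing that $q^\al$ is the greatest lower bound of $k[H_\al]$ is then routine: for each $p\in H_\al$ and $\be\in\dom(p)$, the $k(\be)$-coordinate of $q^\al$ end-extends $k(p(\be))$ by construction, so $q^\al\leq k(p)$; and any lower bound of $k[H_\al]$ must end-extend the same chain on $k[\al]$, so $q^\al$ is maximally strong. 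Once both verifications are in hand, the ``strong master condition'' conclusion is standard: any $k(\dot{\ps}_\al)$-generic containing $q^\al$ automatically contains $k[H_\al]$, which is exactly the criterion for lifting $k$ to domain $M[G\ast H_\al]$.
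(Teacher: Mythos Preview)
The paper does not actually supply a proof of this lemma; it is stated at the end of Section~3.3 as part of a summary of results imported from \cite{GLS:8fold}, with the preceding sentence indicating only that Lemma~\ref{lemma:parameter} is the ingredient needed to construct the master condition. Your proposal is the standard construction and is correct in outline: use $k\res\al\in N$ (from Lemma~\ref{lemma:parameter}) to define a condition with support $k[\al]$, set the $k(\be)$-coordinate to be (a name for) the generic club $C_\be$ together with $\lam$ as a new top point, and verify recursively that this is a legitimate condition and the greatest lower bound of $k[H_\al]$.

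Two small points of precision are worth flagging. First, the values $p(\be)$ for $p\in H_\al$ are $\ps_\be$-names, not actual closed sets, so the description of $q^\al(k(\be))$ should really be as a $k(\ps_\be)$-name whose interpretation (after restricting the generic appropriately and lifting) is $C_\be\cup\{\lam\}$; the argument that this is a condition is by induction on $\be$, using at each step that $q^\al\res k(\be)$ already forces the right thing. Second, the crucial verification that $\lam\in k(\dot{X}_\be)[G^*]$ is exactly where the specific bookkeeping of the iteration in \cite{GLS:8fold} is used (the sets $\dot{X}_\be$ are chosen so that every point of cofinality $\ge\ka^+$ lies in them, or so that the relevant stationary set does not reflect at $\lam$), and you are right to isolate it as the main technical step even though you defer the details to that reference. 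With those clarifications your argument matches what the cited paper does.
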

\begin{proof}
    See, for example, Claims 4.4 and 4.9 from \cite{GLS:8fold}.
\end{proof}

\section{Obtaining and Preserving Guiding Generics}\label{sec:ggs}

In this section we build our guiding generics and show that our club adding posets preserve the genericity of these guiding generics. Throughout this section, we assume that $V$ is a model of ($\zfc$ plus) the $\gch$ in which $\ka$ is supercompact, and $\lam>\ka$ is the least weakly compact above $\ka$. Next, we fix a supercompactness measure $\cal{U}$ on $P_\ka(\lam)$, letting $N$ denote $\mathrm{Ult}(V,\cal{U})$ and letting $j$ denote the corresponding ultrapower embedding. Finally, let $\bar{U}$ denote the projection of $\cal{U}$ onto a normal measure on $\ka$.

\subsection{Obtaining the Guiding Generics}

In this subsection, we show how to build the desired guiding generics for the later arguments of Section \ref{sec:wc} when we move our constructions down to $\aleph_\om$. The difficulty arises since we will later be working in cases where $M$ is the ultrapower of some $V[G]$ by a normal measure on $\ka$, and when $2^\ka=\ka^{++}$. This implies that $M$ is insufficiently closed to build the guiding generic directly. Following \cite{GS:sch} and \cite{SU:tree}, we will first create a guiding generic over a generic extension $N^*$ of $N$. We then pull this back along the factor map $k$ to get a filter over the corresponding collapse poset in $M$. However, to ensure that this is a guiding \emph{generic}, we need that the factor map $k$ has a high critical point. We will secure this by a preparatory iteration which incorporates many instances of Cohen forcing (as well as versions of the forcing which we wish to do at stage $\ka$), the purpose of which is to supply the flexibility needed to arrange this feature of $k$. There are two circumstances in which we will need these guiding generics corresponding to the $\AP(\lam)$ and $\neg\AP(\lam)$ cases of Theorem \ref{thm:WeaklyCompactCases}.

Since we wish to do, at $\ka$, a version of Mitchell forcing based upon a weakly compact Laver diamond, we will prepare the universe by forcing with many instances of such a forcing below $\ka$. We will need to incorporate Woodin's fast function forcing, in addition to the Mitchell poset and the Cohen poset.

Given a regular cardinal $\mu$ and a weakly compact cardinal $\nu>\mu$, $\bb{F}_{(\mu,\nu)}$ forces that there is a weakly compact Laver diamond $\dot{\mf{l}}_\nu$ on $\nu$, by Lemma \ref{lemma:wcld}. If $\nu$ is the least weakly compact above $\mu$, then we use $\bb{A}(\mu)$ to denote $\Add(\mu,[\nu,\nu^+))$. We also define (recalling Definitions \ref{def:MGuessAP} and \ref{def:MGuessNotAP}) $\bb{B}(\mu)$ to be the poset
$$
\bb{B}(\mu):=\bb{F}_{(\mu,\nu)}\ast(\bb{M}_{\dot{\mf{l}}_\nu}(\mu,\nu)\times\bb{A}(\mu)),
$$
and we define $\bb{B}^*(\mu)$ to be the poset 
$$
\bb{B}^*(\mu):=\bb{F}_{(\mu,\nu)}\ast(\bb{M}^*_{\dot{\mf{l}}_\nu}(\mu,\nu)\times\bb{A}(\mu)).
$$

For each $\al<\ka$, let $\lam_\al$ denote the least weakly compact cardinal above $\al$. Let $Z\in\bar{U}$ so that for each $\be\in Z$, $\be$ is an inaccessible closed under $\al\mapsto\lam_\al$ and $\be$ is $<\lam_\be$-supercompact. We will recursively define Easton support iterations $\bb{E}_{\ka+1}$ and $\bb{E}^*_{\ka+1}$ at stages in $Z\cup\lb\ka\rb$. Given $\be\in Z$, and having defined $\bb{E}_\be$ (respectively $\bb{E}^*_\be)$, we have that $\bb{E}_\be$ (resp. $\bb{E}^*_\be)$ has size $\be<\lam_\be$. Hence $\bb{E}_\be$ (resp. $\bb{E}^*_\be$) preserves that $\lam_\be$ is weakly compact by Lemma \ref{lemma:preserveWC}. We then force with the $\bb{E}_\be$-name (resp. $\bb{E}^*_\be$-name) for $\bb{B}(\be)$ (resp. $\bb{B}^*(\be)$). This completes the definitions. The rest of the argument will be stated for $\bb{E}_{\ka+1}$ since the same argument works for $\bb{E}^*_{\ka+1}$.

We summarize some of the useful facts about $\bb{E}_{\ka+1}$, as well as $j$ and $N$ (which we specified at the beginning of the section).
\begin{itemize}
\item $|j(\ka)|^V=|j(\lam)|^V=|j(\lam^+)|^V=\lam^+$; 
\item $j$ is continuous at $\lam^+$;
\item $\bb{E}_\ka$ is $\ka$-c.c., and $\bb{E}_{\ka+1}$ is $\lam^+$-c.c. (since the fast function forcing at $\lam$ is only $\lam^+$-c.c.);
\item $j(\bb{E}_{\ka+1})$ can be factored as
$$
j(\bb{E}_{\ka+1})=\bb{E}_{\ka+1}\ast\dot{\bb{E}}^N_\text{tail}\ast j(\bb{B}(\ka)),
$$
where $\dot{\bb{E}}^N_\text{tail}$ is forced by $\bb{E}_{\ka+1}$ over $N$ to be $\lam^+$-closed (since the next stage in $j(Z)$ above $\ka$ is greater than $\lam_\ka=\lam$), and where $j(\bb{F}_{(\ka,\lam)}\ast\bb{M}_{\dot{\mf{l}}}(\ka,\lam))$ is forced to be $j(\ka)$-directed closed, using Lemma \ref{ffdc} and clause (5) of Definition \ref{def:MGuessAP}.
\end{itemize}

To begin, let $G_{\ka+1}$ be $V$-generic over $\bb{E}_{\ka+1}$. Since $N$ is closed under $\lam$-sequences from $V$ and $\bb{E}_{\ka+1}$ is $\lam^+$-c.c., $N[G_{\ka+1}]$ is closed under $\lam$-sequences from $V[G_{\ka+1}]$. Since $\bb{E}^N_{\text{tail}}$ is $\lam^+$-closed in $N[G_{\ka+1}]$, and since $\bb{E}^N_{\text{tail}}$ has $j(\ka)$-many antichains in $N[G_{\ka+1}]$, we may work in $V[G_{\ka+1}]$ to build an $N[G_{\ka+1}]$-generic filter, say $H$, over $\bb{E}^N_{\text{tail}}$. This allows a lift $j:V[G_\ka]\lra N[G_{\ka+1}\ast H]$ defined in $V[G_{\ka+1}]$. We abbreviate $G_{\ka+1}\ast H$ by $G_{j(\ka)}$.

Now we work to construct a master condition for the two-step iteration of the fast function forcing and the Mitchell forcing. Let $f\ast (I_0\times I_1)$ be the $V[G_\ka]$-generic filter over $\bb{B}(\ka)$ added by $G_{\ka+1}$; so $I_0$ is generic for the Mitchell forcing and $I_1$ for the Cohen forcing. The model $N[G_{j(\ka)}]$ is still closed under $\lam$-sequences from $V[G_{\ka+1}]$. Thus $j[f\ast I_0]\in N[G_{j(\ka)}]$ since $f\ast I_0$ has size $\lam$. Moreover, $j(\bb{F}_{(\ka,\lam)}\ast\bb{M}_{\dot{\mf{l}}}(\ka,\lam))$ is $j(\ka)$-directed closed. Since $\lam<j(\ka)$, we have that $j[f\ast I_0]$ has a lower bound in $j(\bb{F}_{(\ka,\lam)}\ast\bb{M}_{\dot{\mf{l}}}(\ka,\lam))$. Next, in $N[G_{j(\ka)}]$, $j(\bb{F}_{(\ka,\lam)}\ast\bb{M}_{\dot{\mf{l}}}(\ka,\lam))$ is $j(\lam)^+$-c.c. and has size $j(\lam)$. Thus $N[G_{j(\ka)}]$ satisfies that there are at most $2^{j(\lam)}=j(\lam)^+$-many antichains for this poset. Since $|j(\lam)^+|^V=\lam^+$ and since $\lam^+<j(\ka)$, we may work in $V[G_{\ka+1}]$ to build an $N[G_{j(\ka)}]$-generic filter over $j(\bb{F}_{(\ka,\lam)}\ast\bb{M}_{\dot{\mf{l}}}(\ka,\lam))$ which contains $j[f\ast I_0]$. This allows us to lift $j$ to $j:V[G_{\ka}][f\ast I_0]\to N[G_{j(\ka)}][j(f\ast I_0)]$.

It remains to build the desired $N[G_{j(\ka)}\ast j(f\ast I_0)]$-generic filter $I^*_1$ over $j(\bb{A}(\ka))$. Recalling that our goal is to ensure a high critical point for the factor map, we will represent every $\eta<j(\ka)$ in the form $j(f)(\ka)$ for some $f:\ka\lra\ka$ in $V[G_{\ka+1}]$. Let $\la x_\eta:\eta\in[\lam,\lam^+)\ra$ enumerate $j(\ka)$, and recall that we are viewing conditions in $\Add(\ka,\lam^+\bsl\lam)$ as functions from $(\lam^+\bsl\lam)\times\ka$ into $2$. For each $\eta\in[\lam,\lam^+)$, let $f_\eta$ be the $\eta$th Cohen-generic function from $\ka$ to 2 added by $I_1$. Next, for each $\zeta\in[\lam,\lam^+)$, let $\vp_\zeta$ be the set
$$
\bigcup_{\eta\in[\lam,\zeta)} (\lb j(\eta)\rb \times f_\eta) \cup\lb\la j(\eta),\ka,x_\eta\ra\rb.
$$
Note that $\vp_\zeta\in N[G_{j(\ka)}\ast j(f\ast I_0)]$ by the closure of that model and also that $\vp_\zeta\in j(\bb{A}(\ka))$. Finally, note that if $\zeta<\zeta'$, then the restriction of $\vp_{\zeta'}$ to $j[[\lam,\zeta)]\times(\ka+1)$ equals $\vp_{\zeta}$.

To obtain $I^*_1$, we will first obtain \emph{some} generic, and then we will alter it. Thus let $J$ be some $N[G_{j(\ka)}\ast j(f\ast I_0)]$-generic filter over $j(\bb{A}(\ka))$ in $V[G_{\ka+1}]$, and for each $\zeta<\lam^+$, let $J_\zeta$ be the restriction of $J$ to $\Add(j(\ka),[j(\lam),j(\zeta)))$. Since $j$ is continuous as $\lam^+$, we have that $J=\bigcup_{\zeta<\lam^+}J_\zeta$. Now define $I^*_{1,\zeta}$ to consist of all conditions gotten by taking a condition in $J_\zeta$ and altering it to agree with $\vp_\zeta$. Since we are altering conditions in $J_\zeta$ only on a small set (namely, a set of size $\lam<j(\ka)$), $I^*_{1,\zeta}$ is still generic. Moreover, $\zeta<\zeta'$ implies that $I^*_{1,\zeta}\seq I^*_{1,\zeta'}$. Once more using the continuity of $j$ at $\lam^+$, and also using the chain condition of $j(\bb{A}(\ka))$, we see that $I^*_1:=\bigcup_{\zeta<\lam^+}I^*_{1,\zeta}$ is an $N[G_{j(\ka)}\ast j(f\ast I_0)]$-generic filter over $j(\bb{A}(\ka))$.

Now we lift $j$ to $j:V[G_{\ka+1}]\lra N[G_{j(\ka)}\ast j(f) \ast j(I_0\times I_1)]$, where $j(I_1)=I^*_1$, and we abbreviate the latter model by $N^*$. Let $\cal{U}^*$ be the supercompactness measure on $P_\ka(\lam)$ generated by this lift of $j$. For each $\eta\in[\lam,\lam^+)$, let $F_{j(\eta)}$ be the $j(\eta)$th function from $j(\ka)$ to 2 added by $I^*_1$. Then for each $\eta\in[\lam,\lam^+)$, $F_{j(\eta)}(\ka)=x_\eta$, and $j(f_\eta)=F_{j(\eta)}$. Thus $j(f_\eta)(\ka)=x_\eta$ for all $\eta\in[\lam,\lam^+)$.

Now let $U$ be the normal ultrafilter on $\ka$ induced by $\cal{U}^*$. Let $M$ be the ultrapower of $V[G_{\ka+1}]$ by $U$, let $j_0$ be the ultrapower map, and let $k$ from $M$ to $N^*$ be the factor map taking $[f]_U$ to $j(f)(\ka)$. By construction, $j(\ka)$ is a subset of the range of $k$. Since $j(\ka)$ is also a member of the range of $k$, we have that the critical point of $k$ is above $j(\ka)$.

\begin{lemma}\label{lemma:gg}
In $V[G_{\ka+1}]$, there is a $(U,2)$-guiding generic. Additionally, if $G^*_{\ka+1}$ is $V$-generic over $\bb{E}^*_{\ka+1}$, then there is also a $(U,2)$-guiding generic in $V[G^*_{\ka+1}]$.
\end{lemma}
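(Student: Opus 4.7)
The plan is to construct, in $V[G_{\ka+1}]$, an $N^*$-generic filter $H^*$ for $\Col^{N^*}(\ka^{+2},<j(\ka))$, and then to observe that this same $H^*$ is automatically a $(U,2)$-guiding generic, thanks to the fact, arranged in the preceding construction, that $\crit(k)>j(\ka)$. The key observation is the following. Any elementary embedding between transitive classes fixes pointwise everything of rank below its critical point, by a standard induction on rank; applied to $k\colon M\lra N^*$, this says that $k$ is the identity on $V^M_{j(\ka)+1}$. Combined with $k\circ j_0=j$, this also forces $j_0(\ka)=j(\ka)$: since $j_0(\ka)\leq j(\ka)<\crit(k)$ we have $k(j_0(\ka))=j_0(\ka)$, while $k(j_0(\ka))=j(\ka)$ by the identity. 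Consequently, $\Col^M(\ka^{+2},<j_0(\ka))$ coincides, as an ordered set, with $\Col^{N^*}(\ka^{+2},<j(\ka))$, and every dense subset $D\in M$ of this poset is also a dense subset (as seen in $N^*$) of the same poset.

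Next I would carry out a standard guiding-generic construction inside $V[G_{\ka+1}]$. The poset is $\ka^{+2}$-closed in $N^*$, and $N^*$ remains closed under $\lam$-sequences from $V[G_{\ka+1}]$, a property that persists through each step of the master-condition analysis by the chain-condition and directed-closure facts invoked earlier in the section. Since the number of dense subsets of this poset lying in $N^*$ has $V$-cardinality at most $\lam^+$, they can be enumerated in $V[G_{\ka+1}]$ and met by a descending sequence of conditions; lower bounds at limit stages come from closure of the poset in $N^*$, while the intermediate sequences are kept inside $N^*$ via its closure, so that further lower bounds remain available. This is the same pattern used in \cite{GS:sch} and \cite{SU:tree}.

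Putting the two steps together, $H^*$ lies in $V[G_{\ka+1}]$ and meets every $M$-dense subset of $\Col^M(\ka^{+2},<j_0(\ka))$, which is precisely what it means to be a $(U,2)$-guiding generic. The case of $V[G^*_{\ka+1}]$ is entirely parallel, since the preparatory iterations $\bb{E}_{\ka+1}$ and $\bb{E}^*_{\ka+1}$ were built symmetrically, and the two ingredients actually used above---closure of $N^*$ and $\crit(k)>j(\ka)$---are produced in both cases by the same Cohen-coding argument. The main technical difficulty has effectively been discharged in the preceding construction: fabricating a factor map whose critical point exceeds $j(\ka)$ is exactly what makes the transfer from an $N^*$-generic back to an $M$-generic nearly automatic.
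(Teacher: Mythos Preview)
Your proof is correct and takes essentially the same approach as the paper: build an $N^*$-generic for the collapse and exploit $\crit(k)>j(\ka)$ to see that it is $M$-generic. The only difference is presentational---the paper pulls the generic back via $k^{-1}[\tilde G\cap\ran(k)]$ using the $j_0(\ka)$-c.c.\ of $\C$, whereas you observe more directly that $\C=\C^*$ and that $M$-dense sets are literally $N^*$-dense (one small caveat: conditions and dense subsets of $\C$ have rank a few steps above $j(\ka)$, not $\leq j(\ka)$, but since $\crit(k)$ is an $M$-cardinal above $j(\ka)$ your identity claim extends to cover them).
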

\begin{proof}
We introduce the abbreviations 
$$
\C:=\Col^M(\ka^{++},<j_0(\ka))\text{ and } \C^*:=\Col^{N^*}(\ka^{++},<j(\ka)).
$$ 
Note that $k(\C)=\C^*$. In $V[G_{\ka+1}]$, we may build an $N^*$-generic $\tilde{G}$ over $\C^*$. Since $\C$ is $j_0(\ka)$-c.c. in $M$ and $\crit(k)>j(\ka)\geq j_0(\ka)$, every max antichain $A$ of $\C$ which is a member of $M$ satisfies that $j_0(A)=j_0[A]$. Thus $k^{-1}[\tilde{G}\cap\ran(k)]$ is an $M$-generic filter over $\C$. 
\end{proof}

\subsection{Preserving the Guiding Generics by Club Adding}

In the next section, when we build our models involving various activity on $\aleph_{\om+2}$, the final components of our forcings will be products of the form $\Pc(U,G^g)\times\ps_{\lam^+}$, where $\ps_{\lam^+}$ is a club adding iteration from Fact \ref{fact:GLS}. We will need that $\Pc(U,G^g)$ is still well-behaved after forcing with $\ps_{\lam^+}$, which in turn requires that we show that $G^g$ is still a guiding generic after forcing with $\ps_{\lam^+}$. This will follow from the $\lam$-distributivity of $\ps_{\lam^+}$ and a lemma about ultrapowers. The proof is the same in either of the cases discussed in the previous subsection. Thus for the rest of this subsection, we let $V^*$ be either $V[G_{\ka+1}]$ or $V[G^*_{\ka+1}]$ from the previous subsection. In each case, we lifted the original $j$ to $j:V^*\lra N^*$ for some $N^*$. Finally, we let $U$ be the normal measure on $\ka$ derived from $j$, and we set $M:=\ult(V^*,U)$. Let $j_0:V^*\lra M$ be the ultrapower map.

Now fix some $\Q\in V^*$ which is $\lam$-distributive.\footnote{The proof only needs that $\Q$ is $\ka^+$-distributive, but we will only use the lemma in the case of $\lam$-distributivity.} Then $U$ is still a normal measure on $\ka$ in $V^*[\cal{Q}]$, where $\cal{Q}$ is $V^*$-generic over $\Q$. In particular, we can form the ultrapower $M^+:=\ult(V^*[\cal{Q}],U)$, and let $j^+_0:V^*[\cal{Q}]\lra M^+$ be the ultrapower embedding. We will show that $M^+=M[j^+_0(\cal{Q})]$.

\begin{lemma} Suppose that $\Q$ is $\lam$-distributive in $V^*$, and let $\cal{Q}$ be $V^*$-generic over $\Q$. Then
\begin{enumerate}
\item $M\seq M^+$ and $j_0^+$ extends $j_0$;
\item $j_0^+(\cal{Q})$ is an $\ult(V^*,U)$-generic filter over $j_0(\Q)$; and
\item $M^+=M[j^+_0(\cal{Q})]$.
\end{enumerate}
\end{lemma}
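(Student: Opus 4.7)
The plan is to handle the three clauses in order, leaning throughout on the fact that $\ka^+$-distributivity (which is immediate from $\lam$-distributivity since $\ka<\lam$) ensures that $\Q$ adds no new $\ka$-sequences of ground model sets. In particular, $U$ remains a normal ultrafilter on $\ka$ in $V^*[\cal{Q}]$, so $M^+$ is well defined, and any function $f:\ka\to V^*$ lying in $V^*[\cal{Q}]$ already lies in $V^*$.

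For (1), I would compare the two ultrapowers directly. Every element of $M$ has the form $[f]_U$ for some $f:\ka\to V^*$ in $V^*$, and the {\L}o\'s equivalence and membership relations on such classes depend only on $U$. Hence $[f]_U^M\mapsto [f]_U^{M^+}$ is a well-defined elementary embedding, identifying $M$ with a transitive submodel of $M^+$. Under this identification, constant functions show $j_0^+(y)=j_0(y)$ for every $y\in V^*$, so $j_0^+$ extends $j_0$.

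Clause (2) is the main step, and where $\lam$-distributivity enters in earnest. Fix a dense open $D\in M$ of $j_0(\Q)$. By {\L}o\'s, $D=[g]_U$ for some $g:\ka\to V^*$ with $\{\alpha<\ka:g(\alpha)\text{ is dense open in }\Q\}\in U$, so after a harmless modification we may assume $g(\alpha)$ is dense open in $\Q$ for every $\alpha<\ka$. Since $\Q$ is $\lam$-distributive in $V^*$ and $\ka<\lam$, the set $E:=\bigcap_{\alpha<\ka}g(\alpha)$ is dense in $\Q$, so $\cal{Q}\cap E\neq\es$; pick $p$ in this intersection. Then $p\in g(\alpha)$ for all $\alpha<\ka$, so {\L}o\'s gives $j_0(p)\in D$, while clause (1) and $p\in\cal{Q}$ give $j_0(p)=j_0^+(p)\in j_0^+(\cal{Q})$. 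Hence $D\cap j_0^+(\cal{Q})\neq\es$.

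Finally, for (3), the inclusion $M[j_0^+(\cal{Q})]\seq M^+$ is immediate from (1) together with $j_0^+(\cal{Q})\in M^+$. For the reverse, every $x\in M^+$ can be written as $j_0^+(F)(\ka)$ for some $F\in V^*[\cal{Q}]$; choosing a $\Q$-name $\dot{F}\in V^*$ with $\dot{F}[\cal{Q}]=F$, elementarity of $j_0^+$ and clause (1) yield $j_0^+(F)=j_0^+(\dot{F})[j_0^+(\cal{Q})]=j_0(\dot{F})[j_0^+(\cal{Q})]$, which is evaluable inside $M[j_0^+(\cal{Q})]$; so $x\in M[j_0^+(\cal{Q})]$. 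I expect (2) to be the only real obstacle: (1) and (3) are largely formal name-chasing, whereas the genericity of $j_0^+(\cal{Q})$ over $M$ in (2) rests essentially on distributivity of $\Q$ being strong enough to collapse a $U$-indexed family of dense open sets into a single dense subset of $\Q$ that $\cal{Q}$ must meet.
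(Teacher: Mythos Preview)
Your argument is correct, and for clauses (1) and (3) it is essentially a streamlined version of what the paper does (the paper carries out (1) by an explicit rank induction showing $[f]^{V^*}=[f]^{V^*[\cal{Q}]}$ for $f\in V^*$, whereas you invoke the standard fact that the natural map is elementary onto a transitive image; your preamble about $\ka$-sequences into $V^*$ lying in $V^*$ is exactly what makes the image transitive).

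The genuine difference is in (2). The paper disposes of (2) in a single line, ``by the elementarity of $j_0^+$'': since $\cal{Q}$ is $V^*$-generic over $\Q$, elementarity gives that $j_0^+(\cal{Q})$ is $j_0^+(V^*)$-generic over $j_0(\Q)$, and one checks (implicitly, using the same distributivity fact underlying (1)) that $j_0^+(V^*)=M$. Your route is instead a direct density argument: represent an arbitrary dense $D\in M$ as $[g]_U$, use $\ka^+$-distributivity to intersect the $g(\al)$'s into a single dense set, and pull a witness out of $\cal{Q}$. Both are valid; yours is more self-contained and makes the role of distributivity in (2) explicit, while the paper's is shorter but leans on the class-level identification $j_0^+(V^*)=M$, which itself requires the distributivity input you isolated at the outset.
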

\begin{proof}
Throughout the proof, we make the standard abuse of notation in discussing ultrapowers by equivocating between the $U$-equivalence class of a function with domain $\ka$ and its image under the transitive collapse map. We also suppress mention of $U$ in the notation for equivalence classes.

For (1), it suffices to prove that for any $f:\ka\lra V^*$ with $f\in V^*$, $[f]^{V^*}=[f]^{V^*[\cal{Q}]}$, where the superscript indicates the model in which the equivalence class is computed. We will prove this by induction on the max of the ranks of (the transitive collapse of) $[f]^{V^*}$ and $[f]^{V^*[\cal{Q}]}$. The case when this max is zero is immediate. Thus fix an $f$ for which this max is not zero.

For the first inclusion, let $x\in [f]^{V^*}$, so that $x=[g]^{V^*}$ for some $g\in V^*$. Then the rank of $[g]^{V^*}$ is smaller than that of $[f]^{V^*}$. Moreover, $[g]^{V^*}\in [f]^{V^*}$ implies that $[g]^{V^*[\cal{Q}]}\in[f]^{V^*[\cal{Q}]}$. Therefore, the max of the ranks of $[g]^{V^*}$ and $[g]^{V^*[\cal{Q}]}$ is less than the max of the ranks of $[f]^{V^*}$ and $[f]^{V^*[\cal{Q}]}$. By induction, $x=[g]^{V^*}=[g]^{V^*[\cal{Q}]}\in[f]^{V^*[\cal{Q}]}$.

For the other direction, fix some $y\in [f]^{V^*[\cal{Q}]}$, and write $y=[h]^{V^*[\cal{Q}]}$ for some $h:\ka\lra V^*[\cal{Q}]$ with $h\in V^*[\cal{Q}]$. Then $X:=\lb\al<\ka:h(\al)\in f(\al)\rb\in U$. Since $f\in V^*$ and $V^*$ is transitive, $h(\al)\in V^*$ for all $\al\in X$. By modifying $h$ on a $U$-measure zero set, we may assume that for all $\al<\ka$, $h(\al)\in V^*$. Then $h:\ka\lra V^*$ is a function which lives in $V^*[\cal{Q}]$. However, $\Q$ is $\lam$-distributive in $V^*$, and hence $h\in V^*$. Since the max of the ranks of $[h]^{V^*}$ and $[h]^{V^*[\cal{Q}]}$ is less than the max of the ranks of $[f]^{V^*}$ and $[f]^{V^*[\cal{Q}]}$, the induction hypothesis implies that $y=[h]^{V^*[\cal{Q}]}=[h]^{V^*}\in[f]^{V^*}$. This completes the proof of (1).

(2) follows by the elementarity of $j_0^+$.

For (3), we know from (1) that $M\seq M^+$, and since $j^+_0(\cal{Q})\in M^+$, we get $M[j^+_0(\cal{Q})]\seq M^+$. For the other direction, let $y\in M^+$. Then $y=[f]^{V^*[\cal{Q}]}$ for some function $f\in V^*[\cal{Q}]$. Let $\underset{\sim}{f}\in V^*$ be a function so that for all $\al<\ka$, $\underset{\sim}{f}(\al)$ is a $\Q$-name satisfying $\underset{\sim}{f}(\al)[\cal{Q}]=f(\al)$. Then $[\underset{\sim}{f}]^{V^*}$ is a $j_0(\Q)$-name. By (1), $[\underset{\sim}{f}]^{V^*}=[\underset{\sim}{f}]^{V^*[\cal{Q}]}$, and thus $[\underset{\sim}{f}]^{V^*[\cal{Q}]}$ is a $j_0(\Q)$-name. We claim that $[\underset{\sim}{f}]^{V^*[\cal{Q}]}[j^+_0(\cal{Q})]=y$. Recalling that $y=[f]^{V^*[\cal{Q}]}$ and applying Los' theorem in $M^+$, we see that $[\underset{\sim}{f}]^{V^*[\cal{Q}]}[j^+_0(\cal{Q})]=[f]^{V^*[\cal{Q}]}$ iff
$$
\lb\al<\ka:\underset{\sim}{f}(\al)[\cal{Q}]=f(\al)\rb\in U,
$$
which holds by choice of $\underset{\sim}{f}$.
\end{proof}

\begin{Cor}\label{Cor:GGPres} Suppose that $\Q$ is $\lam$-distributive in $V^*$, that $\cal{Q}$ is $V^*$-generic over $\Q$, and that $G^g$ is a $(U,2)$-guiding generic in $V^*$. Then $G^g$ is still a $(U,2)$-guiding generic in $V^*[\cal{Q}]$.
\end{Cor}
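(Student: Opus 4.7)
My plan is to reduce the claim, via the previous lemma, to a distributivity argument for $j_0(\Q)$. Setting $M := \ult(V^*, U)$ and $M^+ := \ult(V^*[\cal{Q}], U)$, with respective ultrapower maps $j_0$ and $j_0^+$, the previous lemma gives $M^+ = M[j_0^+(\cal{Q})]$, with $j_0^+$ extending $j_0$ (so in particular $j_0^+(\ka) = j_0(\ka)$) and with $j_0^+(\cal{Q})$ being $M$-generic over $j_0(\Q)$. The task thus reduces to showing that $G^g$ is $M^+$-generic over $\Col^{M^+}(\ka^{++}, <j_0^+(\ka))$.

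Next, since $\Q$ is $\lam$-distributive in $V^*$, the elementarity of $j_0$ yields that $j_0(\Q)$ is $j_0(\lam)$-distributive in $M$. Since $\ka < \lam$ in $V^*$, by elementarity $j_0(\ka) < j_0(\lam)$ in $M$, and so $j_0(\lam)$-distributivity of $j_0(\Q)$ in $M$ implies that any sequence in $M^+$ of length at most $j_0(\ka)$ with values in $M$ already lies in $M$. The main obstacle here is just matching the distributivity level to the size of the collapse poset; the identification $j_0(\lam) = (j_0(\ka)^{++})^M$ does this work once $\ka < \lam$ is pushed through $j_0$.

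Finally, I would apply this distributivity to the collapse poset. Let $C := \Col^M(\ka^{++}, <j_0(\ka))$; since $j_0(\ka)$ is inaccessible in $M$ and $\ka^+ < j_0(\ka)$, $|C|^M = j_0(\ka)$. By the preceding paragraph, every subset of $C$ in $M^+$ already lies in $M$. Consequently $(\ka^{++})^{M^+} = (\ka^{++})^M$, the conditions of the two collapse posets coincide (as each has size $< (\ka^{++})^M$, hence is fixed by the distributivity), $\Col^{M^+}(\ka^{++}, <j_0^+(\ka)) = C$, and every dense subset of $C$ in $M^+$ is in $M$. Since $G^g$ is $M$-generic over $C$ by hypothesis, $G^g$ meets every such dense set, and is therefore $M^+$-generic over $\Col^{M^+}(\ka^{++}, <j_0^+(\ka))$. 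This establishes that $G^g$ is a $(U,2)$-guiding generic in $V^*[\cal{Q}]$, completing the proof.
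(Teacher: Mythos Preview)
Your proof is correct and follows essentially the same approach as the paper: both invoke the previous lemma to identify $M^+$ as $M[j_0^+(\cal{Q})]$, observe that $j_0(\Q)$ is $j_0(\lam)$-distributive in $M$ with $j_0(\ka)<j_0(\lam)$, and conclude that the collapse poset and its dense subsets are unchanged in passing from $M$ to $M^+$. You spell out in more detail why the distributivity bound suffices (via $|C|^M=j_0(\ka)$), whereas the paper simply asserts the equality of the two collapse posets and their dense sets; the aside about $j_0(\lam)=(j_0(\ka)^{++})^M$ is true in this context but not needed, since $j_0(\ka)<j_0(\lam)$ already does the work.
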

\begin{proof}
By definition of $G^g$ being a $(U,2)$-guiding generic, $G^g$ is $M$-generic for the poset $\Col^M(\ka^{++},<j_0(\ka))$. However, $M^+=M[j_0^+(\cal{Q})]$ is a generic extension of $M$ by a $j_0(\lam)$-distributive forcing, and $j_0(\ka)<j_0(\lam)$. Consequently,
$$
\Col^M(\ka^{++},<j_0(\ka))=\Col^{M^+}(\ka^{++},<j_0(\ka)),
$$
and this poset has the same dense sets in $M$ as in $M^+$. Thus $G^g$ is still a $(U,2)$-guiding generic in $V^*[\cal{Q}]$.
\end{proof}

\section{The Models}\label{sec:wc}

The goal of this section is to prove Theorem \ref{thm:WeaklyCompactCases}. We fix a model $V$ of the $\gch$ in which $\ka$ is indestructibly supercompact and in which $\lam>\ka$ is the least weakly compact above $\ka$. We first deal with the cases which do not involve bringing $\ka$ down to $\aleph_\om$ (Subsection \ref{ss:wc}), and then we incorporate collapses into the Prikry forcing to bring $\ka$ down to $\aleph_\om$ (Subsection \ref{ss:wcDown}). Since we are concerned with making the tree property hold at $\lam$ in all of these cases, we will have to  understand how the club adding influences the arguments for the branch lemmas. In order to avoid excessive repetition, we will only prove these branch lemmas for the harder case in which the Prikry forcing involves collapses. However, we will still state the branch lemmas in the non-collapsing case.

\subsection{Without Collapsing}\label{ss:wc} Here we will prove the parts of Theorem \ref{thm:WeaklyCompactCases} which do not involve collapsing to make $\ka$ become $\aleph_\om$. The argument initially considers the set up for the $\AP(\lam)$ and $\neg\AP(\lam)$ cases separately, and then it finishes with an argument which works for either case. 

We may assume, by forcing with the $\ka$-directed closed poset $\bb{F}_{(\ka,\lam)}$ if necessary (see Lemma \ref{lemma:wcld}), that there is a weakly compact Laver diamond $\mf{l}$ on $\lam$. We use $\bb{M}_\mf{l}$ and $\bb{M}^*_\mf{l}$ to abbreviate, respectively, $\bb{M}_\mf{l}(\ka,\lam)$ and $\bb{M}^*_\mf{l}(\ka,\lam)$.

\subsubsection{$\AP(\lam)$ holds: setting up the argument.} Since $\ka$ is indestructibly supercompact and $\bb{M}_{\mf{l}}$ is $\ka$-directed closed, $\ka$ is still supercompact in the extension by $\bb{M}_{\mf{l}}$. In particular, $\ka$ is measurable in this extension, and so we may let $\dot{U}$ be an $\bb{M}_{\mf{l}}$-name for a normal measure on $\ka$.

Consider the forcing
$$
\bb{W}:=\bb{M}_{\mf{l}}\ast(\dot{\ps}_{\lam^+}\times\Prk(\dot{U})),
$$
where $\dot{\ps}_{\lam^+}$ is an $\bb{M}_{\mf{l}}$-name for a standard club adding iteration to achieve $\CSR(\lam)$ (see Fact \ref{fact:GLS}(2)). We will show that $\bb{W}$ forces $\TP(\lam)\we\AP(\lam)\we\CSR(\lam)$, in addition to forcing $\cf(\ka)=\om$ and $\ka^{++}=\lam$.

By Fact \ref{fact:GLS}, the poset $\bb{M}_{\mf{l}}\ast\dot{\ps}_{\lam^+}$ preserves all cardinals $\leq\ka^+$, forces $\lam=\ka^{++}$, and preserves all cardinals $\geq\lam^+$. Moreover, it forces $\AP(\lam)\we\CSR(\lam)$. Since $\ps_{\lam^+}$ is $\lam$-distributive in the $\bb{M}_{\mf{l}}$-extension, $\dot{U}$ is forced by $\bb{M}_{\mf{l}}\ast\dot{\ps}_{\lam^+}$ to still be a normal measure on $\ka$. Thus, in $V[\bb{M}_{\mf{l}}\ast\dot{\ps}_{\lam^+}]$, $\Prk(U)$ preserves all cardinals of that model and singularizes $\ka$. Since it preserves the cardinals of $V[\bb{M}_{\mf{l}}\ast\dot{\ps}_{\lam^+}]$, it preserves $\AP(\lam)$. Moreover, by Theorem \ref{th:CSRPreserve}, $\Prk(U)$ also preserves $\CSR(\lam)$. Thus in the $\bb{W}$-extension, we have that $\cf(\ka)=\om\we\CSR(\ka^{++})\we\AP(\ka^{++})$. It remains to show that $\bb{W}$ forces $\TP(\lam)$.

\begin{remark}
By Fact \ref{nicenamestrick}, it suffices to show that for all $\al<\lam^+$, $\TP(\lam)$ holds after forcing with
$$
\bb{W}_\al:=\bb{M}_{\mf{l}}\ast(\dot{\ps}_\al\times\Prk(\dot{U})).
$$
\end{remark}
 So fix an $\al<\lam^+$ and a $\bb{W}_\al$-name $\dot{T}$ for a $\lam$-tree. We will show that $\dot{T}$ is forced by $\bb{W}_\al$ to have a cofinal branch. 

In $V$, fix a weakly compact embedding $k:M\lra N$ with $\crit(k)=\lam$, where $M$ contains $\mf{l}$, $\bb{W}_\al$, $\dot{T}$, and the parameter $\vec{M}^\al$ from Lemma \ref{lemma:parameter} as elements.  Since $\mf{l}$ is a weakly compact Laver diamond, we also assume that $k(\mf{l})(\lam)=\dot{\ps}_\al$ and that $^{<\lam}N\seq N$. Let $\dot{q}^\al$ be the $k(\bb{M}_{\mf{l}})$-name in $N$ which satisfies the conclusion of Lemma \ref{lemma:ce}. 

Our general strategy is this: we will first construct a complete embedding with domain $\bb{W}_\al$, and then we will show that the resulting quotient of the target poset is forcing equivalent to a more manageable poset. We will then show that this more manageable poset satisfies the requisite branch lemma.

Recall, before reading the next lemma, that there is a dense embedding from $k(\bb{M}_\mf{l})$ to $\bb{M}_\mf{l}\ast\dot{\ps}_\al\ast\dot{\N}$ (see Lemma \ref{lemma:tailprojections}).

\begin{lemma}\label{lemma:ce,AP} The map $\iota$ given by
$$
(m,\dot{q},\dot{r})\mapsto (m,\dot{q},1_{\dot{\bb{N}}},\dot{r})
$$
is a complete embedding from $\bb{W}_\al$ to $(\bb{M}_\mf{l}\ast\dot{\ps}_\al\ast\dot{\bb{N}})\ast k(\Prk(\dot{U}))$.
\end{lemma}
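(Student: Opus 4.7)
The plan is to verify each of the four properties of a complete embedding listed in Definition~\ref{def:ce} for the map $\iota$. I would first check that $\iota$ is well-defined: because $\crit(k)=\lam>\ka$, we have $k(X)=X$ for any $X\seq\ka$, so for $X\in\dot{U}[G]$ with $G$ an $\bb{M}_\mf{l}$-generic, $X\in k(\dot{U})[G^*]$, where $G^*$ is the $k(\bb{M}_\mf{l})$-generic induced from any $(\bb{M}_\mf{l}\ast\dot{\ps}_\al\ast\dot{\bb{N}})$-generic via the dense embedding of Lemma~\ref{lemma:tailprojections}. Consequently $\Prk(\dot{U})$ embeds as an ordered subposet of $k(\Prk(\dot{U}))$, and the $\bb{M}_\mf{l}$-name $\dot{r}$ in the source is canonically reinterpreted as an $(\bb{M}_\mf{l}\ast\dot{\ps}_\al\ast\dot{\bb{N}})$-name for a condition in $k(\Prk(\dot{U}))$.

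I would factor $\iota$ through the natural map $\bb{M}_\mf{l}\ast(\dot{\ps}_\al\times\Prk(\dot{U}))\to\bb{M}_\mf{l}\ast\dot{\ps}_\al\ast\Prk(\dot{U})$ (a dense embedding, because $\dot{\ps}_\al$ is $\lam$-distributive in the $\bb{M}_\mf{l}$-extension, so every $\bb{M}_\mf{l}\ast\dot{\ps}_\al$-name for a $\Prk(\dot{U})$-condition reduces to an $\bb{M}_\mf{l}$-name), followed by the map which inserts $1_{\dot{\bb{N}}}$ and applies the reinterpretation $\Prk(\dot{U})\hookrightarrow k(\Prk(\dot{U}))$. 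Properties (1) and (2) are immediate from the definitions. Property (3) follows from the observation that compatibility of two conditions in $\Prk(\dot{U})$ is equivalent to compatibility of their images in $k(\Prk(\dot{U}))$: stems live in $V_\ka$ and are absolute, and the measure-one component of a common extension depends only on filter membership, which is automatic for $U$-measure-one sets in either $U$ or $k(\dot{U})$ since $U\seq k(\dot{U})$.

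The main work lies in property (4), which I would verify via the reduction formulation: given a target condition $(m,\dot{q},\dot{s},(\dot{t},\dot{B}))$, I would find a source condition whose source-extensions all have $\iota$-images compatible with the target condition. The stem $\dot{t}$ is equivalent to an $\bb{M}_\mf{l}$-name, since $\dot{\ps}_\al\ast\dot{\bb{N}}$ adds no bounded subsets of $\ka$ (from Lemma~\ref{lemma:tailprojections}, $\dot{\bb{N}}$ is a projection of a $\ka$-closed times $\ka^+$-closed product, and $\dot{\ps}_\al$ is $\lam$-distributive). I expect the main technical obstacle to be that $\dot{B}$, being only forced to be $k(\dot{U})$-measure-one, need not contain any $U$-measure-one set, so one cannot simply pick a source measure-one set inside $\dot{B}$. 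The resolution uses the additional freedom afforded by the $\dot{\bb{N}}$-slot of the target condition together with the fact that common extensions in $k(\Prk(\dot{U}))$ may use all of $k(\dot{U})$: the intersection of a $U$-measure-one set with $\dot{B}$ remains $k(\dot{U})$-measure-one and serves as the measure-one component of the common extension.
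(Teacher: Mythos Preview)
Your handling of well-definedness and properties (1)--(3) is fine and matches the paper's approach (which argues via the elementarity of the lift $k:M[G]\to N[G^*]$). The gap is in property (4).

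Your proposed resolution only addresses the measure-one component of the common extension, not the stem. Suppose you take as reduction the condition $(m,\dot{q},(\dot{t},\dot{A}'))$ for some $\dot{A}'\in\dot{U}$. A source extension has the form $(m'',\dot{q}'',(\dot{u},\dot{A}''))$ with $\dot{u}\sqsupseteq\dot{t}$ and $\ran(\dot{u}\setminus\dot{t})\seq\dot{A}'$. For $\iota(m'',\dot{q}'',(\dot{u},\dot{A}''))$ to be compatible with the target $(m,\dot{q},\dot{s},(\dot{t},\dot{B}))$, you need an $\dot{s}'\leq\dot{s}$ so that $(m'',\dot{q}'',\dot{s}')$ forces $\ran(\dot{u}\setminus\dot{t})\seq\dot{B}$. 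But $\dot{s}$ may already force some $\al\in\dot{A}'$ out of $\dot{B}$ (indeed, you yourself observe that $\dot{B}$ need not contain any $\dot{U}$-large set, e.g., when $\dot{B}$ is carved out by a Cohen function added by the $\Add(\ka,[\lam,k(\lam)))$-part of $\dot{\bb{N}}$), and then the source extension with $\al$ appended to $\dot{t}$ has $\iota$-image incompatible with the target. The ``freedom in the $\dot{\bb{N}}$-slot'' does not help once $\dot{s}$ has decided such an $\al\notin\dot{B}$. One could try to refine $\dot{A}'$ to the set of $\al$ which $\dot{s}$ does \emph{not} force out of $\dot{B}$, but then one must still find a \emph{single} $\dot{s}'\leq\dot{s}$ placing all finitely many new stem points into $\dot{B}$ simultaneously, and compatibility of the witnessing extensions is not automatic; at that point the argument becomes at least as involved as the paper's.

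The paper avoids this by using the maximal-antichain formulation directly rather than reductions. Given a maximal antichain $A\seq\bb{W}_\al$ and a target condition $(m_0,\dot{q}_0,\dot{n}_0,\dot{r}_0)$, one forces below $(m_0,\dot{q}_0,\dot{n}_0)$ to obtain $G\ast H_\al\ast G_{\bb{N}}$ and forms the trace $\bar{A}=\{\bar{r}\in\Prk(U):(\exists(m,\dot{\bar{q}})\in G\ast H_\al)\,(m,\dot{\bar{q}},\dot{\bar{r}})\in A\}$. This is a maximal antichain in $\Prk(U)$, hence of size $\leq\ka$ by the $\ka^+$-c.c.; by the $\lam$-distributivity of $\ps_\al$ it lies in $V[G]$, and by the $<\lam$-closure of $M[G]$ it lies in $M[G]$. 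Since $|\bar{A}|\leq\ka<\crit(k)$, one has $k(\bar{A})=\bar{A}$, so $\bar{A}$ is maximal in $k(\Prk(U))$ by elementarity. Now the target Prikry condition $r_0$ is compatible with some $\bar{r}\in\bar{A}$, and unwinding gives compatibility with an element of $\iota[A]$. The key point --- using the chain condition to get a $\ka$-sized antichain that $k$ fixes pointwise --- is exactly what your reduction argument is missing.
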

\begin{proof}
We will verify that conditions (1)-(4) of Definition \ref{def:ce} hold, though we first check that $\iota$ maps from the poset on the left to the one on the right. The only part to check is that $(m,\dot{q},1_{\dot{\N}})$ forces that $\dot{r}\in k(\Prk(\dot{U}))$. Given an $N$-generic $G\ast H_\al\ast G^*_\N$ over $\bb{M}_\mf{l}\ast\dot{\ps}_\al\ast\dot{\bb{N}}$ with $(m,\dot{q})\in G\ast H_\al$, and letting $G^*$ denote its equivalent generic filter over $k(\bb{M})$, we may lift $k$ to $k:M[G]\to N[G^*]$. Then, letting $r:=\dot{r}[G]$, we have that $r\in\Prk(U)$, since $(m,\dot{q},\dot{r})$ is a condition. So by the elementarity of $k$, we conclude that $r=k(r)\in k(\Prk(U))$.

Item (1) of Definition \ref{def:ce} holds by definition. Item (2) follows by an argument similar to the one in the previous paragraph, using the elementarity of the lift of $k$. Thus we address item (3), proving the contrapositive. Suppose that $(m_i,\dot{q}_i,\dot{r}_i)$ for $i\in 2$ are conditions in $\bb{W}_\al$ whose $\iota$-images are compatible, and we show that they are compatible in $\bb{W}_\al$. Fixing a witness, let $(m,\dot{q},\dot{n},\dot{r})$ extend each $(m_i,\dot{q}_i,1_{\dot{\N}},\dot{r}_i)$.  Force over $N$ below $(m,\dot{q},\dot{n})$ to obtain an $N$-generic $G^*=G\ast H_\al\ast G^*_{\N}$. Now lift $k:M[G]\to N[G^*]$. Then $r$ is below $r_0$ and $r_1$ in $k(\Prk(U))$. By the elementarity of $k$, and since $k(r_i)=r_i$, there is a condition $r'\in M[G]$ extending $r_0$ and $r_1$. Let $(m',\dot{q}')$ be a condition in $G\ast H_\al$ extending $(m,\dot{q})$ which forces that $\dot{r}'$ extends each $\dot{r}_i$. Then $(m',\dot{q}',\dot{r}')$ extends $(m_i,\dot{q}_i,\dot{r}_i)$ for each $i\in 2$.

We finish the proof by showing that (4) of Definition \ref{def:ce} holds. Thus let $A$ be a maximal antichain of $\bb{W}_\al$; note that we are not assuming that $A$ is a member of $M$. We show that $\iota[A]$ is still maximal in the  target poset (it is an antichain by (3)). Let $(m_0,\dot{q}_0,\dot{n}_0,\dot{r}_0)$ be an arbitrary condition in the target poset. Let $G^*=G\ast H_\al\ast G_\N$ be an $N$-generic filter over $\bb{M}_\mf{l}\ast\dot{\ps}_\al\ast\dot{\bb{N}}$ containing $(m_0,\dot{q}_0,\dot{n}_0)$. Define
$$
\bar{A}:=\lb \bar{r}\in\Prk(U):(\exists (m,\dot{\bar{q}})\in G\ast H_\al)\;(m,\dot{\bar{q}},\dot{\bar{r}})\in A\rb.
$$
Then $\bar{A}$ is a maximal antichain in $\Prk(U)$ which lives in $V[G\ast H_\al]$. Since $\Prk(U)$ is $\ka$-linked in $V[G]$, and hence in $V[G\ast H_\al]$, $\bar{A}$ has size $\leq\ka$. Furthermore, $\ps_\al$ is $\lam$-distributive over $V[G]$, so $\bar{A}\in V[G]$. Since $M$ is closed under $<\lam$-sequences from $V$ and $\bb{M}_\mf{l}$ is $\lam$-c.c., $M[G]$ is closed under $<\lam$-sequences from $V[G]$. Thus $\bar{A}$ is a member of $M[G]$. Since $\bar{A}$ has size $\leq\ka$ and $\operatorname{crit}(k)=\lam>\ka$, we have that $k(\bar{A})=k[\bar{A}]=\bar{A}$ is a maximal antichain in $k(\Prk(U))$. Thus fix conditions $\bar{r}\in\bar{A}$ and $r_1\in k(\Prk(U))$ so that $r_1\leq\bar{r},r_0$. By definition of $\bar{A}$, we may find $(m,\dot{\bar{q}})\in G\ast H_\al$ with $(m,\dot{\bar{q}},\dot{\bar{r}})\in A$. Now let $(m_1,\dot{q}_1,\dot{n}_1)$ be a condition in $G^*$ below $(m_0,\dot{q}_0,\dot{n}_0)$ which forces that $\dot{r}_1\leq\dot{\bar{r}},\dot{r}_0$ and which satisfies that $(m_1,\dot{q}_1)$ extends $(m,\dot{\bar{q}})$. We now see that $(m_1,\dot{q}_1,\dot{n}_1,\dot{r}_1)$ is a condition extending both the starting condition $(m_0,\dot{q}_0,\dot{n}_0,\dot{r}_0)$ as well as $\iota(m,\dot{\bar{q}},\dot{\bar{r}})\in \iota[A]$.
\end{proof}

\begin{lemma}\label{lemma:ce,de}
Suppose that $W_\al=G\ast H_\al\ast R$ is $V$-generic over $\bb{W}_\al$. In $V[W_\al]$, let $\bb{S}_\mf{l}$ be the poset which consists of conditions $(n,\dot{r})$ in $\N\ast k(\Prk(\dot{U}))$ such that for all $\bar{r}\in R$, $n$ does not force, over $N[W_\al]$, that $\dot{r}$ is incompatible with $\bar{r}$; the order is the same as $\N\ast k(\Prk(\dot{U}))$.

Then $\bb{S}_{\mf{l}}$ is forcing equivalent to the quotient of $(\bb{M}_\mf{l}\ast\dot{\ps}_\al\ast\dot{\bb{N}})\ast  k(\Prk(\dot{U}))$ by $W_\al$, where this quotient is computed from the map $\iota$ from Lemma \ref{lemma:ce,AP}.
\end{lemma}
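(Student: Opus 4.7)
My plan is to establish forcing equivalence by exhibiting a dense embedding (or a common dense subset) between $\bb{S}_\mf{l}$ and the quotient $\ps^* := ((\bb{M}_\mf{l}\ast\dot{\ps}_\al\ast\dot{\bb{N}})\ast k(\Prk(\dot{U})))/\iota[W_\al]$ computed in $V[W_\al]$. The natural candidate takes $(n,\dot{r})\in\bb{S}_\mf{l}$ to a representative condition $(m,\dot{q},\dot{n},\dot{r})$ in $\ps^*$ with $(m,\dot{q})\in G\ast H_\al$ chosen to decide $\dot{n}$ to be $n$. Order- and antichain-preservation should follow routinely from the definition of $\iota$ and of the product order, so the main work lies in correctly matching the conditions on dense subsets of the two posets.

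First I would unravel the quotient. By Fact \ref{fact:quotientStandard}(2), any $(m,\dot{q},\dot{n},\dot{r})\in\ps^*$ may be strengthened to a condition with $(m,\dot{q})\in G\ast H_\al$, and by further extending within $G\ast H_\al$ I may assume this pair decides $\dot{n}$ to be some specific $n\in\bb{N}$. Two conditions in a two-step iteration $\bb{Q}\ast\dot{\bb{R}}$ are compatible iff they admit a common $\bb{Q}$-extension forcing compatibility of their $\dot{\bb{R}}$-parts. Applied to $(m,\dot{q},\dot{n},\dot{r})$ and $\iota(m,\dot{q},\dot{\bar{r}}) = (m,\dot{q},1_{\dot{\bb{N}}},\dot{\bar{r}})$ for $\bar{r}\in R$, this requires precisely that $(m,\dot{q},\dot{n})$ not force $\dot{r}\perp\dot{\bar{r}}$ in $k(\Prk(\dot{U}))$. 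Once $(m,\dot{q})$ has been absorbed into the generic, this becomes exactly: $n$ does not force in $\bb{N}$ that $\dot{r}\perp\bar{r}$, matching the definition of $\bb{S}_\mf{l}$ modulo the choice of base model.

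Density of the image would then follow immediately: any $\ps^*$-condition strengthens to one of the indicated form, which is an $\bb{S}_\mf{l}$-condition extending the original. For the reverse direction, I would verify that the natural assignment $(n,\dot{r})\mapsto(1,1,\check{n},\dot{r})$ (or a suitable representative in the quotient) lands in $\ps^*$, again using the translation of the compatibility constraint from the previous paragraph, together with the observation that $(1,1)$ is trivially in $G\ast H_\al$.

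The main obstacle I anticipate is the final reconciliation of models. The quotient's compatibility is naturally phrased using $\Vdash$ in $V[W_\al]$, whereas the constraint defining $\bb{S}_\mf{l}$ uses $\Vdash$ computed over $N[W_\al]$. To bridge this, I would argue that the relevant names $\dot{r}$ can be taken inside $N[G\ast H_\al]$ (since both $\bb{N}$ and $k(\Prk(\dot{U}))$ arise in $N$-extensions via Lemma \ref{lemma:tailprojections} and the lift analysis of Lemma \ref{lemma:ce}), and that the $\bb{N}$-forcing relation applied to statements of the form ``$\dot{r}\perp\bar{r}$'' is absolute between $N$ and $V$. This absoluteness should follow from the closure of $N$ under $<\lam$-sequences together with the $\ka^+$-Knaster property of $\Prk(U)$, which ensures that any witness to (in)compatibility present in $V$ is already captured in $N$.
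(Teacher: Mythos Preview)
Your outline matches the paper's: both exhibit a dense embedding from the quotient $\Q$ into $\bb{S}_\mf{l}$ via $(m,\dot{q},\dot{n},\dot{r})\mapsto(\dot{n}[G\ast H_\al],\dot{r})$, with the reverse direction covered by lifting $(n,\dot{r})$ to $(m,\dot{q},\dot{n},\dot{r})$ for a suitable $(m,\dot{q})\in G\ast H_\al$ forcing $(\dot{n},\dot{r})\in\dot{\bb{S}}_\mf{l}$.

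The one point to correct is your ``main obstacle''. The $V[W_\al]$ versus $N[W_\al]$ distinction is not really the issue: the posets $\bb{N}$ and $k(\Prk(\dot{U}))$, their conditions, and their order all live in $N$-extensions, and the statement ``$\dot{r}\perp\bar{r}$ in $k(\Prk(U))$'' is absolute, so the $\bb{N}$-forcing relation for it does not depend on the ambient $V$ or $N$. The delicate direction is instead the paper's \emph{first claim}---that quotient-membership implies $\bb{S}_\mf{l}$-membership---where the subtlety is that the $\bb{S}_\mf{l}$-constraint is phrased over $N[W_\al]$ (i.e., \emph{after} the Prikry generic $R$ is added), whereas compatibility in the quotient makes no reference to $R$. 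The paper handles this by a contradiction argument: assuming some $\bar{r}_1\in R$ forces that $n$ forces incompatibility, it uses Fact~\ref{fact:quotientStandard}(2) to find a quotient-condition below both $(m,\dot{q},\dot{n},\dot{r})$ and $\iota(m_1,\dot{q}_1,\dot{\bar{r}}_1)$, forces over $N$, lifts $k$, and then shows that $R^*\cap\Prk(U)$ is $V[G\ast H_\al]$-generic (via the chain condition of $\Prk(U)$, closure of $M[G]$, distributivity of $\ps_\al$, and elementarity of $k$) to reach a contradiction. This is where the ingredients you listed (closure, chain condition) actually enter, though for a different purpose than you anticipated.
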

\begin{proof}
We introduce the abbreviation $\Q$ for the quotient of  $(\bb{M}_\mf{l}\ast\dot{\ps}_\al\ast\dot{\bb{N}})\ast  k(\Prk(\dot{U}))$ by $W_\al$. Our goal is to define a dense embedding from $\Q$ to $\bb{S}_\mf{l}$.

Let $\vp$ be the map from $\Q$ to $\bb{N}\ast k(\Prk(\dot{U}))$
given (up to replacing the names $\dot{r}$ and $\dot{q}$ by equivalent ones named by $\N$ in $N[G\ast H_\al]$) by
$$
(m,\dot{q},\dot{n},\dot{r})\mapsto (\dot{n}[G\ast H_\al],\dot{r}).
$$
We show that $\vp$ is a dense embedding from $\Q$ to $\bb{S}_\mf{l}$ through a series of claims.

The first of these claims is that if $(m,\dot{q},\dot{n},\dot{r})$ is in $\Q$, then $(n,\dot{r})$ is in $\bb{S}_\mf{l}$. To see this, fix a condition $\bar{r}\in R$, and we will show that $n$ does not force over $N[G\ast H_\al\ast R]$ that $\dot{r}$ is incompatible with $\bar{r}$. Otherwise, there is a condition $\bar{r}_1\in R$ which forces (over $N[G\ast H_\al]$) that $n$ forces the incompatibility of $\dot{r}$ and $\bar{r}$. We may extend to assume that $\bar{r}_1\leq\bar{r}$. Let $(m_1,\dot{q}_1)\in G\ast H_\al$ be chosen so that $(m_1,\dot{q}_1,\dot{\bar{r}}_1)$ is in $G\ast H_\al\ast R$ and so that $(m_1,\dot{q}_1)$ forces that $\dot{\bar{r}}_1\leq\dot{\bar{r}}$. By Fact \ref{fact:quotientStandard}(2), we may find a condition $(m_2,\dot{q}_2,\dot{n}_2,r_2)$ in $\Q$ which extends the starting condition $(m,\dot{q},\dot{n},\dot{r})$ as well as $(m_1,\dot{q}_1,1_\N,\dot{\bar{r}}_1)=\iota(m_1,\dot{q}_1,\dot{\bar{r}}_1)$. Then $n_2\leq n_1$, and $n_2$ forces that $\dot{r}_2$ extends $\bar{r}_1$ and $\dot{r}$. Since $\bar{r}_1\leq\bar{r}$, $n_2$ also forces $\dot{r}_2\leq\bar{r}$. Now force in $N$ below $n_2$ to get a generic $G_\N$, and set $G^*:=G\ast H_\al\ast G_\N$. Additionally, let $R^*$ be $N[G^*]$-generic over $k(\Prk(U))$ containing $r_2$. By the chain condition of $\Prk(U)$, the closure of $M[G]$, the distributivity of $\ps_\al$, and the elementarity of $k:M[G]\to N[G^*]$, we see that $\bar{R}^*:=R^*\cap\Prk(U)$ is in fact $V[G\ast H_\al]$-generic over $\Prk(U)$. Moreover, it contains $\bar{r}_1$, since $r_2\leq\bar{r}_1$. By choice of $\bar{r}_1$, we conclude that $n$ forces over $N[G\ast H_\al\ast\bar{R}^*]$ that $\dot{r}$ is incompatible with $\bar{r}$. But in $N[G^*\ast R^*]$, $r$ and $\bar{r}$ are compatible; hence they are compatible in $N[G\ast H_\al\ast (G_\N\times\bar{R}^*)]$. Since this is a generic extension of $N[G\ast H_\al\ast\bar{R}^*]$ in which $\dot{r}$ is compatible with $\bar{r}$, $n$ does not in fact force their incompatibility.

The second claim is that conditions (1) and (2) of Definition \ref{def:ce} hold. (1) holds by definition, so we check (2), i.e., that $\vp$ is order preserving. Thus suppose that $(m_1,\dot{q}_1,\dot{n}_1,\dot{r}_1)\leq(m_0,\dot{q}_0,\dot{n}_0,\dot{r}_0)$, with both in the quotient. Then since $(m_1,\dot{q}_1)\in G\ast H_\al$, the definition of the order entails that $(n_1,\dot{r}_1)$ extends $(n_0,\dot{r}_0)$ in $\N\ast k(\Prk(\dot{U}))$, and hence in $\bb{S}_\mf{l}$.

Before showing that (3) and (5) of Definition \ref{def:ce} hold, we need a claim that will be useful for verifying (3) and (5). Thus our third claim is that if $(n,\dot{r})\in\bb{S}_\mf{l}$ and $(m,\dot{q},\dot{\bar{r}})\in W_\al$ forces that $(\dot{n},\dot{r})\in\dot{\bb{S}}_{\mf{l}}$, then $(m,\dot{q},\dot{n},\dot{r})$ is in $\Q$. To see this, fix a condition $(m_0,\dot{q}_0,\dot{\bar{r}}_0)\in W_\al$, and we will show that $(m,\dot{q},\dot{n},\dot{r})$ is compatible with $\iota(m_0,\dot{q}_0,\dot{\bar{r}}_0)$. Since $(n,\dot{r})$ is in $\bb{S}_{\mf{l}}$, $n$ does not force that $\dot{r}$ is incompatible with $\bar{r}_0$. Thus we can find an $N[W_\al]$-generic filter $G^*_\N$ containing $n$ so that in $N[W_\al][G^*_\N]=N[G\ast H_\al\ast (R\times G^*_\N)]$, $\bar{r}_0$ and $r$ are compatible. Let $r_1$ be below them both. Then $N[G\ast H_\al\ast G^*_\N]$ satisfies that $r_1$ extends $\bar{r}_0$ and $r$. We may therefore find $n_1\leq_\N n$ which forces that $\dot{r}_1$ extends $\dot{r}$ and $\bar{r}_0$. Fix a condition $(m_1,\dot{q}_1)\in G\ast H_\al$ extending $(m,\dot{q})$ and $(m_0,\dot{q}_0)$ which forces that $\dot{n}_1$ forces that $\dot{r}_1$ extends $\dot{r}$ and $\dot{\bar{r}}_0$. Then $(m_1,\dot{q}_1,\dot{n}_1,\dot{r}_1)$ extends both the starting condition $(m,\dot{q},\dot{n},\dot{r})$ and $\iota(m_0,\dot{q}_0,\dot{\bar{r}}_0)$.

Our next claim is that (3) of Definition \ref{def:ce} holds. We will show the contrapositive. So assume that $(m_i,\dot{q}_i,\dot{n}_i,\dot{r}_i)$, for $i\in 2$, are conditions in the quotient so that $(n_0,\dot{r}_0)$ and $(n_1,\dot{r}_1)$ are compatible in $\bb{S}_\mf{l}$. We show that the $(m_i,\dot{q}_i,\dot{n}_i,\dot{r}_i)$ are compatible in $\Q$. Since $(n_0,\dot{r}_0)$ and $(n_1,\dot{r}_1)$ are compatible in $\bb{S}_\mf{l}$, fix $(n_2,\dot{r}_2)\in\bb{S}_\mf{l}$ which extends both. Let $(m_2,\dot{q}_2,\dot{\bar{r}}_2)\in G\ast H_\al\ast W_\al$ be a condition which forces that $(\dot{n}_2,\dot{r}_2)\in\dot{\bb{S}}_\mf{l}$ and which forces that $(\dot{n}_2,\dot{r}_2)$ extends each $(\dot{n}_i,\dot{r}_i)$; by extending further if necessary, we may assume that $(m_2,\dot{q}_2)$ extends each $(m_i,\dot{q}_i)$. Then by our third claim, $(m_2,\dot{q}_2,\dot{n}_2,\dot{r}_2)$ is in the quotient $\Q$. Since it is below both $(m_i,\dot{q}_i,\dot{n}_i,\dot{r}_i)$, this completes the proof of our fourth claim.

Our fifth and final claim is that (5) of Definition \ref{def:ce} holds. Thus fix $(n,\dot{r})\in\bb{S}_\mf{l}$, and we will find a condition in $\Q$ which maps below it. In fact, we will show that $\vp$ is surjective. Since $(n,\dot{r})\in\bb{S}_\mf{l}$, we may find a condition $(m,\dot{q},\dot{\bar{r}})\in W_\al$ which forces $(\dot{n},\dot{r})\in\dot{\bb{S}}_\mf{l}$. By our third claim, $(m,\dot{q},\dot{n},\dot{r})$ is in the quotient. Since $\vp(m,\dot{q},\dot{n},\dot{r})=(n,\dot{r})$, we're done.
\end{proof}

\subsubsection{$\neg\AP(\lam)$ holds: setting up the argument.} For this case, we recycle notation and let $\dot{U}$ be an $\bb{M}^*_{\mf{l}}$-name for a normal measure on $\ka$. Define
$$
\bb{W}^*:=\bb{M}^*_{\mf{l}}\ast(\dot{\ps}_{\lam^+}\times\Prk(\dot{U})),
$$
where $\dot{\ps}_{\lam^*}$ is an $\bb{M}^*_{\mf{l}}$-name for a standard club adding iteration to achieve $\CSR(\lam)$ (see Fact \ref{fact:GLS}(1)). We see that $\bb{W}^*$ preserves all cardinals $\leq\ka^+$ and all cardinals $\geq\lam^+$, forces $\lam=\ka^{++}$, and forces $\cf(\ka)=\om\we\neg\AP(\lam)\we\CSR(\lam)$ (using Fact \ref{th:GK} to see that the Prikry forcing preserves the failure of $\AP(\lam)$). So we must check that $\bb{W}^*$ forces $\TP(\lam)$. 
\begin{remark}
It suffices to verify that for each $\al<\lam^+$, $\TP(\lam)$ holds after forcing with
$$
\bb{W}^*_\al:=\bb{M}^*_{\mf{l}}\ast(\dot{\ps}_\al\times\Prk(\dot{U})).
$$
\end{remark}

Using the definition of $\mf{l}$, we may create a weakly compact embedding $k:M\lra N$ which satisfies the assumptions of Lemma \ref{lemma:ce}. Let $\dot{q}^\al$ be a $k(\bb{M}^*_{\mf{l}})$-name which satisfies the conclusion of Lemma \ref{lemma:ce}. The next lemma is similar to Lemmas \ref{lemma:ce,AP} and \ref{lemma:ce,de} both in the statement and the proof. We recycle notation from the previous subsection and let $\dot{\N}$ also denote, in this case, the name so that there is a dense embedding from $k(\bb{M}^*_\mf{l})$ to $\bb{M}^*_\mf{l}\ast\dot{\ps}_\al\ast\dot{\N}$

\begin{lemma}\label{lemma:ce,nAP}
 The map $\iota^*$ given by
$$
(m,\dot{q},\dot{r})\mapsto (m,\dot{q},1_{\dot{\bb{N}}},\dot{r})
$$
is a complete embedding from $\bb{W}^*_\al$ to $(\bb{M}^*_\mf{l}\ast\dot{\ps}_\al\ast\dot{\bb{N}}^*_\mf{l})\ast k(\Prk(\dot{U}))$.

Moreover, given a $V$-generic $W^*_\al=G\ast H_\al\ast R$ over $\bb{W}^*_\al$, there is a dense embedding from the quotient of $(\bb{M}^*_\mf{l}\ast\dot{\ps}_\al\ast\dot{\bb{N}}^*_\mf{l})\ast k(\Prk(\dot{U}))$ by $\iota^*[W^*_\al]$ to the poset $\bb{S}^*_\mf{l}$, defined analogously to $\bb{S}_\mf{l}$ from Lemma \ref{lemma:ce,de}.
\end{lemma}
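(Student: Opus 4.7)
The plan is to mirror the proofs of Lemmas \ref{lemma:ce,AP} and \ref{lemma:ce,de} essentially verbatim, replacing $\bb{M}_\mf{l}$ with $\bb{M}^*_\mf{l}$ and $\dot{\N}$ with $\dot{\N}^*_\mf{l}$ throughout. This works because the only structural difference between $\bb{M}_\mf{l}$ and $\bb{M}^*_\mf{l}$ is that the collapsing coordinate of the latter has domain restricted to $A^*$ rather than $A$, and this change affects none of the properties used in the earlier proofs. In particular, by Lemma \ref{lemma:tailprojections}, $\bb{M}^*_\mf{l}$ is $\lam$-c.c.\ and $\ka$-directed closed, and there is a dense embedding from $k(\bb{M}^*_\mf{l})$ to $\bb{M}^*_\mf{l}\ast\dot{\ps}_\al\ast\dot{\N}^*_\mf{l}$; Fact \ref{fact:GLS}(1) ensures that $\dot{\ps}_\al$ is forced to be $\lam$-distributive; and Lemma \ref{lemma:ce} was stated uniformly for both versions of Mitchell forcing, so we have a $k(\bb{M}^*_\mf{l})$-name $\dot{q}^\al$ in $N$ giving a strong master condition for $k$ restricted to the club-adding iteration.

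For the first part of the lemma, I would check the four clauses of Definition \ref{def:ce} for $\iota^*$. Clauses (1) and (2) follow from definitions together with elementarity of the appropriate lift of $k$, exactly as in Lemma \ref{lemma:ce,AP}. For clause (3), I would prove the contrapositive: given two $\iota^*$-images with a common extension $(m,\dot{q},\dot{n},\dot{r})$ in the target, I would force below this condition in $N$ to obtain a lift $k:M[G]\to N[G\ast H_\al\ast G^*_{\N^*}]$, observe that the interpretations $r,r_0,r_1$ all lie in $\Prk(U)$ and are pointwise fixed by $k$ (since $\crit(k)=\lam>\ka$), and then use elementarity to pull a common extension back to $M[G]$ and strengthen $(m,\dot{q})$ to force this. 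For clause (4), I would take an arbitrary maximal antichain $A$ of $\bb{W}^*_\al$, project it in any $V$-generic $G\ast H_\al$ to a maximal antichain $\bar{A}$ of $\Prk(U)$ in $V[G\ast H_\al]$, and note that since $\Prk(U)$ is $\ka$-linked we have $|\bar{A}|\leq\ka$; then the $\lam$-distributivity of $\ps_\al$ over $V[G]$ puts $\bar{A}\in V[G]$, and the $<\lam$-closure of $M[G]$ over $V[G]$ puts $\bar{A}\in M[G]$, so $\crit(k)=\lam$ yields $k(\bar{A})=\bar{A}$ maximal in $k(\Prk(U))$, from which the maximality of $\iota^*[A]$ follows as in Lemma \ref{lemma:ce,AP}.

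For the ``moreover'' clause, I would define $\vp^*:(m,\dot{q},\dot{n},\dot{r})\mapsto(\dot{n}[G\ast H_\al],\dot{r})$ from the quotient to $\bb{S}^*_\mf{l}$ (replacing names by equivalent $\N^*_\mf{l}$-names where needed) and verify that $\vp^*$ is a dense embedding. The crucial technical step, mirroring the third claim in Lemma \ref{lemma:ce,de}, is the converse direction: if $(n,\dot{r})\in\bb{S}^*_\mf{l}$ and some $(m,\dot{q},\dot{\bar{r}})\in W^*_\al$ forces $(\dot{n},\dot{r})\in\dot{\bb{S}}^*_\mf{l}$, then $(m,\dot{q},\dot{n},\dot{r})$ lies in the quotient. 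This is shown by directly verifying compatibility with an arbitrary $\iota^*$-image: one uses the defining property of $\bb{S}^*_\mf{l}$ to find an $N[W^*_\al]$-generic $G^*_{\N^*}$ containing $n$ in which $r$ and $\bar{r}_0$ are compatible, and then assembles a common extension using a suitable $(m_1,\dot{q}_1)\in G\ast H_\al$. Order-preservation, clause (3), and the density (in fact surjectivity) of $\vp^*$ then follow exactly as in Lemma \ref{lemma:ce,de}.

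The main ``obstacle'' is really just bookkeeping: one must verify that each application of the Mitchell-forcing analysis used in Lemmas \ref{lemma:ce,AP} and \ref{lemma:ce,de} transfers correctly to $\bb{M}^*_\mf{l}$ and $\N^*_\mf{l}$. Because Lemma \ref{lemma:tailprojections}(3), Lemma \ref{lemma:ce}, and Fact \ref{fact:GLS}(1) all apply uniformly to the starred versions, and because the argument for clause (4) relies only on the $\lam$-c.c., the $\lam$-distributivity of $\ps_\al$, and the $<\lam$-closure of $M$, no genuinely new ingredient is required beyond the routine replacement of $A$ by $A^*$ in the Mitchell analysis.
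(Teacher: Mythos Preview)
Your proposal is correct and takes essentially the same approach as the paper: the paper does not give a separate proof of this lemma, merely noting that it ``is similar to Lemmas \ref{lemma:ce,AP} and \ref{lemma:ce,de} both in the statement and the proof.'' Your outline accurately records the relevant structural facts (Lemma \ref{lemma:tailprojections}(3), Fact \ref{fact:GLS}(1), the $\ka$-linkedness of $\Prk(U)$, and the closure of $M[G]$) that make the earlier arguments transfer verbatim to the starred posets.
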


\subsubsection{Both cases (without collapsing): quotient analysis and branch lemmas.}

Looking at what is in common with both of the above cases, we have the following data:
\begin{enumerate}
    \item a weakly compact Laver diamond $\mf{l}$;
    \item  a version of Mitchell forcing built relative to $\mf{l}$, which we abbreviate as $\bb{M}$; 
    \item $\bb{M}$-names $\dot{U}$ and $\dot{\ps}_{\lam^+}$ for, respectively, a normal measure on $\ka$ and an iterated club adding to achieve $\CSR(\lam)$;
    \item an ordinal $\al<\lam^+$;
    \item  a name $\dot{T}$ in $\bb{M}\ast(\dot{\ps}_\al\times\Prk(\dot{U}))$ for a $\lam$-tree;
    \item a weakly compact embedding $k:M\lra N$ where
    \begin{enumerate}
        \item $\crit(k)=\lam$;
        \item $M$ contains  the parameters $\mf{l}$, $\bb{M}\ast(\dot{\ps}_\al\times\Prk(\dot{U}))$, $\dot{T}$, and the sequence $\vec{M}^\al$ from Lemma \ref{lemma:parameter};
        \item $k(\mf{l})(\lam)=\dot{\ps}_\al$;
        \item $N$ is closed under $<\lam$-sequences and (because $\vec{M}^\al\in M$) contains $k\res\al$ as an element.
    \end{enumerate}
\item A dense embedding from $k(\bb{M})$ to $\bb{M}\ast\dot{\ps}_\al\ast\dot{\N}$ for some $\dot{\N}$, and $\dot{\N}$ is forced to be the projection of $\Add(\ka,[\lam,k(\lam))$ and a $\ka^+$-closed term forcing $\dot{\bb{T}}$ (see Lemma \ref{lemma:tailprojections});
\item a $k(\bb{M})$-name $\dot{q}^\al$ in $N$ for a condition in $k(\dot{\ps}_\al)$ satisfying Lemma \ref{lemma:ce}.
\end{enumerate} 
To deal with both cases simultaneously, we introduce the abbreviation
$$
\tilde{\bb{W}}_\al:=\bb{M}\ast(\dot{\ps}_\al\times\Prk(\dot{U})).
$$
Using this notation, we additionally have
\begin{enumerate}
    \item[(9)] a complete embedding $\tilde{\iota}$ from $\tilde{\bb{W}}_\al$ to $\bb{M}\ast\dot{\ps}_\al\ast\dot{\N}\ast k(\Prk(\dot{U}))$;
    \item[(10)] the quotient of $\bb{M}\ast\dot{\ps}_\al\ast\dot{\N}\ast k(\Prk(\dot{U}))$ by the complete embedding $\tilde{\iota}$ is forcing equivalent to $\bb{S}$, where $\bb{S}$ stands for $\bb{S}_\mf{l}$ in the $\AP(\lam)$ case and $\bb{S}^*_\mf{l}$ in the $\neg\AP(\lam)$ case. 
\end{enumerate}

Our goal is to show that there is a cofinal branch through $T$ in the extension by $\bb{S}$, and that $\bb{S}$ cannot add a cofinal branch through $T$. If we can complete these two tasks, we will complete the argument in the non-collapsing case.

\begin{lemma}\label{lemma:Sbranch} Let $W_\al$ be $V$-generic over $\tilde{\bb{W}}_\al$. Then $\bb{S}$ forces over $N[W_\al]$ that there is a cofinal branch $\dot{\tau}$ through $T$.
\end{lemma}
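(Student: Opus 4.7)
My plan is to force with $\bb{S}$ over $N[W_\al]$ and, in the resulting extension, build a lift of $k$ whose domain contains the tree $T=\dot T[W_\al]$. A node at level $\lam$ of $k(T)$ will then yield the desired cofinal branch, giving rise to an $\bb{S}$-name $\dot\tau$.

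Write $W_\al = G\ast H_\al\ast R$, and let $G_\N\ast R^*$ be the $\bb{S}$-generic. Using the dense embedding of $k(\bb{M})$ into $\bb{M}\ast\dot\ps_\al\ast\dot\N$ from item~(7), the combined filter $G\ast H_\al\ast G_\N$ determines an $N$-generic filter $G^*$ for $k(\bb{M})$ with $G^*\supseteq k[G]$, giving a first lift $k\colon M[G]\to N[G^*]$. Next, since $\crit(k)=\lam>\ka$ and every condition of $\Prk(U)\in V[G]$ consists of a stem in $V_\ka$ together with a subset of $\ka$—both fixed by $k$—we have $k[R]=R$; the compatibility clause defining $\bb{S}$, together with genericity of $R^*$, forces $R\subseteq R^*$, so $k$ lifts further to $k\colon M[G][R]\to N[G^*][R^*]$.

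The key step is the lift through $\ps_\al$. Lemma~\ref{lemma:ce} supplies, in $N[G^*]$, a strong master condition $q^\al=\dot q^\al[G^*]\in k(\ps_\al)$ that is the greatest lower bound of $k[H_\al]$. Inside $V[W_\al][G_\N\ast R^*]$, I will construct an $N[G^*][R^*]$-generic filter $H^*_\al$ for $k(\ps_\al)$ containing $q^\al$ by a diagonal argument of length $\lam$, using that $|N[G^*][R^*]|=\lam$ in $V$ (so only $\lam$-many dense sets need be met) and that $k(\ps_\al)$ is $k(\lam)$-distributive within $N[G^*][R^*]$. Combining $H^*_\al$ with the earlier lifts extends $k$ to $k\colon M[G\ast H_\al\ast R]\to N[G^*\ast H^*_\al\ast R^*]$. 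With $T$ now in the domain, $k(T)$ has height $k(\lam)>\lam$; since every node of $T$ at a level below $\lam$ is fixed by $k$, $T$ embeds as an initial segment of $k(T)$ through level $\lam$, and any $x\in k(T)$ at level $\lam$ determines a cofinal branch through $T$ via its $k(T)$-predecessors that lie in $T$.

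The principal obstacle is the construction of $H^*_\al$. Because $k(\ps_\al)$ is only $k(\lam)$-distributive in $N[G^*][R^*]$ and not closed in $V$, the limit stages of the length-$\lam$ recursion require care: the trick is to arrange that the initial segments of the decreasing sequence of conditions built so far remain inside $N[G^*][R^*]$, so that distributivity there supplies lower bounds below $q^\al$; the strong-master-condition property of $q^\al$ then ensures that the resulting filter $H^*_\al$ genuinely extends $k[H_\al]$ in a manner suitable for the lift of $k$.
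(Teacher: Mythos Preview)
Your overall strategy—lifting $k$ and reading off a branch from $k(T)$—matches the paper's, and your treatment of the lifts through $G$ and $R$ is fine. However, the construction of $H^*_\al$ contains a genuine gap: $k(\lam)$-distributivity of $k(\ps_\al)$ in $N[G^*][R^*]$ does \emph{not} supply lower bounds for decreasing sequences of conditions. Distributivity says that the intersection of fewer than $k(\lam)$ dense open sets is dense; it says nothing about extending a descending chain at a limit stage. Club-adding iterations are typically not even $\om_1$-closed (a closed bounded subset of $X$ has no proper end-extension through a limit ordinal $\gamma\notin X$), so your length-$\lam$ recursion can simply die at a limit stage regardless of whether the initial segment lies in $N[G^*][R^*]$.

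The paper sidesteps this entirely by not attempting to place $H^*_\al$ inside any particular model. Instead, one simply forces with $k(\ps_\al)$ below $q^\al$ over $N[G^*\ast R^*]$ to obtain some $H^*_\al$, completes the lift of $k$, and finds the branch $\tau$ in the further extension $N[G^*\ast H^*_\al\ast R^*]$. Now distributivity is invoked correctly: since $|\tau|=\lam<k(\lam)$ and $k(\ps_\al)$ is $k(\lam)$-distributive in $N[G^*\ast R^*]$ (Lemma~\ref{lemma:distributiveEaston} is used here to push distributivity through the $\ka^+$-c.c.\ Prikry forcing $k(\Prk(U))$), the branch $\tau$ already lies in $N[G^*\ast R^*]$. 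Thus distributivity is used to pull the \emph{branch} back, not to manufacture a generic—this is the idea your argument is missing.
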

\begin{proof}
Since $\bb{S}$ is forcing equivalent to the quotient of $\bb{M}\ast\dot{\ps}_\al\ast\dot{\N}\ast k(\Prk(\dot{U}))$ by $W_\al$, it suffices to show that this quotient forces that there is a cofinal branch through $T$. Let $\Q$ denote the quotient. Forcing over $N[W_\al]$ by $\Q$ extends $W_\al=G\ast H_\al\ast R$ to an $N$-generic filter $G\ast H_\al\ast G^*_\N\ast R^*$ which contains $\tilde{\iota}[W_\al]$. Let $H^*_\al$ be an $N[G\ast H_\al\ast G^*_\N\ast R^*]$-generic filter over $k(\ps_\al)$ containing $q^\al$. Since $k(\bb{M})$ is forcing equivalent to $\bb{M}\ast\dot{\ps}_\al\ast\dot{\N}$, let $G_{k(\bb{M})}$ be an $N$-generic filter over $k(\bb{M})$ so that $N[G_{k(\bb{M})}]=N[G\ast H_\al\ast G^*_\N]$ and so that $G\seq G_{k(\bb{M})}$. Then we may lift $k:M[G]\to N[G_{k(\bb{M})}]$. Since $H^*_\al$ contains $q^\al$ and $q^\al\leq k[H_\al]$, we may lift to $k:M[G\ast H_\al]\to N[G_{k(\bb{M})}\ast H^*_\al]$. Finally, since $R\seq R^*$ (because $G\ast H_\al\ast G^*_\N\ast R^*$ contains $\tilde{\iota}[W_\al]$) we may lift $k$ to $k:M[G\ast (H_\al\times R)]\to N[G_{k(\bb{M})}\ast (H^*_\al\times R^*)]$. Since the restriction of $k(T)$ (which has height $k(\lam)>\lam$) to height $\lam$ equals $T$, $T$ contains a cofinal branch $\tau$ in $N[G_{k(\bb{M})}\ast (H^*_\al\times R^*)]$. However, $k(\ps_\al)$ is $k(\lam)$-distributive over $N[G_{k(\bb{M})}]$, and by Lemma \ref{lemma:distributiveEaston}, $k(\ps_\al)$ is still $k(\lam)$-distributive in $N[G_{k(\bb{M})}\ast R^*]$. Thus the cofinal branch $\tau$ is a member of $N[G_{k(\bb{M})}\ast R^*]$. Since this was an arbitrary generic extension by $\Q$, this completes the proof.
\end{proof}

\begin{remark}\label{rmk:term} 
To complete the proofs of Theorem \ref{thm:WeaklyCompactCases} (1) and (2) in the non-collapsing case, we will show that $\bb{S}$ does not add the branch $\tau$.

Here we also observe that the term forcing $\bb{T}$ from (7) is in fact $\ka^+$-closed in $V[G\ast H_\al]$, since $N[G\ast H_\al]$ is closed under $<\lam$-sequences from $V[G\ast H_\al]$.
\end{remark}

We wish to show that $\bb{S}$ satisfies an appropriate branch lemma. As we will be repeating these arguments in the next subsection, but in a more difficult context, we will just state the relevant lemmas, deferring the more difficult analogues of their proofs until later.

We first have a ``quotient analysis", i.e., an analysis of when a condition in $\Prk(U)$ forces a condition in $\N\ast k(\Prk(\dot{U}))$ out of $\dot{\bb{S}}$. This will take place in the extension by $\bb{M}\ast\dot{\ps}_\al$, i.e., before forcing with $\Prk(U)$, since $\bb{T}$ is $\ka^+$-closed in $V[G\ast H_\al]$.

\begin{lemma} In $V[G\ast H_\al]$, let $\bar{r}\in\Prk(U)$, and let $\dot{r}$ be a name in $\N$ for a condition in $k(\Prk(\dot{U}))$. Let $n$ be a condition in $\N$ so that $n$ decides the stem of $\dot{r}$, say as $\operatorname{stem}(\dot{r})$. Then $\bar{r}$ forces that $(n,\dot{r})$ is not in $\dot{\bb{S}}$ iff one of the following is true:
\begin{enumerate}
    \item the stems of $\dot{r}$ and $\bar{r}$ are incompatible (i.e., do not have a common end-extension);
    \item $\ell(\dot{r})>\ell(\bar{r})$ and $\operatorname{stem}(\dot{r})\res[\ell(\bar{r}),\ell(\dot{r}))\not\subseteq A_{\bar{r}}$;
    \item $\ell(\bar{r})>\ell(\dot{r})$ and $n$ forces in $\N$ that $\operatorname{stem}(\bar{r})\res[\ell(\dot{r}),\ell(\bar{r}))\not\subseteq\dot{A}_{\dot{r}}$.
\end{enumerate}
\end{lemma}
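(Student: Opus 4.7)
The plan is to prove both directions of the equivalence separately; the key reduction is that $\bar{r}$ forces $(n, \dot{r}) \notin \dot{\bb{S}}$ iff the set of $\bar{r}_2 \leq_{\Prk(U)} \bar{r}$ with $n \Vdash_\N \dot{r} \perp \bar{r}_2$ in $k(\Prk(\dot{U}))$ is dense below $\bar{r}$.

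For the $(\Leftarrow)$ direction, assume one of (1), (2), or (3) holds; in each case I verify that $n$ outright forces $\dot{r} \perp \bar{r}$, so $\bar{r}$ itself suffices as a witness. Condition (1) gives an absolute obstruction to any common extension, since $n$ decides $\stem(\dot{r})$. Condition (2) prevents $\dot{r}$ from refining $\bar{r}$, because the points of $\stem(\dot{r})$ past $\stem(\bar{r})$ fall outside $A_{\bar{r}}$, and any common extension would have to include them in its stem. Condition (3) is by hypothesis a statement that $n$ forces the relevant obstruction in $\N$. In each case $(n, \dot{r}) \notin \bb{S}$ via $\bar{r}$, so $\bar{r} \Vdash (n, \dot{r}) \notin \dot{\bb{S}}$ by the definition of $\bb{S}$.

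For the $(\Rightarrow)$ direction, I argue the contrapositive: assuming none of (1)-(3), I will construct $\bar{r}_1 \leq_{\Prk(U)} \bar{r}$ such that no $\bar{r}_2 \leq \bar{r}_1$ has $n \Vdash \dot{r} \perp \bar{r}_2$; then $\bar{r}_1 \Vdash (n, \dot{r}) \in \dot{\bb{S}}$. Let $t$ be the longer of $\stem(\bar{r})$ and $\stem(\dot{r})$; by the negation of (1) the stems are compatible so $t$ is well-defined, and by the negation of (2), $(t, A_{\bar{r}}) \leq \bar{r}$. Define
\[
A'_1 := \{\al \in A_{\bar{r}} : \al > \max(t) \text{ and some } n' \leq n \text{ in } \N \text{ forces } \al \in \dot{A}_{\dot{r}}\},
\]
and set $\bar{r}_1 := (t, A'_1)$. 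For any $\bar{r}_2 = (s_2, B_2) \leq \bar{r}_1$, conditions (1) and (2) fail automatically because $s_2 \supseteq t \supseteq \stem(\dot{r})$. For the negation of (3) for $\bar{r}_2$, I decompose $s_2 \setminus \stem(\dot{r})$ as $(t \setminus \stem(\dot{r})) \cup (s_2 \setminus t)$: the first piece is either empty or, by the negation of (3) for $\bar{r}$, is forced into $\dot{A}_{\dot{r}}$ by some extension of $n$; each element of the second piece individually admits such an extension by the definition of $A'_1$. Using the $\ka^+$-closure of $\N$ from Remark \ref{rmk:term} to amalgamate these finitely many extensions, I obtain $n^* \leq n$ forcing $s_2 \setminus \stem(\dot{r}) \subseteq \dot{A}_{\dot{r}}$, so $n$ does not force $\dot{r} \perp \bar{r}_2$.

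The main obstacle will be showing $A'_1 \in U$, which is needed for $\bar{r}_1$ to be a valid Prikry condition. The argument proceeds by contradiction: if $A_{\bar{r}} \setminus A'_1$ were in $U$, then since $\crit(k) = \lam > \ka$ makes $k$ the identity on $P(\ka)$, this complement lies in $k(U)$ as well, so $n$ would force $\dot{A}_{\dot{r}}$ to miss a $k(U)$-large set, contradicting $n \Vdash_\N \dot{A}_{\dot{r}} \in k(\dot{U})$. This careful bookkeeping of the interplay between $U$, $k(U)$, and the forcing relation on $\N$-names is the delicate part of the proof; the rest is a clean stem-by-stem verification.
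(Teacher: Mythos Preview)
Your reduction and the $(\Leftarrow)$ direction are fine, but the $(\Rightarrow)$ direction has a genuine gap at the amalgamation step. You produce, for the piece $t\setminus\stem(\dot r)$, an extension $n'\leq n$, and for each $\al\in s_2\setminus t$ an extension $n_\al\leq n$, and then invoke the $\ka^+$-closure of $\N$ to find a common $n^*$ below all of them. But closure only gives lower bounds for \emph{decreasing chains}, not for arbitrary finite sets of extensions of a fixed condition. There is no reason $n'$ and the various $n_\al$ are pairwise compatible: each is just \emph{some} extension of $n$ forcing a different statement about $\dot A_{\dot r}$, and since $\N$ has a nontrivial Cohen component, such extensions can certainly be incompatible. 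Your definition of $A'_1$ guarantees only that for each $\al$ individually there is an extension forcing $\al\in\dot A_{\dot r}$; it does not let you process the finitely many points of $s_2\setminus\stem(\dot r)$ sequentially either, because once you pass to $n_{\al_0}$ you have left the class of conditions relative to which $A'_1$ was defined.

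The paper does not prove this lemma directly but defers to the collapsing analogue (Lemma~\ref{lemma:qa}), whose method transfers cleanly and avoids this trap. Rather than building a single $\bar r_1$ forcing $(n,\dot r)\in\dot{\bb S}$, one \emph{forces} with $\N$ below an appropriate extension of $n$ (namely $n$ itself in the equal-length and $\ell(\dot r)>\ell(\bar r)$ cases, or an $n'\leq n$ witnessing the failure of (3) in the $\ell(\bar r)>\ell(\dot r)$ case). In the resulting generic extension $N[G^*]$ the interpreted condition $r:=\dot r[G^*]$ is genuinely compatible with $\bar r$ in $k(\Prk(U))$, so one may force below a common extension to obtain $R^*$, set $\bar R:=R^*\cap\Prk(U)$, and verify directly that in $V[G\ast H_\al\ast\bar R]$ every $\bar r'\in\bar R$ is compatible with $r$ --- whence $n$ cannot force $\dot r\perp\bar r'$. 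This gives a generic containing $\bar r$ in which $(n,\dot r)\in\bb S$, which is exactly the contrapositive you need, and it sidesteps the amalgamation problem entirely by working in one fixed $\N$-extension rather than trying to coordinate finitely many of them in the ground model.
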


We now suppose, for a contradiction, that for some $V[G\ast H_\al]$-generic $R$, $\dot{\tau}$ names a new branch through $T$. We suppose for simplicity that the empty condition of $\Prk(U)$ forces this about $\dot{\tau}$. The next item shows that, below a certain condition, we only need to extend the closed term part $\bb{T}$ of that condition in order to decide information about $\dot{\tau}$. We will write conditions in $\bb{N}$ as pairs $(p,f)$ in order to isolate extensions in the term ordering.

\begin{lemma}\label{lemma:extendclosed1}
In $V[W_\al]$ (i.e., after the forcing $\Prk(U)$), there is a condition $(p,f,\dot{r})$ in $\bb{S}$ so that for all $x\subset T$, all $\be<\lam$, and all $(p',f',\dot{r}')$ which extend $(p,f,\dot{r})$ in $\bb{S}$, the following implication holds: if $f'\leq_{\bb{T}}f$ and $(p',f',\dot{r}')$ forces in $\bb{S}$ that $\dot{\tau}\res\be=\check{x}$, then in fact $(p,f',\dot{r})$ forces $\dot{\tau}\res\be=\check{x}$.
\end{lemma}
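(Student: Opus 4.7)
The plan is to prove this by contradiction via a binary fusion of height $\ka$ along the $\ka^+$-closed term forcing $\bb{T}$, producing more distinct chains in $T$ at a bounded level than $T$ can accommodate. Suppose no condition $(p,f,\dot{r})\in\bb{S}$ satisfies the stated conclusion. Then below every condition there exist $\be<\lam$, $x$, and an extension $(p',f',\dot{r}')$ with $f'\leq_{\bb{T}}f$ forcing $\dot{\tau}\res\be=\check{x}$, together with a further extension $(p'',f'',\dot{r}'')\leq (p,f',\dot{r})$ forcing $\dot{\tau}\res\be$ equal to some $\check{y}\neq\check{x}$. Passing to a common $\bb{T}$-lower bound of $f'$ and $f''$, one obtains two conditions in $\bb{S}$ extending the given condition, sharing a common $f$-coordinate, which force distinct initial segments of $\dot{\tau}$ at level $\be$.

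Iterating this, I would recursively construct a splitting tree $\la (p_\si,f_\si,\dot{r}_\si) : \si\in 2^{<\ka}\ra$ of conditions in $\bb{S}$ below an arbitrary starting condition, together with ordinals $\be_\si<\lam$ and distinct nodes $x^0_\si,x^1_\si$ at level $\be_\si$ of $T$, such that at each splitting node the children force $\dot{\tau}\res\be_\si=\check{x}^0_\si$ and $\check{x}^1_\si$ respectively while sharing a common $f$-coordinate. At limit stages $\de<\ka$, the $\ka^+$-closure of $\bb{T}$ handles the $f$-coordinate, the $\ka$-closure (with strategic bookkeeping) of $\Add(\ka,[\lam,k(\lam)))$ handles the Cohen part, and the normality-based completeness of the direct-extension ordering of $k(\Prk(\dot{U}))$ handles the Prikry name; the stem-criterion of the preceding lemma is invoked throughout to confirm that the triples produced remain in $\bb{S}$.

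The contradiction is a cardinality count. The set $\{\be_\si:\si\in 2^{<\ka}\}$ has size at most $\ka$, and since $\cf(\lam)=\lam>\ka$ in $V[W_\al]$, we may fix $\be^*:=\sup_\si\be_\si<\lam$. Each branch $b\in 2^\ka$ of the fusion tree produces a $\ka$-descending chain of conditions whose forced values assemble into a chain $P_b$ in $T$ below level $\be^*$; for $b_0\neq b_1$ diverging first at some $\si$, the defining property $x^0_\si\neq x^1_\si$ ensures $P_{b_0}\neq P_{b_1}$. Hence $b\mapsto P_b$ is an injection from $2^\ka$ into the collection of chains in $T$ with top at or below level $\be^*$, which has cardinality at most $|T_{\leq\be^*}|\leq\ka^+$. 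But $2^\ka=\ka^{++}$ in $V[W_\al]$ (as the Mitchell forcing makes $2^\ka=\lam$, while $\ps_\al$ is $\lam$-distributive and $\Prk(U)$ preserves $2^\ka$), yielding $\ka^{++}\leq\ka^+$, a contradiction.

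The main obstacle is the coordination of the three coordinates $p$, $f$, $\dot{r}$ of conditions in $\bb{S}$ at limit stages of the fusion, since only $\bb{T}$ is straightforwardly $\ka^+$-closed, and the Cohen and Prikry components demand delicate strategic play and diagonal-intersection arguments (invoking the stem-criterion of the preceding lemma) to keep limits genuinely inside $\bb{S}$. Verifying that distinct splitting pairs truly furnish distinct $T$-nodes — rather than conditions that merely look incompatible but collapse upon further extension — is the crux, and this interaction is doubtless why the authors defer the full proof to the harder collapsing context of Subsection \ref{ss:wcDown}.
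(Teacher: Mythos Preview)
Your approach diverges from the paper's and contains a genuine gap in the final counting step.

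The paper (in the analogous Lemma \ref{lemma:extendTerm}) does \emph{not} build a height-$\ka$ binary fusion; that splitting-tree construction is what comes \emph{after} the present lemma and relies on its conclusion to vary only the $\bb{T}$-coordinate while holding $p$ and $\dot r$ fixed. To prove the lemma itself, the paper passes back to $V[G\ast H_\al]$ (so that $\bb{T}$ is genuinely $\ka^+$-closed), fixes a Prikry condition $\bar r$ forcing the assumed failure, and runs a \emph{linear} recursion of length $\ka^+$: at stage $\be$ one extends only the $\bb{T}$-part to some $f_\be$ and then freshly produces, via the failure hypothesis, a pair $(p^i_\be,f_\be,\dot r^i_\be)$, $i<2$, forcing incompatible values of $\dot\tau$. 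The $p$- and $\dot r$-coordinates are never threaded through the recursion. The contradiction is obtained by a freezing-out argument: thin the $\ka^+$-many stages by the $\ka^+$-Knaster and stem-counting properties of the Cohen and Prikry components to find $\be<\de$ whose data amalgamate, and then observe that the amalgamated conditions force contradictory information about a single branch.

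The gap in your argument is the claim that the collection of chains $P_b\subseteq T_{\leq\be^*}$ has cardinality at most $|T_{\leq\be^*}|\leq\ka^+$. A chain need not have a top node, and $T\res\be^*$ can have $2^\ka$-many chains of length $\ka$; so there is no pigeonhole. The standard repair---take a lower bound along each branch $b\in 2^\ka$ and extend to decide $\dot\tau(\al^*)$ at a single fixed level, then pigeonhole on the $\ka^+$-many nodes there---requires a lower bound of the entire $\ka$-sequence $\la (p_{b\res\xi},f_{b\res\xi},\dot r_{b\res\xi}):\xi<\ka\ra$ in $\bb{S}$. The $\bb{T}$-part is fine, but neither $\Add(\ka,[\lam,k(\lam)))$ nor the Prikry component admits such $\ka$-length lower bounds. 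Eliminating the need to extend $p$ and $\dot r$ is precisely what the present lemma accomplishes, so using that move to prove it is circular. A secondary issue: you carry out the fusion in $V[W_\al]$, but after $\Prk(U)$ the cardinal $\ka$ is singular and $\bb{T}$ need no longer be $\ka^+$-closed there; the paper avoids this by working in $V[G\ast H_\al]$ and letting Prikry conditions force the relevant facts.
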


Now, back in $V[G\ast H_\al]$, let $\bar{r}\in\Prk(U)$ be a condition which forces that $(p,f,\dot{r})$ satisfies the conclusion of Lemma \ref{lemma:extendclosed1}. We will construct a splitting tree of conditions. As we move up in the splitting tree, we have to take into account what the Prikry conditions force about the conditions in the splitting tree, and so we use the closure of $\bb{T}$ to run though enough possibilities (max antichains).\footnote{There is a subtlety that we will address in the collapsing case. Namely, that term extensions of conditions in the quotient remain in the quotient.}

\begin{lemma}
In $V[G\ast H_\al]$, there exist
\begin{enumerate}
    \item a sequence $\la f_s:s\in 2^{<\ka}\ra$ of conditions below $f$ in $\bb{T}$ so that $s\sqsubseteq t$ implies $f_t\leq_\bb{T}f_s$;
    \item a sequence $\la A_s:s\in 2^{<\ka}\ra$ of maximal antichains in $\Prk(U)/\bar{r}$;
    \item for all $s\in 2^{<\ka}$, a sequence $\la\al_{s,e}:e\in A_s\ra$ of ordinals and a sequence $\la\la\ga_{s,e,0},\ga_{s,e,1}\ra:e\in A_s\ra$ of pairs of \emph{distinct} ordinals below $\ka^+$
\end{enumerate}
satisfying that for each $s\in 2^{<\ka}$, each $e\in A_s$, and each $i\in 2$,
$$
e\Vdash_{\Prk(U)}\;(p,f_{s^\frown i},\dot{r})\Vdash_{\dot{\bb{S}}}\; \dot{\tau}(\check{\al}_{s,e})=\check{\ga}_{s,e,i}.
$$
\end{lemma}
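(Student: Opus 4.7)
The plan is to build the sequences by recursion on $s \in 2^{<\ka}$, handling limits via the $\ka^+$-closure of $\bb{T}$ (Remark~\ref{rmk:term}) and successors via a density claim. At the outset I would set $f_\es := f$; at limit length $s$, take $f_s$ to be a lower bound in $\bb{T}$ of $\la f_{s\res\gamma}:\gamma<\lh(s)\ra$.

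The entire construction will be driven by the following \emph{density claim}: for every $g \leq_{\bb{T}} f_s$ and every $r^* \in \Prk(U)/\bar{r}$, there exist $r \leq r^*$, term extensions $f_0, f_1 \leq_{\bb{T}} g$, an ordinal $\al < \lam$, and distinct $\ga_0, \ga_1 < \ka^+$ with $r \Vdash_{\Prk(U)} (p, f_i, \dot{r}) \Vdash_{\dot{\bb{S}}} \dot{\tau}(\al) = \check{\ga}_i$ for each $i \in 2$. To prove it I would argue by contradiction: if the claim fails for some $g$ and $r^*$, fix a $\Prk(U)$-generic $R$ over $V[G \ast H_\al]$ containing $r^*$ together with an $\bb{S}$-generic $S$ over $V[W_\al]$ containing $(p, g, \dot{r})$, and set $\tau := \dot{\tau}[S]$. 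For each $\al < \lam$, the value $\tau(\al)$ is forced by some condition in $S$; Lemma~\ref{lemma:extendclosed1} (applied at $\be = \al + 1$) lets us replace this by a term-only extension $(p, f', \dot{r}) \in S$ with $f' \leq_{\bb{T}} g$, and this fact is in turn witnessed by some $r \in R$ with $r \leq r^*$ forcing the corresponding $\Prk(U)$-statement. The failure hypothesis then excludes two distinct values being so witnessed---otherwise a common refinement in $R$ along with concrete $f_0, f_1$ would produce the splitting assumed absent---so $\tau(\al)$ is the \emph{unique} $\ga$ for which some $r \in R \cap (\Prk(U)/r^*)$ forces ``$\exists f' \leq_{\bb{T}} g: (p, f', \dot{r}) \Vdash_{\dot{\bb{S}}} \dot{\tau}(\al) = \check{\ga}$''. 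Since this describes $\tau$ definably from parameters in $V[W_\al]$, it places $\tau \in V[W_\al]$, contradicting newness of $\dot{\tau}$.

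Granted the density claim, the successor step proceeds by a secondary recursion on $\xi < \ka$. Given $f_s$, I would build $A_s = \{r_\xi : \xi < \nu\}$ together with auxiliary $f^\xi_0, f^\xi_1 \in \bb{T}$ as follows. At stage $\xi$: if $\{r_\zeta : \zeta < \xi\}$ is already a maximal antichain below $\bar{r}$, set $\nu := \xi$ and stop. Otherwise, pick any $r' \leq \bar{r}$ incompatible with every $r_\zeta$ for $\zeta < \xi$, use the $\ka^+$-closure and directedness of $\bb{T}$ to choose $g^\xi \leq_{\bb{T}} f^\zeta_i$ for all $\zeta < \xi$ and $i < 2$, and apply the density claim with $g^\xi$ and $r'$ to extract $r_\xi \leq r'$, $f^\xi_0, f^\xi_1 \leq_{\bb{T}} g^\xi$, and $\al^\xi, \ga^\xi_0 \ne \ga^\xi_1$; record $\al_{s, r_\xi} := \al^\xi$ and $\ga_{s, r_\xi, i} := \ga^\xi_i$. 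The $\ka^+$-c.c.\ of $\Prk(U)$ forces termination at some $\nu \leq \ka$. Finally, $f_{s^\frown i}$ is set to be a lower bound in $\bb{T}$ of $\la f^\xi_i : \xi < \nu\ra$ via closure, and monotonicity of the forcing relation then yields $r_\xi \Vdash_{\Prk(U)} (p, f_{s^\frown i}, \dot{r}) \Vdash_{\dot{\bb{S}}} \dot{\tau}(\al^\xi) = \ga^\xi_i$ as required.

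The hardest step will be the density claim, and within it the use of Lemma~\ref{lemma:extendclosed1}: one must verify that the term-only extensions $(p, f', \dot{r})$ with $f' \leq_{\bb{T}} g$ actually remain members of $\bb{S}$. This is precisely the subtlety flagged in the footnote and deferred by the authors to the collapsing case.
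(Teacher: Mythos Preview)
Your proposal is correct and follows essentially the same approach as the paper. The paper does not actually prove this lemma in the non-collapsing case; it only states it, saying the construction ``uses the closure of $\bb{T}$ to run through enough possibilities (max antichains),'' and in the collapsing analogue gives only a one-paragraph sketch citing Lemma~\ref{lemma:extendTerm}, the $\ka^+$-closure of $\bb{T}$, the $\ka^+$-c.c.\ of the Prikry forcing, and Corollary~\ref{cor:termExtend}. Your density claim plus the secondary recursion building the antichain is exactly the standard way to unpack that sketch, and your proof of the density claim (showing that a failure would make $\tau$ definable in $V[W_\al]$ from $R$, $g$, $p$, $\dot r$, and the forcing relation, contradicting newness) is the right argument. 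One minor remark: in the density-claim proof you do not actually need to fix the $\bb{S}$-generic $S$; existence of a witnessing $\ga$ for each $\al$ already follows in $V[W_\al]$ from density in $\bb{S}$ below $(p,g,\dot r)$ together with Lemma~\ref{lemma:extendclosed1}, and uniqueness follows from the failure hypothesis as you argue. Introducing $S$ is harmless but unnecessary. Your identification of the key subtlety (that term extensions remain in $\bb{S}$, handled by the non-collapsing analogue of Corollary~\ref{cor:termExtend}) is also correct.
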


To complete the proof, we work in $V[G\ast H_\al]$. For each $g\in 2^\ka$, noting that $2^\ka=\ka^{++}$, let $f_g$ be a lower bound in $\bb{T}$ of the sequence $\la f_{g\res\xi}:\xi<\ka\ra$. As we will show in Corollary \ref{cor:termExtend} when we analyze this situation  in greater detail and when the collapses are present, $\bar{r}$ still forces that $(p,f_g,\dot{r})$ is in $\bb{S}$. Additionally, let $\al^*<\lam$ be above all of the $\al_{s,e}$. For each $g\in 2^\ka$, we let $e_g\leq_{\Prk(U)}\bar{r}$ be a condition, $\dot{w}_g$ a name for a condition extending $(p,f_g,\dot{r})$ in $\dot{\bb{S}}$, and $\ga_g<\ka^+$ an ordinal so that $e_g$ forces that $\dot{w}_g$ forces that $\dot{\tau}(\check{\al}^*)=\check{\ga}_g$.

Let $X\seq 2^\ka$ be of size $\ka^{++}$ so that $g\mapsto\operatorname{stem}(r_g)$ is constant on $X$. Using the fact that each level of $T$ has size $\ka^+$, find $g_0\neq g_1$ in $X$ so that $\tilde{\ga}:=\ga_{g_0}=\ga_{g_1}$. Let $s\in 2^{<\ka}$ be the initial segment on which $g_0$ and $g_1$ agree, and suppose (by relabelling if necessary) that $g_i$ extends $s^\frown\la i\ra$ for $i\in 2$. Let $\hat{e}\in\Prk(U)$ extend $e_{g_0}$ and $e_{g_1}$, as well as some element $e$ of the max antichain $A_s$. Let $R$ be $V[G\ast H_\al]$-generic over $\Prk(U)$ containing $\hat{e}$. Then $w_{g_0}$ and $w_{g_1}$ both force in $\bb{S}$ that $\dot{\tau}(\al^*)=\ga^*$. However, $w_{g_i}\leq (p,f_{s^{\frown}\la i\ra},r)$, and since $\hat{e}$ extends $e$ and $e$ forces that the $(p,f_{s^{\frown}\la i\ra},r)$ disagree about $\dot{\tau}(\check{\al}_{s,e})$, we have a contradiction to the definition of a tree ordering.

\subsection{Down to $\aleph_{\omega+2}$}\label{ss:wcDown}

In this subsection, we show how to incorporate collapses into the arguments of the previous subsection, providing all of the relevant details for the branch lemmas. Initially, our argument will separately consider some of the set-up for the $\AP$ and $\neg\AP$ cases. However, our argument will then feature a single argument which works for either case.

\subsubsection{$\AP(\lam)$ case: partial reduction.} Let $\bb{M}_\mf{l}$ abbreviate $\bb{M}_\mf{l}(\ka,\lam)$. In this case, we force over the model $V[G_\ka][\bb{F}_{(\ka,\lam)}]$ from Lemma \ref{lemma:gg} with the following poset:
$$
\bb{J}:=(\bb{M}_{\mf{l}}\times\bb{A}(\ka))\ast (\dot{\ps}_{\lam^+}\times\Pc(\dot{U},\dot{G}^g)),
$$
where $\bb{M}_{\mf{l}}\ast\dot{\ps}_{\lam^+}$ is as in the non-collapse case, and where $\dot{U}$ and $\dot{G}^g$ are $\bb{M}_\mf{l}\times\bb{A}(\ka)$ names, respectively, for a normal measure on $\ka$ and a $(\dot{U},2)$ guiding generic.
Recall that $G^g$ remains a guiding generic (and $U$ a normal measure on $\ka$) after forcing with $\ps_{\lam^+}$, by Corollary \ref{Cor:GGPres} and the fact that $\bb{A}(\ka)$ preserves that $\ps_{\lam^+}$ is $\lam$-distributive (Lemma \ref{lemma:distributiveEaston}).

Forcing with $\bb{M}_{\mf{l}}\ast\dot{\ps}_{\lam^+}$ gives us a model with $\AP(\lam)\we\CSR(\lam)\we\ka^{++}=\lam$. Forcing with $\bb{A}(\ka)$ over this model preserves $\AP(\lam)$ (since it preserves $\lam$), and preserves $\CSR(\lam)$ by Fact \ref{th:AddCSR}. In the extension by $(\bb{M}_{\mf{l}}\times\bb{A}(\ka))\ast\dot{\ps}_{\lam^+}$, $G^g$ is still a guiding generic, and so $\Pc(U,G^g)$ forces that $\ka=\aleph_\om$, that $\lam=\aleph_{\om+2}$, and preserves all cardinals $\geq\lam$. In particular, it preserves $\AP(\lam)$. And finally, by Theorem \ref{th:CSRPreserve}, forcing with $\Pc(U,G^g)$ over the extension by $(\bb{M}_{\mf{l}}\times\bb{A}(\ka))\ast\dot{\ps}_{\lam^+}$ preserves $\CSR(\lam)$.

Thus it suffices to show that $\bb{J}$ forces the tree property at $\lam$. As before, we only need a proper initial segment of the club adding. To see this, observe that $\ps_{\lam^+}$ is $\lam^+$-c.c. after forcing with $\bb{M}_{\mf{l}}$. Next, the forcing $\bb{A}(\ka)\ast\Pc(\dot{U},\dot{G}^g)$ preserves the $\lam^+$-c.c. of $\ps_{\lam^+}$ since it is $\ka^+$-Knaster. Therefore, given any $\bb{J}$-name $\dot{T}$ for a $\lam$-tree, there is an $\al<\lam^+$ so that $\dot{T}$ is a name in
$$
\bb{J}_\al:=(\bb{M}_{\mf{l}}\times\bb{A}(\ka))\ast(\dot{\ps}_\al\times\Pc(\dot{U},\dot{G}^g)).
$$
This completes the first reduction in the $\AP(\lam)$ case.

\subsubsection{$\neg\AP(\lam)$ case: partial reduction.}

Let $\bb{M}^*_\mf{l}$ abbreviate $\bb{M}^*_\mf{l}(\ka,\lam)$. In this case, we will force with the poset
$$
\bb{J}^*:=(\bb{M}^*_{\mf{l}}\times\bb{A}(\ka))\ast(\dot{\ps}_{\lam^+}\times\Pc(\dot{U},\dot{G}^g))
$$
over the model $V[G^*_\ka][\bb{F}_{(\ka,\lam)}]$ from Lemma \ref{lemma:gg}. By similar preservation arguments as in the case of $\bb{J}$ (using Fact \ref{th:GK} to see that $\neg\AP(\lam)$ is preserved), we know that $\bb{J}^*$ forces that $\ka=\aleph_\om$, that $\lam=\aleph_{\om+2}$, and that $\CSR(\lam)\we\neg\AP(\lam)$ holds. Moreover, in order to see that $\bb{J}^*$ forces $\TP(\lam)$, it suffices to show that for all $\al<\lam^+$, the poset
$$
\bb{J}^*_\al:=(\bb{M}^*_{\mf{l}}\times\bb{A}(\ka))\ast(\dot{\ps}_\al\times\Pc(\dot{U},\dot{G}^g))
$$
forces $\TP(\lam)$.

\subsubsection{A further reduction for both cases:}

For the remainder of the paper, we let $\bb{M}$ denote either $\bb{M}_{\mf{l}}(\ka,\lam)$ or $\bb{M}^*_{\mf{l}}(\ka,\lam)$. We also let $\hat{\bb{J}}$ denote either $\bb{J}_\al$ or $\bb{J}^*_\al$. We will show that $\hat{\bb{J}}$ forces $\TP(\lam)$. We wish, therefore, to study the relevant quotients and prove a suitable branch lemma. However, we are confronted with the fact that $\hat{\bb{J}}$ has size $\lam^+$, on account of forcing with $\bb{A}(\ka)$. In order to provide the appropriate lifts of a weakly compact embedding with critical point $\lam$, we need a further reduction to a poset of size $\lam$. This is our next task.

Fix a $\hat{\bb{J}}$-name $\dot{T}$ for a $\lam$-tree. Let $M^*\prec H(\theta)$, with $\theta$ sufficiently large, so that $|M^*|=\lam$, $\,^{<\lam}M^*\seq M^*$, and so that $M^*\cap\lam^+\in\lam^+$. We also require that $M^*$ contains the parameters $\hat{\bb{J}}$, $\dot{T}$, and $\mf{l}$, as well as the sequence $\vec{M}^\al$ from Lemma \ref{lemma:parameter}. Let $\pi:M^*\lra M$ be the transitive collapse.

We next observe that $\pi^{-1}$ provides a complete embedding of $\pi(\hat{\bb{J}})$ into $\hat{\bb{J}}$. In short, $\pi^{-1}$ is the identity on $\bb{M}\ast\dot{\ps}_\al$. Moreover, $M[G\ast H_\al]$ is still closed under $<\lam$ sequences in $V[G\ast H_\al]$ since $\bb{M}$ is $\lam$-c.c. and since $\ps_\al$ is $\lam$-distributive. From this and the $\ka^+$-c.c. of $\bb{A}(\ka)\ast\Pc(\dot{U},\dot{G}^g)$, it follows that if $X\seq\pi(\hat{\bb{J}})$ is a maximal antichain in $V$, then $\pi^{-1}[X]$ is still maximal in $\hat{\bb{J}}$.

One consequence of the previous paragraph is that we may lift $\pi$ to an extension $\pi:M^*[\hat{G}]\lra M[\pi(\hat{G})]$, where $\hat{G}$ is $V$-generic over $\hat{\bb{J}}$. However, $\pi(T)=T$. Thus $T$ lives in the model $V[\pi(\hat{G})]$, where we note that this is indeed a $V$-generic extension, once more using the previous paragraph.

We will prove that $T$ has a cofinal branch in the model $V[\pi(\hat{G})]$. The heart of the proof is to show that if $k:M\lra N$ is a weakly compact embedding, then (roughly) below a master condition for the club adding, the quotient $k(\pi(\hat{\bb{J}}))/\pi(\hat{\bb{J}})$ exists and doesn't add a branch to $T$. However, we have one more preliminary matter to deal with before looking at this quotient.

\subsubsection{The Prikry lemma for $\pi(\Pc(\dot{U},\dot{G}^g))$.} It will be helpful later to know that $\dot{\R}:=\pi(\Pc(\dot{U},\dot{G}^g))$ satisfies the Prikry lemma not only in $V[G\times\bar{A}]$, but also in $V[G\ast H_\al\times\bar{A}]$, where $G\ast H_\al\times\bar{A}$ is $V$-generic over $(\bb{M}\ast\dot{\ps}_\al)\times\pi(\bb{A}(\ka))$. This does not seem to immediately follow from the fact that $\pi^{-1}(\dot{\R})$ (the un-collapsed Prikry forcing) is forced to satisfy the Prikry lemma. Our strategy will be to first show that $\dot{\R}$ is forced by $\bb{M}\times\pi(\bb{A}(\ka))$ to satisfy the strong Prikry lemma. We then show that this still holds after forcing with $\ps_\al$, and finally we will use the strong Prikry lemma for $\R$ to prove the Prikry lemma, which will be as usual for Prikry-type forcings, once we know that $\pi(\dot{U})$ names a normal measure on $\ka$.

\begin{lemma}
$\R$ satisfies the strong Prikry lemma in $V[G\times\bar{A}]$.
\end{lemma}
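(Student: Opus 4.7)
The proof follows the standard argument for the strong Prikry property in Prikry forcing with collapses, so the main task is to confirm that the familiar ingredients are available in the ambient model. Let $\tilde U := \pi(\dot U)[G \times \bar A]$ and $\tilde G := \pi(\dot G^g)[G \times \bar A]$, interpreted in $V[G \times \bar A]$.

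I will first verify the key properties of $\tilde U$ and $\tilde G$. Since $\dot U$ and $\dot G^g$ were chosen to be $\bb M \times \bb A(\ka)$-names for a normal measure on $\ka$ and a corresponding $(\dot U,2)$-guiding generic, the elementarity of $\pi : M^* \to M$ implies that in the collapsed model $M[G \times \bar A]$, $\tilde U$ is a normal measure on $\ka$ and $\tilde G$ is a $(\tilde U, 2)$-guiding generic. The ambient model $V[G \times \bar A]$ inherits from this, via absoluteness of the relevant defining clauses, that $\tilde U$ is a $\ka$-complete normal filter (decisive on sets in $M[G \times \bar A]$) and that $\tilde G$ is a guiding filter for $\tilde U$ at the level needed below.

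Next, fix a condition $(s_0,F_0) \in \R$ and a dense open set $D \seq \R$ in $V[G \times \bar A]$. The goal is to produce $(s_0,F^*) \leq^* (s_0,F_0)$ and $n \in \om$ such that every extension of $(s_0,F^*)$ by exactly $n$ new Prikry points lies in $D$. I build $F^*$ by the standard fusion construction: for each candidate stem $t$ extending $s_0$ by finitely many Prikry points (drawn from $\dom(F_0)$) together with compatible collapse data, I use the guiding generic $\tilde G$ to amalgamate, over the $\ka^+$-closed term forcing on the collapses, the possible direct extensions of $F_0$ that resolve whether $(t,\cdot)$ has a direct extension in $D$. Normality and $\ka$-completeness of $\tilde U$ then let me diagonalize over the fewer than $\ka$-many candidate stems at each length into a single upper part $F^*$ whose domain is a diagonal intersection and whose collapse values are amalgamated via the term forcing. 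A final application of the Prikry property for single-step extensions (which follows from the same normality/genericity ingredients) extracts the bound $n$.

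The principal obstacle is that $\tilde U$ is natively an ultrafilter in $M[G \times \bar A]$ but may fail to decide subsets of $\ka$ that appear only in the larger ambient model $V[G \times \bar A]$. I handle this by observing that the ``good-stem'' partitions of $\ka$ that need deciding in the fusion are defined from $\R$-conditions and from $D$, so after permitting direct extensions the relevant partitions can be represented by data lying in $M[G \times \bar A]$ (using that upper parts may, without loss, be replaced by direct extensions coming from $M[G \times \bar A]$ via the $\ka^+$-closure of the term forcing there). The key diagonalization steps are therefore effectively carried out inside $M[G \times \bar A]$, where $\tilde U$ is a genuine normal ultrafilter, and the resulting $F^*$ and $n$ are then transferred back to $V[G \times \bar A]$.
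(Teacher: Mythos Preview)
Your approach differs substantially from the paper's, and as written it has a gap.

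The paper does not re-run the fusion argument at all. Instead it argues in two short steps. First, by the elementarity of $\pi:M^*\to M$ (and lifting $\pi$ through a generic $A\supseteq\bar A$), the statement ``$\R$ satisfies the strong Prikry lemma'' transfers from $M^*[G\times A]$ to $M[G\times\bar A]$. Second, to pass from $M[G\times\bar A]$ to $V[G\times\bar A]$, the paper takes an arbitrary dense open $D\subseteq\R$ in $V[G\times\bar A]$, picks a maximal antichain $X\subseteq D$, observes $|X|\le\ka$ since $\R$ is $\ka^+$-c.c., and then uses that $M[G\times\bar A]$ is ${<}\lam$-closed in $V[G\times\bar A]$ to conclude $X\in M[G\times\bar A]$. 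The downward closure $\bar D$ of $X$ is then a dense open subset of $D$ lying in $M[G\times\bar A]$, and one applies the strong Prikry lemma there. No analysis of $\tilde U$ or $\tilde G$ in the larger model is needed.

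Your argument, by contrast, tries to run the standard strong Prikry construction directly in $V[G\times\bar A]$ and then confronts the question of whether $\tilde U$ decides the relevant subsets of $\ka$. Your patch---that the good-stem partitions ``can be represented by data lying in $M[G\times\bar A]$'' by replacing upper parts---is not justified: those partitions are defined from the dense open set $D$, which a priori lives only in $V[G\times\bar A]$, and nothing you wrote explains how replacing upper parts moves $D$ (or the induced partitions) into $M[G\times\bar A]$. In fact the obstacle you identify is not real: since $M[G\times\bar A]$ is ${<}\lam$-closed in $V[G\times\bar A]$ and $\ka<\lam$, every subset of $\ka$ in $V[G\times\bar A]$ already lies in $M[G\times\bar A]$, so $\tilde U$ \emph{is} a normal ultrafilter there; but you neither state nor use this. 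Either make that closure observation explicit (and similarly check that $\tilde G$ remains a guiding generic in $V[G\times\bar A]$, which also uses the closure), or adopt the paper's cleaner antichain reduction, which avoids the issue entirely.
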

\begin{proof}
Recall that $\pi:M^*\lra M$ is the transitive collapse of $M^*$, and that $M^*\prec H(\theta)$. Extend $\bar{A}$ to a $V[G]$-generic filter $A$ over $\bb{A}(\ka)$. Then we can lift $\pi$ to $\pi:M^*[G\times A]\lra M[G\times\bar{A}]$. Since $M^*[G\times A]\prec H(\theta)[G\times A]$, we know that $M^*[G\times A]\models ``\pi^{-1}(\R)$ satisfies the strong Prikry lemma." Thus $M[G\times\bar{A}]\models``\R$ satisfies the strong Prikry lemma." This is, of course, insufficient as it stands, since we need the result in $V[G\times\bar{A}]$.

To see that this is the case, fix $D\seq\R$ which is dense, open with $D\in V[G\times\bar{A}]$, and fix a condition $r\in\R$. Let $X\seq D$ be a maximal antichain in $D$, and hence in all of $\R$ by the density of $D$. $\R$ is $\ka$-centered in $V[G\times\bar{A}]$, and hence $\ka^+$-c.c. Thus $|X|\leq\ka$.

Recalling that $^{<\lam}M\seq M$ and that $\bb{M}\times\pi(\bb{A}(\ka))$ is $\lam$-c.c., we have that $V[G\times\bar{A}]\models\,^{<\lam}M[G\times\bar{A}]\seq M[G\times\bar{A}]$. Since $X\seq\R\seq M[G\times\bar{A}]$ has size $\leq\ka$, we have that $X\in M[G\times\bar{A}]$. Hence
$$
\bar{D}:=\lb s\in\R:(\exists x\in X)\;[s\leq_{\R}x]\rb
$$
is dense, open in $\R$ and a member of $M[G\times\bar{A}]$. Additionally, $\bar{D}\seq D$ since $D$ is open and $X\seq D$. Now we apply the fact that $\R$ satisfies the strong Prikry lemma in $M[G\times\bar{A}]$ to find a direct extension, say $r^*$, of $r$ and an $n\in\om$ so that every $n$-step extension of $r^*$ is in $\bar{D}$. Since $\bar{D}\seq D$, this completes the proof.
\end{proof}

\begin{lemma}\label{lemma:PrikryInExtension}
$\R$ satisfies the strong Prikry lemma in $V[G\ast H_\al\times\bar{A}]$.
\end{lemma}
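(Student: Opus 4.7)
The plan is to reduce this to the previous lemma by showing that any dense open $D\subseteq\R$ in $V[G\ast H_\al\times\bar{A}]$ has a dense open refinement $\bar{D}\subseteq D$ that already lives in $V[G\times\bar{A}]$. The key observation is that forcing with $\ps_\al$ over $V[G\times\bar{A}]$ adds no small antichains of $\R$.

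First, I would establish that $\ps_\al$ is $\lam$-distributive over $V[G\times\bar{A}]$. By Fact \ref{fact:GLS}, $\ps_\al$ is $\lam$-distributive in $V[G]$. Since $\pi(\bb{A}(\ka))$ is a variant of Cohen forcing at $\ka$, it is $\lam$-c.c.\ in $V[G]$, so Lemma \ref{lemma:distributiveEaston} gives that $\ps_\al$ remains $\lam$-distributive in $V[G\times\bar{A}]$. In particular, $\ps_\al$ adds no new subsets of $\R$ of size $\leq\ka$ over $V[G\times\bar{A}]$.

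Now fix $D\subseteq\R$ dense open in $V[G\ast H_\al\times\bar{A}]$ together with $r\in\R$. Let $X\subseteq D$ be a maximal antichain of $D$, which is therefore maximal in $\R$. Since $\R$ is $\ka$-centered (and hence $\ka^+$-c.c.) in $V[G\ast H_\al\times\bar{A}]$ by the same argument as in the previous lemma, $|X|\leq\ka$. By the distributivity established in the preceding paragraph, $X\in V[G\times\bar{A}]$, and consequently the set
\[
\bar{D}:=\lb s\in\R:(\exists x\in X)\;[s\leq_{\R}x]\rb
\]
is a dense open subset of $\R$ contained in $D$, and $\bar{D}\in V[G\times\bar{A}]$.

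Finally, I would invoke the previous lemma, applied to $\bar{D}$ and $r$ in $V[G\times\bar{A}]$, to obtain a direct extension $r^*\leq^*r$ and an $n\in\om$ such that every $n$-step extension of $r^*$ lies in $\bar{D}\subseteq D$. Since both $r^*$ and the witness $n$ exist in $V[G\times\bar{A}]\subseteq V[G\ast H_\al\times\bar{A}]$, this verifies the strong Prikry lemma for $\R$ in the larger model. The only place one needs to be careful is the reduction of $D$ to $\bar{D}$, which relies crucially on the $\lam$-distributivity of $\ps_\al$ combined with the $\ka^+$-c.c.\ of $\R$; I do not expect any obstacle beyond that.
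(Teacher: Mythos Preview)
Your proposal is correct and follows essentially the same argument as the paper: reduce to the previous lemma by using the $\ka^+$-c.c.\ of $\R$ to find a small maximal antichain $X\subseteq D$, then invoke the $\lam$-distributivity of $\ps_\al$ over $V[G\times\bar{A}]$ (via Lemma~\ref{lemma:distributiveEaston}) to place $X$ and hence $\bar{D}$ in $V[G\times\bar{A}]$. The only cosmetic difference is that you front-load the distributivity discussion, whereas the paper does it inline.
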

\begin{proof}
Let $D\seq\R$ be dense, open, with $D\in V[G\ast H_\al\times\bar{A}]$, and let $r\in\R$. As before, let $X\seq D$ be a maximal antichain in $D$, and hence in $\R$. $|X|=\ka$, since $\R$ is $\ka^+$-c.c. in $V[G\ast H_\al\times\bar{A}]$. $\ps_\al$ is $\lam$-distributive in $V[G]$, and hence is still $\lam$-distributive in $V[G\times\bar{A}]$ by Lemma \ref{lemma:distributiveEaston}. Thus $X\in V[G\times\bar{A}]$. Let $\bar{D}:=\lb s\in\R:(\exists x\in X)\;[s\leq_{\R}x]\rb$, so that $\bar{D}\seq\R$ is dense, open, and in $V[G\times\bar{A}]$. But $\bar{D}\seq D$, so an $r^*\leq^*r$ and an $n\in\om$ witnessing the strong Prikry lemma for $r$ and $\bar{D}$ in $V[G\times\bar{A}]$ also witness the strong Prikry lemma for $r$ and $D$ in $V[G\ast H_\al\times\bar{A}]$.
\end{proof}

The following lemma then gives us our desired result. We have not included the whole proof, since it is standard for Prikry-type forcings to conclude the Prikry lemma from the strong Prikry lemma.

\begin{lemma}\label{lemma:pl4r}
$\R$ satisfies the Prikry lemma in $V[G\ast H_\al\times\bar{A}]$.
\end{lemma}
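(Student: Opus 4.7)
The plan is to derive the Prikry lemma from the strong Prikry lemma (Lemma \ref{lemma:PrikryInExtension}) via the usual homogenization argument for Prikry-type forcings. The only real preliminary is to verify that, in $V[G \ast H_\al \times \bar A]$, $\pi(\dot U)$ still names a normal $\ka$-complete ultrafilter on $\ka$. To do this I would lift $\pi$ through $G \times A$ (extending $\bar A$ to a $V[G]$-generic $A$ over $\bb{A}(\ka)$) as in the proof of Lemma \ref{lemma:PrikryInExtension}, apply $\pi$-elementarity to the statement ``$\dot U$ names a normal measure on $\ka$'' as it holds in $M^*[G \times A]$, and then invoke $\lam$-distributivity of $\ps_\al$ over $V[G \times \bar A]$ (via Lemma \ref{lemma:distributiveEaston}), which shows that no new subsets of $\ka$ are added and hence that normality and $\ka$-completeness of $\pi(U)$ persist in $V[G \ast H_\al \times \bar A]$.

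With the measure in hand, fix a condition $(s, F) \in \R$ and a sentence $\vp$ of the $\R$-forcing language. The set $D$ of conditions below $(s, F)$ that decide $\vp$ is dense open, so Lemma \ref{lemma:PrikryInExtension} supplies $(s, F^*) \leq^* (s, F)$ and $n \in \om$ so that every $n$-step extension of $(s, F^*)$ decides $\vp$. I would then color each $\vec\al = \la \al_1 < \cdots < \al_n\ra \in [\dom(F^*)]^n$ by $0$ if some $n$-step extension with Prikry sequence $\vec\al$ forces $\vp$, and by $1$ otherwise (in which case every such extension forces $\neg \vp$). Iterating the normality of $\pi(U)$ $n$-many times then produces $A \in \pi(U)$ with $A \seq \dom(F^*)$ on which this coloring is constant.

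The final step is to build a direct extension $(s, F^{**}) \leq^* (s, F^*)$ with $\dom(F^{**}) = A$ and $[F^{**}]_{\pi(U)} \in \pi(G^g)$ which decides $\vp$. If the constant color is $1$, there is essentially nothing to do: every $n$-step extension of $(s, F^{**})$ uses Prikry points from $A$ and therefore forces $\neg\vp$, so $(s, F^{**})$ forces $\neg\vp$. If the constant color is $0$, one additionally has to diagonalize the witnessing stem-collapses and upper-part collapses across the various $\vec\al \in [A]^n$, which is handled fiberwise using the $\al^{+k}$-closure of $\Col(\al^{+k}, <\ka)$ for each $\al \in A$ together with the closure of the guiding generic $\pi(G^g)$ under the relevant meets. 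The main obstacle, then, is precisely this collapse-diagonalization in the color-$0$ case; but it is entirely routine for Prikry-with-collapses forcings once the strong Prikry lemma and the normality of $\pi(U)$ are available.
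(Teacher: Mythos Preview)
Your approach matches the paper's: both verify that $\pi(U)$ is a normal measure in $V[G \ast H_\al \times \bar A]$, invoke the strong Prikry lemma (Lemma~\ref{lemma:PrikryInExtension}), and then defer to the standard uniformization. The paper phrases the last step as taking $n$ minimal with a direct extension whose $n$-step extensions all lie in $D$, and then showing $n=0$ ``by means of a typical uniformization argument.'' One small omission in your normality argument: $\pi$-elementarity only yields that $\pi(U)$ is normal in $M[G\times\bar A]$; you still need the $<\lam$-closure of $M[G\times\bar A]$ inside $V[G\times\bar A]$ (which follows from the $\lam$-c.c.\ of $\bb{M}\times\pi(\bb{A}(\ka))$ together with the closure of $M$) to transfer normality to $V[G\times\bar A]$ before appealing to the distributivity of $\ps_\al$. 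The paper makes this step explicit.

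Your sketch of the uniformization itself, however, does not work as written. In the color-$0$ case you fix, for each $\vec\al$, \emph{some} $n$-step extension forcing $\vp$, and propose to amalgamate the witness collapses into a single $F^{**}$ fiberwise. But for a fixed $\al$ occurring at a non-final position $i$, there are $\ka$-many tuples $\vec\al$ placing $\al$ there (one for each choice of the larger coordinates), the associated witness collapses need not be pairwise compatible, and the $\al^{+k}$-closure of $\Col(\al^{+k},<\ka)$ cannot absorb $\ka$-many conditions. The standard argument avoids this asymmetry: first shrink $\dom(F^*)$ via normality so that $\al<\be$ in the domain implies $F^*(\al)\in\Col(\al^{+k},<\be)$; then each $\vec\al$ admits a \emph{weakest} $n$-step extension (using the unmodified $F^*(\al_i)$ as the new collapses and $F^*$ restricted above $\al_n$ as the upper part), and one colors $\vec\al$ by how this weakest extension decides $\vp$. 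After homogenizing, every $n$-step extension of the shrunk condition lies below the relevant weakest extension and hence decides $\vp$ the same way, so the shrunk condition itself decides $\vp$. This is the ``typical uniformization'' the paper has in mind, and it sidesteps the diagonalization you flagged as the main obstacle.
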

\begin{proof}
Recall that $\dot{U}$ is an $(\bb{M}\times\bb{A}(\ka))$-name for a normal measure on $\ka$. Thus $M^*$ satisfies this statement, being elementary in $H(\theta)$. Therefore, as $\pi:M^*\lra M$ is the transitive collapse, $M[G\times\bar{A}]\models ``\pi(\dot{U})[G\times\bar{A}]$ is a normal measure on $\ka$." However, $M[G\times\bar{A}]$ is $<\lam$-closed in $V[G\times\bar{A}]$, and hence $\bar{U}:=\pi(\dot{U})[G\times\bar{A}]$ is a normal measure on $\ka$ in $V[G\times\bar{A}]$. $\ps_\al$ is $\lam$-distributive in $V[G\times\bar{A}]$, and so $\bar{U}$ is still a normal measure on $\ka$ in $V[G\ast H_\al\times\bar{A}]$.

We next let $\vp$ be a statement in the forcing language of $\R$, and let 
$$
D:=\lb r\in\R:r\text{ decides }\vp\rb.
$$
So $D$ is dense, open. Fix $r\in\R$, and let $n$ be the least element of $\om$ for which there is some $r^*\leq^* r$ so that every $n$-step extension of $r^*$ is in $D$. Let such an $r^*$ be fixed. To see that $n=0$, one proceeds by means of a typical uniformization argument.
\end{proof}

\subsubsection{Quotient analysis, branch lemmas, and the final argument:}

In this final part of the paper, we will analyze the appropriate quotient and show that it doesn't add a branch to $T$. Recall that $\pi(\hat{\bb{J}})$ contains the full forcing $\bb{M}\ast\dot{\ps}_\al$, but only truncated portions of the forcing to add the Cohen subsets of $\ka$ and the $\Pc$ poset. That is to say, $\pi(\hat{\bb{J}})$ is of the form $(\bb{M}\times\pi(\bb{A}(\ka)))\ast(\dot{\ps}_\al\times\pi(\Pc(\dot{U},\dot{G}^g)))$. Also, recall that there is a dense embedding from $k(\bb{M})$ to $\bb{M}\ast\dot{\ps}_\al\ast\dot{\N}$ (see Lemma \ref{lemma:tailprojections}), where $\N$ is a projection of a $\ka^+$-closed (in $V[G\ast H_\al]$) term forcing $\bb{T}$ and the Cohen forcing $\Add(\ka,[\lam,k(\lam)))$.

\begin{notation}To simplify notation, we let $\bar{\bb{J}}$ denote $\pi(\hat{\bb{J}})$, $\dot{\R}$ denote $\pi(\Pc(\dot{U},\dot{G}^g))$, and $\bb{A}$ denote $\pi(\bb{A}(\ka))$.
\end{notation}

The next lemma combines into one statement the analogues of Lemmas \ref{lemma:ce,AP} and \ref{lemma:ce,de} (or Lemma \ref{lemma:ce,nAP} in the $\neg\AP$ case) and Lemma \ref{lemma:Sbranch}. The proofs are routine modifications of the proofs of these lemmas.

\begin{lemma}\label{lemma:embedcollapsecase}\hfill
\begin{enumerate}
    \item There is a complete embedding $\hat{\iota}$ from $(\bb{M}\times\bb{A})\ast(\dot{\R}\times\dot{\ps}_\al)$ to $[(\bb{M}\ast\dot{\ps}_\al\ast\dot{\N})\times k(\bb{A})]\ast k(\dot{\R})$ given by
    $$
    (m,a,\dot{r},\dot{q})\mapsto ((m,\dot{q},1_\N),a,\dot{r}).
    $$
    \item Let $(G\times A)\ast (R\times H_\al)$ be generic for the left-hand poset, and let $\Q$ denote the quotient of the right-hand poset by this generic. Then there is a dense embedding from $\Q$ to $\bb{S}$, where $\bb{S}$ consists of all triples $(n,a,\dot{r})$ in $(\N\times k(\bb{A}))\ast k(\dot{\R})$ so that for all $(\bar{a},\dot{\bar{r}})\in A\ast R$, $n$ does not force over $N[(G\times A)\ast (R\times H_\al)]$ that $(a,\dot{r})$ is incompatible with $(k(\bar{a}),\bar{r})$.
    \item $\bb{S}$ forces over $N[(G\times A)\ast (R\times H_\al)]$ that there is a cofinal branch through $T$.
\end{enumerate}
\end{lemma}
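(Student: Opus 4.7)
The three statements are close analogues of Lemmas \ref{lemma:ce,AP}, \ref{lemma:ce,de}, and \ref{lemma:Sbranch}; the plan is to run the same arguments while keeping track of the extra Cohen factor and of the transitive collapse $\pi$. Throughout, I will use the dense embedding from $k(\bb{M})$ to $\bb{M}\ast\dot{\ps}_\al\ast\dot{\bb{N}}$ together with the master condition $\dot q^\al$ from Lemma \ref{lemma:ce}; I will also use the strong Prikry and Prikry lemmas (Lemmas \ref{lemma:PrikryInExtension} and \ref{lemma:pl4r}) and the fact that $\pi(\dot U)$ is forced to be a normal measure on $\ka$, so that $k$ carries conditions of $\dot{\R}$ to conditions of $k(\dot{\R})$.

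\smallskip
\emph{Part (1).} Conditions (1) and (2) of Definition \ref{def:ce} are immediate. For condition (3), given two source conditions whose $\hat\iota$-images share a common extension, force through $(\bb{M}\ast\dot{\ps}_\al\ast\dot{\N})\times k(\bb{A})$ below a witness, lift $k$ step by step (first to $M[G\ast H_\al]$ using $\dot q^\al$, then to include the generic for $k(\bb{A})$), and pull the $k(\dot{\R})$-compatibility back through elementarity; here one uses that $k$ acts trivially on conditions of the source $\dot{\R}$ whose data is bounded below $\lam$. For condition (4), let $X$ be a maximal antichain of the source poset. Given a target condition, force to $(G\ast H_\al)\times(A\ast R)$ containing its $\bb{M}\ast\dot\ps_\al$ and product-part, and consider
\[
\bar X=\{(\bar a,\bar r)\in\bb{A}\ast\dot{\R}[G]:(\exists (m,\dot q)\in G\ast H_\al)\; (m,\dot q,\bar a,\dot{\bar r})\in X\}.
\]
Then $\bar X$ is a maximal antichain in $\bb{A}\ast\dot{\R}[G]$ of size $\leq \ka$ (since the latter poset is $\ka^+$-Knaster), and by $\lam$-distributivity of $\ps_\al$ together with $^{<\lam}M[G]\seq M[G]$ in $V[G]$, $\bar X$ is a member of $M[G]$. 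Since $\crit(k)=\lam>\ka$, $k(\bar X)=k[\bar X]$ is still maximal in its image poset in $N$, and the usual amalgamation then produces an element of $\hat\iota[X]$ compatible with the starting condition.

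\smallskip
\emph{Part (2).} Define $\vp\colon\Q\to\bb{S}$ by $((m,\dot q,\dot n),a,\dot r)\mapsto (\dot n[G\ast H_\al],a,\dot r)$ (after replacing names by $\dot{\bb{N}}$-names in $N[G\ast H_\al]$ as needed). One shows, in order: (i) if $((m,\dot q,\dot n),a,\dot r)\in\Q$ then $(n,a,\dot r)\in\bb{S}$, by using Fact \ref{fact:quotientStandard}(2) to extend into the quotient together with any $(\bar a,\dot{\bar r})\in A\ast R$ and then forcing through $\dot{\bb{N}}$ and $k(\dot{\R})$ to produce a genuine compatibility witness, contradicting any forced incompatibility; (ii) $\vp$ is order preserving; (iii) whenever $(n,a,\dot r)\in\bb{S}$ and $(m,\dot q,\bar a,\dot{\bar r})\in W$ forces this membership, $((m,\dot q,\dot n),a,\dot r)$ lies in $\Q$, by a symmetric compatibility argument; (iv) using (iii), $\vp$ is surjective, so condition (5) holds; and (v) the quotient-compatibility argument plus (iii) gives condition (3). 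These are the direct analogues of the five claims in the proof of Lemma \ref{lemma:ce,de}, now with the Cohen coordinate $a$ passively carried along.

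\smallskip
\emph{Part (3).} This is the master-condition argument. Let $\bar G$ be $\bb{S}$-generic over $N[(G\times A)\ast(R\times H_\al)]$; pulling it back through the dense embedding of (2) yields a $\Q$-generic, and hence extends $W$ to a filter $(G\ast H_\al\ast G^*_\N)\times(A^*\ast R^*)$ generic over $N$ for $(\bb{M}\ast\dot\ps_\al\ast\dot{\bb{N}})\times k(\bb{A})\ast k(\dot{\R})$, with $A\seq A^*$ and $R\seq R^*$. Absorb $G\ast H_\al\ast G^*_\N$ into a $k(\bb{M})$-generic $G_{k(\bb{M})}$ so as to lift $k\colon M[G]\to N[G_{k(\bb{M})}]$. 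Choose an $N[G_{k(\bb{M})}\ast(A^*\ast R^*)]$-generic $H^*_\al$ for $k(\ps_\al)$ containing $q^\al$, which by Lemma \ref{lemma:ce} is below $k[H_\al]$, lifting further to $k\colon M[G\ast H_\al]\to N[G_{k(\bb{M})}\ast H^*_\al]$; then since $A\seq A^*$ and $R\seq R^*$, one additional lift produces $k\colon M[(G\times A)\ast(R\times H_\al)]\to N[\cdots]$. Now $k(T)\res\lam=T$, so $T$ has a cofinal branch $\tau$ in the lifted target model. Finally, $k(\ps_\al)$ is $k(\lam)$-distributive over $N[G_{k(\bb{M})}]$, and by Lemma \ref{lemma:distributiveEaston} it remains so after the $k(\lam)$-c.c.\ forcing $k(\bb{A})\ast k(\dot{\R})$; hence $\tau$ lies already in $N[G_{k(\bb{M})}\ast(A^*\ast R^*)]$, i.e., in the $\bb{S}$-extension. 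The main technical obstacle is the chain-condition and closure bookkeeping needed to locate $\bar X$ in $M[G]$ in Part (1) and to see that the relevant intermediate models support the lifts in Part (3); beyond that the argument is a careful transcription of the earlier, simpler cases.
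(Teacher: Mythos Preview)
Your proposal is correct and follows exactly the route the paper intends: the paper itself gives no detailed proof, saying only that the arguments are ``routine modifications'' of Lemmas \ref{lemma:ce,AP}, \ref{lemma:ce,de}, and \ref{lemma:Sbranch}, and that is precisely what you sketch.

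One small correction to Part (1): for condition (3) of Definition \ref{def:ce}, you should not invoke the master condition $\dot q^\al$. The target poset $[(\bb{M}\ast\dot\ps_\al\ast\dot{\bb{N}})\times k(\bb{A})]\ast k(\dot{\R})$ contains no copy of $k(\dot\ps_\al)$, so no $k(\ps_\al)$-generic is available at this stage, and hence you cannot lift $k$ to have domain $M[G\ast H_\al]$. The needed lift is only $k\colon M[G]\to N[G^*]$ (or, including the Cohen factor, $k\colon M[G\times\bar A]\to N[G^*\times A^*]$ with $\bar A:=k^{-1}[A^*\cap\ran(k)]$), exactly as in the proof of Lemma \ref{lemma:ce,AP}; this suffices to pull the $k(\dot{\R})$-compatibility back by elementarity. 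The master condition $\dot q^\al$ is used only in Part (3), where you correctly invoke it. Also in Part (3), ``$A\seq A^*$'' should read ``$k[A]\seq A^*$'' (cf.\ the appearance of $k(\bar a)$ in the definition of $\bb{S}$), though since individual Cohen conditions have size $<\ka$ this is largely notational.
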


Thus it suffices to show that $\bb{S}$ does not add a cofinal branch through $T$. 

\begin{notation}
Let $\dot{\tau}$ denote, for the remainder of the paper, an $\bb{S}$-name for a cofinal branch through $\dot{T}$ so that $\dot{\tau}\in N$.
\end{notation}

We proceed to the ``quotient analysis" (or more precisely, an analysis of $\bb{S}$), which we carry out in detail; recall the notation $\pstem(s)$ from Definition \ref{def:PrikryWithCollapses}.

\begin{lemma}\label{lemma:qa} \emph{(Quotient Analysis)} In $V[G\ast H_\al]$, let $(\bar{a},\dot{\bar{r}})\in\bb{A}\ast\dot{\R}$ be a condition so that $\bar{a}$ decides $\stem(\dot{\bar{r}})$. Let $(m,a,\dot{r})$ be a condition in $(\N\times  k(\bb{A}))\ast k(\dot{\R})$ so that $(m,a)$ decides the value of $\stem(\dot{r})$. Then $(\bar{a},\dot{\bar{r}})$ forces that $(m,a,\dot{r})$ is not in $\dot{\bb{S}}$ iff one of the following conditions is true:
\begin{enumerate}
\item $k(\bar{a})$ is incompatible with $a$;
\item either $\pstem(\dot{r})$ is incompatible with $\pstem(\dot{\bar{r}})$, or they are compatible, but one of the following is the case:
\begin{enumerate}
\item[(2a)] for some $i<\min(\ell(\dot{r}),\ell(\dot{\bar{r}}))$, $f^{\dot{\bar{r}}}_i$ is incompatible with $f^{\dot{r}}_i$, or
\item[(2b)] the topmost collapse of one of the stems is not below the next Prikry point of the other stem;
\end{enumerate}
\item $\ell(\dot{r})>\ell(\dot{\bar{r}})$, but for all $\bar{a}'\leq_{\bb{A}}\bar{a}$ so that $k(\bar{a}')$ is compatible with $a$ in $k(\bb{A})$, there is some $\bar{a}''\leq_{\bb{A}}\bar{a}'$ with $k(\bar{a}'')$ also compatible with $a$ which forces, in $\bb{A}$, the following over $V[G\ast H_\al]$: for some $i\in [\ell(\dot{\bar{r}}),\ell(\dot{r}))$ either $\al^{\dot{r}}_i\notin\dom(\dot{F}^{\dot{\bar{r}}})$, or $\al^{\dot{r}}_i\in\dom(\dot{F}^{\dot{\bar{r}}})$, but one of the following is the case:
\begin{enumerate}
\item[(3a)] $f^{\dot{r}}_i$ is incompatible with $\dot{F}^{\dot{\bar{r}}}(\al^{\dot{r}}_i)$ as functions, or
\item[(3b)] $\dot{F}^{\dot{\bar{r}}}(\al^{\dot{r}}_i)$ is not in the appropriate collapse poset;
\end{enumerate}
\item $\ell(\dot{\bar{r}})>\ell(\dot{r})$, and $a$ is compatible with $k(\bar{a})$, but the condition $(m,a\cup k(\bar{a}))$ forces the following over $V[G\ast H_\al]$ in $\N\times k(\bb{A})$: for some $i\in [\ell(\dot{r}),\ell(\dot{\bar{r}}))$, either $\al^{\dot{\bar{r}}}_i\notin \dom(\dot{F}^{\dot{r}})$, or $\al^{\dot{\bar{r}}}_i\in\dom(\dot{F}^{\dot{r}})$, but one of the following is the case:
\begin{enumerate}
\item[(4a)] $f^{\dot{\bar{r}}}_i$ is incompatible with $\dot{F}^{\dot{r}}(\al^{\dot{\bar{r}}}_i)$ as functions, or
\item[(4b)] $\dot{F}^{\dot{r}}(\al^{\dot{\bar{r}}}_i)$ is not in the appropriate collapse poset.
\end{enumerate}
\end{enumerate}
\end{lemma}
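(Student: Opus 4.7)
The plan is to reduce the non-membership statement to a direct analysis of forced incompatibility between $(a,\dot{r})$ and $(k(\bar{a}),\dot{\bar{r}})$ in $(\N \times k(\bb{A})) \ast k(\dot{\R})$, and then to enumerate the ways in which two Prikry-with-collapses conditions can fail to admit a common extension.

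First I would show that $(\bar{a},\dot{\bar{r}}) \Vdash_{\bb{A} \ast \dot{\R}} (m,a,\dot{r}) \notin \dot{\bb{S}}$ is equivalent to the self-witnessing statement
\[
(\bar{a},\dot{\bar{r}}) \Vdash m \Vdash_{(\N \times k(\bb{A})) \ast k(\dot{\R})} (a,\dot{r}) \perp (k(\bar{a}),\dot{\bar{r}}).
\]
The backward direction is immediate from the definition of $\bb{S}$ since $(\bar{a},\dot{\bar{r}})$ will itself lie in the generic $\bar{A}\ast R$ and so serve as the required witness. For the forward direction, in any generic containing $(\bar{a},\dot{\bar{r}})$, a hypothetical witness $(\bar{a}',\dot{\bar{r}}')\in \bar{A}\ast R$ can be taken below $(\bar{a},\dot{\bar{r}})$ by downward closure; since $\bar{a}$ and $(m,a)$ already decide the two stems by hypothesis, the incompatibility with $(k(\bar{a}'),\bar{r}')$ reflects back to one with $(k(\bar{a}),\dot{\bar{r}})$ after replacing $\bar{r}'$ by a direct extension of $\dot{\bar{r}}$ inheriting the same stem.

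Next I would analyze the forced incompatibility by cases. The pair is incompatible in $(\N \times k(\bb{A})) \ast k(\dot{\R})$ iff either $a$ and $k(\bar{a})$ are incompatible in $k(\bb{A})$ (case (1)), or a common extension $a^{*}\leq a,k(\bar{a})$ forces $\dot{r}\perp\dot{\bar{r}}$ in $k(\dot{\R})$. Unpacking the structure of a Prikry-with-collapses condition, two conditions with decided stems $\langle f_0,\al_1,f_1,\dots\rangle$ admit a common end-extension precisely when the Prikry-point prefixes agree and the collapses at shared coordinates are compatible (failure giving case (2)), and, if one stem properly end-extends the other (say $\ell(\dot{r})>\ell(\dot{\bar{r}})$), the extra Prikry points $\al^{\dot{r}}_{i}$ must lie in $\dom(\dot{F}^{\dot{\bar{r}}})$ with $f^{\dot{r}}_{i}$ strengthening $\dot{F}^{\dot{\bar{r}}}(\al^{\dot{r}}_{i})$. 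The various ways this second requirement can fail are exactly cases (3a)--(3b), and the symmetric analysis for $\ell(\dot{\bar{r}})>\ell(\dot{r})$ yields case (4).

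The main obstacle will be tracking the names and the quantifier structure in cases (3) and (4). In case (3), $\dot{F}^{\dot{\bar{r}}}$ is an $\bb{A}$-name depending on $\bar{a}$, and the nested ``for all $\bar{a}'\leq \bar{a}$, there is $\bar{a}''\leq \bar{a}'$'' quantifier is the standard density reformulation of ``$\bar{a}$ forces failure of compatibility in $\bb{A}$'', restricted to those $\bar{a}'$ whose $k$-image remains compatible with $a$ (otherwise we are already in case (1)). In case (4), the roles are reversed, so $\dot{F}^{\dot{r}}$ is named from $\N \times k(\bb{A})$ and the forcing relation must be evaluated with respect to the joint condition $(m, a\cup k(\bar{a}))$. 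Once these name-evaluations are in place, both directions of the equivalence follow: in each case (1)--(4) one directly verifies that the described obstruction forces the incompatibility, while conversely, if none of (1)--(4) holds, a straightforward construction produces an extension of $(\bar{a},\dot{\bar{r}})$ in $\bb{A}\ast\dot{\R}$ together with a common lower bound of $(a,\dot{r})$ and $(k(\bar{a}),\dot{\bar{r}})$ compatible with $m$, contradicting the forced incompatibility.
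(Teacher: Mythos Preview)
Your overall plan---reduce to an analysis of forced incompatibility and then enumerate the obstructions for Prikry-with-collapses conditions---is the right intuition, and your case breakdown in the second half matches the paper's. The difficulty lies in your first step, the reduction to the ``self-witnessing'' statement
\[
(\bar{a},\dot{\bar{r}}) \Vdash \bigl[m \Vdash (a,\dot{r}) \perp (k(\bar{a}),\dot{\bar{r}})\bigr].
\]
Your forward-direction argument does not go through as written. If in a generic $\bar A\ast\bar R$ some witness $(\bar a',\bar r')\le(\bar a,\bar r)$ satisfies that $m$ forces $(a,\dot r)\perp(k(\bar a'),\bar r')$, this incompatibility may be caused by extra Prikry points of $\bar r'$ beyond those of $\bar r$ (for example, a point that $m$ forces out of $\dom(\dot F^{\dot r})$). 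Incompatibility with the \emph{stronger} condition $(k(\bar a'),\bar r')$ does not imply incompatibility with the \emph{weaker} $(k(\bar a),\bar r)$; your ``replace $\bar r'$ by a direct extension of $\dot{\bar r}$ with the same stem'' move discards exactly the data that caused the incompatibility. In fact the asymmetric quantifier structure of (3) versus (4) in the lemma---a density condition below $\bar a$ in $\bb A$ for (3), but a direct forcing statement by $(m,a\cup k(\bar a))$ for (4)---is a sign that no clean self-witnessing reduction of this form is available: if your reduction held, (3) and (4) would be symmetric.

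The paper avoids the reduction entirely and argues both directions by building generics. For ``not forced out $\Rightarrow$ (1)--(4) fail,'' it fixes a generic $\bar A\ast\bar R$ with $(m,a,\dot r)\in\bb S$, extracts an explicit $m^*\le m$ and $(a^*,\dot r^*)$ witnessing compatibility, forces below $(m^*,a^*)$ to a full $G^*\times A^*$ for $k(\bb M)\times k(\bb A)$, and reads off the failure of each clause there (in particular, the $\bar a'$ required to refute (3) is found inside $\bar A$). For the converse it splits into three cases on the comparison of $\ell(\dot r)$ and $\ell(\dot{\bar r})$; in each, it forces below $m$ in $\bb N$, then below a suitable $a^*$ in $k(\bb A)$, amalgamates the stems and upper parts into a single $r^*\in k(\R)$, forces below $r^*$, and checks that the resulting $\bar A\ast\bar R$ places $(m,a,\dot r)$ in $\bb S$. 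Your closing sentence (``a straightforward construction produces an extension\dots'') hides this work; the constructions are routine but not one-liners, and the case $\ell(\dot r)>\ell(\dot{\bar r})$ in particular uses the failure of (3) to pick the right $\bar a''$ before building the generic.
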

\begin{proof}
First we will show that if $(\bar{a},\dot{\bar{r}})$ does not force $(m,a,\dot{r})$ out of $\dot{\bb{S}}$, then all of (1) through (4) are false. Thus let $\bar{A}\ast\bar{R}$ be $V[G\ast H_\al]$-generic over $\bb{A}\ast\dot{\R}$ with $(\bar{a},\dot{\bar{r}})\in\bar{A}\ast\bar{R}$ so that $(m,a,\dot{r})\in\bb{S}$. Then, in particular, $k(\bar{a})$ is compatible with $a$, so that (1) is false.

We introduce the abbreviation $\bar{J}:=G\ast H_\al\ast\bar{A}\ast\bar{R}$. Let $(\bar{m},\dot{\bar{q}})$ be a condition in $G\ast H_\al$ so that $(\bar{m},\dot{\bar{q}},\bar{a},\dot{\bar{r}})$ is in $\bar{J}$. Since $(m,a,\dot{r})\in\bb{S}$, $m$ does not force in $\N$, over $N[\bar{J}]$, that $(a,\dot{r})$ is incompatible with $(k(\bar{a}),\bar{r})$. Thus we may find $m^*\leq_\N m$ and $(a^*,\dot{r}^*)$ so that $m^*$ forces over $N[\bar{J}]$ that $(a^*,\dot{r}^*)$ extends $(a,\dot{r})$ and $(k(\bar{a}),\bar{r})$.

This implies that $m^*$ forces over $N[G\ast H_\al\ast\bar{A}]$ that $(a^*,\dot{r}^*)$ extends $(a,\dot{r})$ and $(k(\bar{a}),\bar{r})$. Moreover, we may extend $(m^*,a^*)$ in $\N\times (k(\bb{A})/\bar{A})$ if necessary to assume that it decides the value of $\stem(\dot{r}^*)$.

Now we show that (2) is false. Recalling that the relevant stems have been decided, we see that $\pstem(\bar{r})$ is compatible with $\pstem(\dot{r})$ since both are initial segments of $\pstem(\dot{r}^*)$. Next, if $i<\min(\ell(\dot{r}),\ell(\bar{r}))$, then $f^{\dot{r}^*}_i$ extends both $f^{\bar{r}}_i$ and $f^{\dot{r}}_i$, so that they are compatible. And finally, if either of $\dot{r}$ or $\bar{r}$ is shorter than the other, then the topmost collapse of the shorter of the two is below the next Prikry point of the longer of the two, since this Prikry point is in common with $\dot{r}^*$.

Before showing that (3) and (4) are false, we force in $\N\times(k(\bb{A})/\bar{A})$ below $(m^*,a^*)$ to complete $G\ast H_\al$ to a $V$-generic filter $G^*\times A^*$ over $k(\bb{M}\times\bb{A})$ which satisfies  that $k[G\times\bar{A}]=G\times k[\bar{A}]\seq G^*\times A^*$.

Towards showing that (3) is false, we assume that $\ell(\dot{r})>\ell(\bar{r})$ (otherwise there is nothing to prove), and we will first make an argument in $V[G^*\times A^*]$. We need to find an extension $\bar{a}'$ of $\bar{a}$ so that $k(\bar{a}')$ is compatible with $a$ and so that $\bar{a}'$ forces the negation of the statement specified in (3). Note that we have a lift of $k$ to $k:M[G\times\bar{A}]\lra N[G^*\times A^*]$. By choice of $(m^*,a^*)$, we know that  $r^*\leq_{k(\R)}r,\bar{r}$. Fix some $i$ in $[\ell(\bar{r}),\ell(\dot{r}))$. Then $\al^r_i=\al^{r^*}_i$, and consequently $\al^r_i\in\dom(F^{\bar{r}})$. Next, $f^{r^*}_i$ extends $f^r_i$ as well as $F^{\bar{r}}(\al^r_i)$, so they are compatible as functions. And finally, $f^{r^*}_i$ is in the appropriate collapse poset, and hence $F^{\bar{r}}(\al^r_i)$ is too. The argument of this paragraph has so far taken place in $V[G^*\times A^*]$, but these statements are absolute between this model and $V[G\ast H_\al\times \bar{A}]$. Since $\bar{a}\in\bar{A}$, we may find some $\bar{a}'\leq_{\bb{A}}\bar{a}$ with $\bar{a}'\in\bar{A}$ so that $\bar{a}'$ forces the negation of the statement specified in (3). Lastly, $k[\bar{a}']\in k[\bar{A}]\seq A^*$ and $a\in A^*$ (because $a^*\leq a$ and $a^*\in A^*$), which  completes the proof that (3) is false.

We finish this direction of the lemma by showing that (4) is false. So assume that $a$ is compatible with $k(\bar{a})$. We need to find an extension of $(m,a\cup k(\bar{a}))$ which forces that the statement specified in (4) is false. This is very similar to the proof showing that (3) is false: in $V[G^*\times A^*]$, we have that $r^*\leq r,\bar{r}$. Interchanging the roles of $r$ and $\bar{r}$ from that of the previous paragraph, we see that, in $V[G^*\times A^*]$, the statement specified in (4) is false. Since $V[G^*\times A^*]$ is a generic extension of $V[G\ast H_\al]$ by forcing below $(m^*,a^*)$ in $\N\times k(\bb{A})$, and since $(m^*,a^*)\leq (m,a\cup k(\bar{a}))$, there must be some extension of $(m,a\cup k(\bar{a}))$ which forces that the statement specified in (4) is false.

We have therefore shown that if $(\bar{a},\dot{\bar{r}})$ does not force $(m,a,\dot{r})$ out of $\dot{\bb{S}}$, then the disjunction of (1)-(4) is false.\\

Now show the other direction: supposing that (1)-(4) are all false, we will find a $V[G\ast H_\al]$-generic filter $\bar{A}\ast\bar{R}$ containing $(\bar{a},\dot{\bar{r}})$ so that, in the extension, $(m,a,\dot{r})$ is in $\bb{S}$. We will have three cases based upon comparing the lengths of the stems of $\dot{r}$ and $\dot{\bar{r}}$. A quick note: in order to avoid even heavier notation, we will be reusing the symbols $\bar{A}$, $\bar{R}$, $G^*$, $H^*_\al$, etc. in proving this direction. However, they are not intended to denote the same objects as in the proof of the first direction.\\

\underline{Case 1:} $\ell(\dot{\bar{r}})=\ell(r)$. First we force below $m$ in $\N$ to extend $G\ast H_\al$ to a $V$-generic filter $G^*$ over $k(\bb{M})$. Let $G^*_{\text{tail}}$ denote the $V[G\ast H_\al]$-generic filter added by $\N$. By assumption, (1) is false, and therefore $k(\bar{a})$ is compatible with $a$. Let $a^*\in k(\bb{A})$ extend both, and force in $k(\bb{A})$ below $a^*$ to get a $V[G^*]$-generic filter $A^*$ over $k(\bb{A})$ containing $a^*$. Then $\bar{A}:=k^{-1}[A^*\cap\ran(k)]$ is a $V[G]$-generic filter over $\bb{A}$ containing $\bar{a}$; in fact, by the distributivity of $\ps_\al$, $\bar{A}$ is also $V[G\ast H_\al]$-generic. Note in particular that we have a lift $k:M[G\times\bar{A}]\lra N[G^*\times A^*]$. 

As (2) is false, $\pstem(r)$ is compatible with $\pstem(\bar{r})$, and as these stems have the same length, they are equal. What is more, for all $i<\ell:=\ell(\bar{r})=\ell(r)$, $f_i^r$ is compatible with $f^{\bar{r}}_i$. Let $f^*_i:=f_i^r\cup f_i^{\bar{r}}$, and let $u$ be the stem $u:=\la f^*_0,\al^r_1,f^*_1,\dots,\al^r_{\ell-1},f^*_{\ell-1}\ra$. Let $\de<\ka$ be large enough so that $f^{r^*}_{\ell-1}$ is below $\de$. Finally, $k(F^{\bar{r}})=F^{\bar{r}}$, and so we may find an upper part (with respect to the forcing $k(\R)$), say $F^*$, which extends $F^{\bar{r}}$ and $F^r$ so that $\dom(F^*)\seq[\de,\ka)$. Then $r^*:=(u,F^*)$ extends both $\bar{r}$ and $r$. Now we force over $V[G^*\times A^*]$ in $k(\R)$ below $r^*$ to get a generic filter $R^*$. Note that $\bar{R}:=R^*\cap\R$ is $V[G\times\bar{A}]$-generic over $\R$ and contains $\bar{r}$; in fact $\bar{R}$ is $V[G\ast H_\al\times \bar{A}]$-generic, using the distributivity of $\ps_\al$ in $V[G\times A]$ (Lemma \ref{lemma:distributiveEaston}). Let us abbreviate $(G^*_{\text{tail}}\times A^*)\ast R^*$ by $J^*$. We then have that $(m,a,\dot{r})\in J^*$. Since $k[\bar{A}\ast \bar{R}]\seq A^*\ast R^*$ and since $G^*_{\text{tail}}$ is generic over $N[G\ast H_\al\times\bar{A}]$, we see that $(m,a,\dot{r})$ is in $\bb{S}$. This completes the proof in this case.\\

\underline{Case 2:} $\ell(\dot{\bar{r}})<\ell(r)$. Let $G^*_{\text{tail}}$ be as in Case 1. By assumption, (3) is false, and so (i) there is some $\bar{a}'\leq_{\bb{A}}\bar{a}$ so that $k(\bar{a}')$ is compatible with $a$, and (ii) for any $\bar{a}''\leq_{\bb{A}}\bar{a}'$ with $k(\bar{a}'')$ compatible with $a$, $\bar{a}''$ fails to force the statement specified in (3). Let $\bar{a}'_1$ be the $\bb{A}$-condition $\bar{a}'\cup k^{-1}[a\cap\ran(k)]$. Since $k(\bar{a}'_1)$ is compatible with $a$, we may extend $\bar{a}'_1$ to a condition $\bar{a}''$ which forces that the statement in (3) is false; $k(\bar{a}'')$ is still compatible with $a$. More precisely, $\bar{a}''$ forces, over $V[G]$, the following: for all $i\in[\ell(\dot{\bar{r}}),\ell(\dot{r}))$, $\al^{\dot{r}}_i\in\dom(\dot{F}^{\dot{\bar{r}}})$, $f^{\dot{r}}_i$ is compatible with $\dot{F}^{\dot{\bar{r}}}(\al^{\dot{r}}_i)$ as functions, and $\dot{F}^{\dot{\bar{r}}}(\al^{\dot{r}}_i)$ is in the appropriate collapse poset.

Let $a^*\in k(\bb{A})$ extend $k(\bar{a}'')$ and $a$, and let $A^*$ be a $V[G^*]$-generic filter over $k(\bb{A})$ containing $a^*$. Then $\bar{A}:=k^{-1}[A^*\cap\ran(k)]$ is a $V[G\ast H_\al]$-generic filter over $\bb{A}$ which contains $\bar{a}''$. By our assumption on $\bar{a}''$, we know that for all $i\in [\ell(\bar{r}),\ell(r))$, $f^*_i:=f^r_i\cup F^{\bar{r}}(\al^r_i)$ is a condition in the appropriate collapse poset. Additionally, since (2) is false and $\ell(\bar{r})<\ell(r)$, $\pstem(\bar{r})$ is an initial segment of $\pstem(r)$. Furthermore, for all $i<\ell(\bar{r})$, we may let $f^*_i:=f^{\bar{r}}_i\cup f^r_i$, which is a condition. And finally, $\al^r_{\ell(\bar{r})}$ is above $f^{\bar{r}}_{\ell(\bar{r})-1}$. Let $u$ be the stem $\la f^*_0,\al^r_1,f^*_1,\dots,\al^r_{\ell(r)-1},f^*_{\ell(r)-1}\ra$. To find an appropriate upper part, we again use $k(F^{\bar{r}})=F^{\bar{r}}$ to get some $F^*$ (an upper part with respect to the poset $k(\R)$) extending $F^{\bar{r}}$ and $F^r$. We may also assume that $\min(\dom(F^*))$ is above $f^*_{\ell(r)-1}$, the top collapse of $u$. Then $r^*=(u,F^*)$ is in $k(\R)$ and extends $r$ as well as $\bar{r}$. The rest of the proof in Case 2 is similar to the proof of Case 1: fixing $R^*$ containing $r^*$, we have that $(m,a^*,\dot{r}^*)$, and hence $(m,a,\dot{r})$, is in the generic $(G^*_{\text{tail}}\times A^*)\ast R^*$. Then just as in Case 1, $(m,a,\dot{r})$ is in $\bb{S}$.\\

\underline{Case 3:} $\ell(\dot{\bar{r}})>\ell(r)$. Since (4) is false, we know that there is some $(m^*,a^*)$ which extends $(m,a\cup k(\bar{a}))$ in $\N\times k(\bb{A})$ and which forces the following: for all $i\in[\ell(\dot{r}),\ell(\dot{\bar{r}}))$, $\al^{\dot{\bar{r}}}_i\in\dom(\dot{F}^{\dot{r}})$, $f^{\dot{\bar{r}}}_i$ is compatible with $\dot{F}^{\dot{r}}(\al^{\dot{\bar{r}}}_i)$ as functions, and $\dot{F}^{\dot{r}}(\al^{\dot{\bar{r}}}_i)$ is in the appropriate collapse poset.

Forcing below $m^*$ in $\N$, we complete $G\ast H_\al$ to a $V$-generic filter (here we're recycling notation) $G^*$ over $k(\bb{M})$. Let $G^*_{\text{tail}}$ be the generic filter added over $V[G\ast H_\al]$ by $\N$.

We next force in $k(\bb{A})$ below $a^*$ to get a filter $A^*$ which is $V[G^*]$-generic containing $a^*$. Let $\bar{A}:=k^{-1}[A^*\cap\ran(k)]$.

In $V[G^*\ast A^*]$, using an argument entirely similar to that of Case 2 (by interchanging the roles of $\bar{r}$ and $r$), we see that $r$ and $\bar{r}$ are compatible in $k(\R)$. Let $r^*$ extend both, let $R^*$ be $V[G^*\times A^*]$-generic containing $r^*$, and let $\bar{R}=R^*\cap\R$. Then our starting condition $(m,a,\dot{r})$ is in $\bb{S}$, and $(\bar{a},\dot{r})$ is in $\bar{A}\ast\bar{R}$. This completes the proof of Case 3 and thereby the proof of the lemma.
\end{proof}

For the remainder of the paper, we write conditions in $\bar{\bb{J}}$ and $k(\bar{\bb{J}})$ as four-tuples, for instance, as $(p,f,a,\dot{r})$, where $(p,f)\in k(\bb{M})$. This will help facilitate extensions in the term ordering $\bb{T}$.

We recall that the term forcing $\bb{T}$ is $\ka^+$-closed in $V[G\ast H_\al]$. However, when working with the quotient and its forcing equivalent cousin $\bb{S}$, we need to maintain that the Mitchell part of the condition in $\bb{S}$ doesn't force incompatibility of the $k(\bb{A}\ast\dot{\bb{R}})$-part with some condition in $A\ast R$ (a $V[G\ast H_\al]$-generic over $\bb{A}\ast\dot{\R}$). Nonetheless the term ordering for conditions which do not force this incompatibility is highly closed. The next few items are dedicated to this. For the first of these, we recall that $\N$ projects to $\Add(\ka,[\lam,k(\lam)))$.

\begin{lemma}\label{lemma:densedecide}
In $V[G\ast H_\al]$, there is a dense set $D$ of conditions $(p,f,a,\dot{r})$ in $(\bb{N}\times k(\bb{A}))\ast k(\dot{\R})$ for which there exist nice $(\Add(\ka,[\lam,k(\lam)))\times k(\bb{A}))$-names $\dot{X}$ and $\dot{F}$ so that
$$
(p,f,a)\Vdash\dom(\dot{F}^{\dot{r}})=\dot{X}\we\dot{F}^{\dot{r}}=\dot{F}.
$$
Moreover, given a $V[G\ast H_\al]$-generic $A\ast R$ over $\bb{A}\ast\dot{\R}$, $D$ is still dense in $\bb{S}$.
\end{lemma}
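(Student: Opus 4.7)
My plan is to use a standard term-capturing argument that combines Remark \ref{rmk:term}'s observation that the term forcing $\bb{T}$ (projecting to $\bb{N}$ alongside $\Add(\ka,[\lam,k(\lam)))$) is $\ka^+$-closed in $V[G\ast H_\al]$, together with the fact that $\Add(\ka,[\lam,k(\lam)))\times k(\bb{A})$ is $\ka^+$-c.c. there. Writing a condition in $\bb{N}\times k(\bb{A})$ as a triple $(p,f,a)$ with $p$ the Cohen part, $f\in\bb{T}$, and $a\in k(\bb{A})$, I will extend only in the term coordinate $f$, reading off the values of $\dot{F}^{\dot{r}}$ into nice $(\Add\times k(\bb{A}))$-names.

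Given $(p,f,a,\dot{r})$, I first use the $\ka^+$-closure of $\bb{T}$ to extend $f$ to decide $\stem(\dot{r})$ and a bound $\de<\ka$ with $\dom(\dot{F}^{\dot{r}})\seq[\de,\ka)$. I then enumerate the inaccessibles in $[\de,\ka)$ as $\la\al_\xi:\xi<\ka\ra$ and recursively build a $\leq_\bb{T}$-decreasing sequence $\la f_\xi:\xi<\ka\ra$ below $f$ together with maximal antichains $\la B_\xi:\xi<\ka\ra$ in $\Add\times k(\bb{A})$ below $(p,a)$. At stage $\xi+1$, observe that the collection of $(p',a')\leq(p,a)$ for which some $\bb{T}$-extension of $f_\xi$ paired with $(p',a')$ decides both the membership question ``$\al_\xi\in\dom(\dot{F}^{\dot{r}})$'' and, if positive, the value $\dot{F}^{\dot{r}}(\al_\xi)$, is dense below $(p,a)$ in $\Add\times k(\bb{A})$ (because the question is decided in the full forcing). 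By $\ka^+$-c.c.\ such a maximal antichain $B_\xi$ has size $\leq\ka$; I collect the accompanying $\leq\ka$ term extensions and fuse them via $\ka^+$-closure into a single $f_{\xi+1}\leq_\bb{T}f_\xi$. At limit stages I use $\ka^+$-closure again, and finally let $f^*\leq_\bb{T}f_\xi$ for every $\xi<\ka$. The nice names $\dot{X}$ and $\dot{F}$ then read off the decisions recorded in the $B_\xi$ (placing each $\al_\xi$ above exactly those elements of $B_\xi$ which force $\al_\xi$ into the domain, with analogous treatment of the values). By construction $(p,f^*,a)\Vdash \dom(\dot{F}^{\dot{r}})=\dot{X}\we\dot{F}^{\dot{r}}=\dot{F}$, so $(p,f^*,a,\dot{r})\in D$, establishing density of $D$ in $(\bb{N}\times k(\bb{A}))\ast k(\dot{\R})$.

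For the ``moreover'' clause, I plan to invoke the quotient analysis. Since $\bb{S}$ is forcing equivalent via the dense embedding of Lemma \ref{lemma:embedcollapsecase}(2) to the quotient $\Q$ of the ambient forcing by the generic $(G\times A)\ast(R\times H_\al)$, and since $\bb{A}\ast\dot{\R}$ completely embeds into $(\bb{N}\times k(\bb{A}))\ast k(\dot{\R})$ via the appropriate restriction of $\hat{\iota}$, Fact \ref{fact:quotientStandard}(1) propagates the density of $D$ through the quotient, and the dense embedding then pushes the resulting set to a dense subset of $\bb{S}$. The main obstacle will be verifying that the $\ka$-stage interleaving of $\bb{T}$-lower bounds with $\Add\times k(\bb{A})$ antichain refinements really produces a single term condition $f^*$ below which every relevant decision becomes independent of $\bb{T}$, so that the assembled $\dot{X}$ and $\dot{F}$ are genuine $(\Add\times k(\bb{A}))$-names rather than names from the full three-factor poset.
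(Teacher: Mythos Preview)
Your proposal is correct and matches the paper's own proof, which simply says the first claim is ``a typical application of Easton's lemma, using the fact that $\bb{T}$ is $\ka^+$-closed in $V[G\ast H_\al]$'' and that the second ``follows by typical arguments involving quotients of forcings''; your argument is a faithful expansion of both hints, and your appeal to Fact~\ref{fact:quotientStandard}(1) is exactly the intended route for the ``moreover'' clause. One small point of phrasing: at stage $\xi+1$ the witnessing $\bb{T}$-extensions attached to different members of $B_\xi$ need not be pairwise compatible, so rather than ``collecting and fusing'' them you should build $B_\xi$ and a $\leq_{\bb{T}}$-decreasing sequence simultaneously via a sub-recursion of length $\leq\ka$---but this is the standard maneuver and surely what you had in mind.
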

\begin{proof}
The first statement in the lemma is a typical Easton's lemma argument. Recall that there is a projection from the three-fold product $(\Add(\ka,[\lam,k(\lam)))\times k(\bb{A}))\times\bb{T}$ onto $\N\times k(\bb{A})$. Moreover, $\bb{T}$ is $\ka^+$-closed in $V[G\ast H_\al]$, and so by Easton's lemma, $\bb{T}$ is $\ka^+$-distributive in the extension by the product of the two Cohen posets. With this in mind, fix a condition $(p,f,a,\dot{r})$ in $(\bb{N}\times k(\bb{A}))\ast k(\dot{\R})$, and let $G^{\text{tail}}\times\cal{A}$ be $V[G\ast H_\al]$-generic over $\N\times k(\bb{A})$ containing $(p,f,a)$. Let $\cal{A}'$ be the generic which $G^{\text{tail}}$ induces over $\Add(\ka,[\lam,k(\lam))$. Then $\dom(F^r)$ as well as $F^r$ are members of the extension by $\cal{A}'\times\cal{A}$. Hence we can find nice $(\Add(\ka,[\lam,k(\lam)))\times k(\bb{A}))$-names $\dot{X}$ and $\dot{F}$ in $V[G\ast H_\al]$ so that $\dot{X}[\cal{A}\times\cal{A}']=\dom(F^r)$ and $\dot{F}[\cal{A}\times\cal{A}']=F^r$. Let $(p',f',a')$ be an extension of $(p,f,a)$ in $G^{\text{tail}}\times\cal{A}$ which forces that these interpretations coincide. Then $(p',f',a',\dot{r})$ is an extension of $(p,f,a,\dot{r})$ in $D$.

%fix a condition $(p,f,a,\dot{r})$. Let $A_1\times A_2\times G'$ be $V[G\ast H_\al]$-generic for the three-fold product $\Add(\ka,[\lam,k(\lam))\times k(\bb{A}))\times\bb{T}$ so that the generic contains $(p,f,a)$. Since $\bb{T}$ is $\ka^+$-closed in $V[G\ast H_\al]$, it is $\ka^+$-distributive in the extension by $A_1\times A_2$, by Easton's lemma. Thus $\dom(F^r)$ and $F^r$ are in the extension by $A_1\times A_2$. Now find nice names in $V[G\ast H_\al]$ whose interpretations are equal to $\dom(F^r)$ and $F^r$, and find an extension of $(p,a)$ forcing this.

The second statement in the lemma essentially follows from part (1) of this lemma, Fact \ref{fact:quotientStandard} and Lemma \ref{lemma:embedcollapsecase}. Before sketching the details, recall that in Lemma \ref{lemma:embedcollapsecase}, we are writing (names for) conditions in $\dot{\N}$ using single letters, but that as stated before the current lemma, we are now writing conditions in $\N$ as pairs $(p,f)$. With this in mind, recall the dense embedding from Lemma \ref{lemma:embedcollapsecase}. Furthermore, let $\dot{D}$ be an $\bb{M}\times\dot{\ps}_\al$-name for $D$. Then there is a dense set of conditions $(m,\dot{q},(\dot{p},\dot{f}),a,\dot{r})$ in $[(\bb{M}\ast\dot{\ps}_\al\ast\dot{\N})\times k(\bb{A})]\ast k(\dot{\R})$ so that, letting $\bar{a}\in\bb{A}$ be the condition $k^{-1}[a\cap\ran(k)]$, we have that $(m,\dot{q},\bar{a})$ forces that $(\dot{p},\dot{f},a,\dot{r})$ is in $\dot{D}$. By Fact \ref{fact:quotientStandard} and Lemma \ref{lemma:embedcollapsecase}, we get the desired conclusion.
\end{proof}

\begin{Cor}\label{cor:termExtend}
Suppose that $(p,f,a,\dot{r})$ is a condition in $D$ (see Lemma \ref{lemma:densedecide}) so that $(p,f,a)$ decides the value of $\stem(\dot{r})$. Next, suppose that $(\bar{a},\dot{\bar{r}})\in\bb{A}\ast\dot{\R}$, that $\bar{a}$ decides the value of $\stem(\dot{\bar{r}})$, and that $(\bar{a},\dot{\bar{r}})$ does not force that $(p,f,a,\dot{r})\notin\dot{\S}$. Finally, suppose that $f'\leq_{\bb{T}}f$. Then $(\bar{a},\dot{\bar{r}})$ does not force that $(p,f',a,\dot{r})\notin\dot{\S}$.

Consequently,
\begin{enumerate}
    \item if $(\bar{a},\dot{\bar{r}})$ forces that $(p,f,a,\dot{r})\in\dot{\bb{S}}$, then it also forces that $(p,f',a,\dot{r})\in\dot{\bb{S}}$;
    \item in $V[G\ast H_\al\ast A\ast R]$, where $A\ast R$ is generic for $\bb{A}\ast\dot{\R}$, if $(p,f,a,\dot{r})$ is in $\bb{S}$ and $f'\leq_{\bb{T}}f$, then $(p,f',a,\dot{r})$ is in $\bb{S}$ too.
\end{enumerate}
\end{Cor}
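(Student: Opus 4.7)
The plan is to invoke Lemma \ref{lemma:qa} (the quotient analysis) for both $(p,f,a,\dot{r})$ and $(p,f',a,\dot{r})$ and verify that each of the four clauses (1)--(4) has the same truth value in the two instances. Since $f'\leq_{\bb{T}}f$ implies $(p,f',a)\leq (p,f,a)$ in $\N\times k(\bb{A})$, this extension still decides $\stem(\dot{r})$ to the same value; so Lemma \ref{lemma:qa} applies to both tuples with the same decided stem data, and the main statement of the corollary is exactly the negation of both instantiated disjunctions. Thus it suffices to show clause-by-clause insensitivity to replacing $f$ by $f'$.

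Clauses (1)--(3) will be immediate. Clause (1) involves only $k(\bar{a})$ and $a$. Clause (2) refers to the stems of $\dot{r}$ and $\dot{\bar{r}}$ and to the individual collapses $f^{\dot{r}}_i$ and $f^{\dot{\bar{r}}}_i$, which are encoded in those stems and therefore decided identically by $(p,f,a)$ and $(p,f',a)$ (and by $\bar{a}$). Clause (3) is a statement purely about $\bar{a}'$s forcing behavior over $\bb{A}$, parameterized by the (fixed) stem data of $\dot{r}$; the term-part of the $\N$-condition is not mentioned.

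The genuine work will be clause (4), which asserts that $(p,f, a\cup k(\bar{a}))$ forces, over $V[G\ast H_\al]$ in $\N\times k(\bb{A})$, a particular statement about $\dot{F}^{\dot{r}}$ at the ordinals $\al^{\dot{\bar{r}}}_i$. Here I will invoke Lemma \ref{lemma:densedecide}: the hypothesis $(p,f,a,\dot{r})\in D$ furnishes nice $(\Add(\ka,[\lam,k(\lam)))\times k(\bb{A}))$-names $\dot{X}$ and $\dot{F}$ with $(p,f,a)\Vdash \dom(\dot{F}^{\dot{r}})=\dot{X}\we \dot{F}^{\dot{r}}=\dot{F}$. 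Because these names live over the Cohen/$k(\bb{A})$ projection of $\N\times k(\bb{A})$, the assertion in clause (4) is forced by $(p,f,a\cup k(\bar{a}))$ iff it is already forced by the projected condition $(p,a\cup k(\bar{a}))$ in $\Add(\ka,[\lam,k(\lam)))\times k(\bb{A})$, iff it is forced by $(p,f',a\cup k(\bar{a}))$. This reduction---showing that the forcing of clause (4) factors through the Cohen/$k(\bb{A})$ projection, with the term forcing $\bb{T}$ playing no role---is the main obstacle and the step for which the hypothesis $(p,f,a,\dot{r})\in D$ is essential.

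The two consequences will then follow by standard forcing manipulations. For (1), suppose $(\bar{a},\dot{\bar{r}})$ forces $(p,f,a,\dot{r})\in\dot{\bb{S}}$; then every extension $(\bar{a}',\dot{\bar{r}}')$ in the dense set of conditions with $\bar{a}'$ deciding $\stem(\dot{\bar{r}}')$ satisfies the hypothesis of the main statement, so does not force $(p,f',a,\dot{r})\notin\dot{\bb{S}}$, whence by density $(\bar{a},\dot{\bar{r}})$ forces $(p,f',a,\dot{r})\in\dot{\bb{S}}$. For (2), we pass to an $\bb{A}\ast\dot{\R}$-name $\dot{\bb{S}}$ whose interpretation is $\bb{S}^{V[W]}$: if $(p,f,a,\dot{r})\in\bb{S}$ in $V[W]$, then by density some $(\bar{a}^*,\dot{\bar{r}}^*)\in A\ast R$ with $\bar{a}^*$ deciding the stem forces $(p,f,a,\dot{r})\in\dot{\bb{S}}$, and applying (1) to this condition gives that it also forces $(p,f',a,\dot{r})\in\dot{\bb{S}}$, yielding $(p,f',a,\dot{r})\in\bb{S}$ as desired.
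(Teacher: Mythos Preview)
Your proposal is correct and follows essentially the same approach as the paper's proof. The paper argues by contradiction (assuming $(\bar{a},\dot{\bar{r}})$ forces $(p,f',a,\dot{r})\notin\dot{\bb{S}}$ and deriving that one of (1)--(4) must hold for $(p,f,a,\dot{r})$ as well), while you argue directly that each clause of Lemma~\ref{lemma:qa} has the same truth value for the two tuples; but the substance is identical, including the key use of the membership $(p,f,a,\dot{r})\in D$ to replace $\dot{F}^{\dot{r}}$ by the $(\Add(\ka,[\lam,k(\lam)))\times k(\bb{A}))$-names $\dot{X},\dot{F}$ and thereby factor clause~(4) through the projection $(p,a\cup k(\bar{a}))$.
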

\begin{proof}
Suppose for a contradiction that $(\bar{a},\dot{\bar{r}})$ does force that $(p,f',a,\dot{r})\notin\dot{\S}$. Then one of (1)-(4) of Lemma \ref{lemma:qa} holds. Since $(\bar{a},\dot{\bar{r}})$ does not force $(p,f,a,\dot{r})$ out of $\dot{\bb{S}}$, and since the relevant stems have been decided, we know that (1)-(3) of Lemma \ref{lemma:qa} are false with respect to $(\bar{a},\dot{\bar{r}})$ and $(p,f,a,\dot{r})$. Thus (1)-(3) of Lemma \ref{lemma:qa} are false with respect to $(\bar{a},\dot{\bar{r}})$ and the further condition $(p,f',a,\dot{r})$.

Thus (4) of that lemma must hold with respect to $(\bar{a},\dot{\bar{r}})$ and $(p,f',a,\dot{r})$. Since $(p,f,a,\dot{r})$ is in $D$ (see Lemma \ref{lemma:densedecide}), let $\dot{X}$ and $\dot{F}$ be the $(\Add(\ka,[\lam,k(\lam)))\times k(\bb{A}))$-names which $(p,f,a)$ forces are equal to $\dom(\dot{F}^{\dot{r}})$ and $\dot{F}^{\dot{r}}$ respectively. By (4) of Lemma \ref{lemma:qa}, letting $a'$ abbreviate $a\cup k(\bar{a})$, the condition $(p,f',a')$ forces that the statement specified in (4) holds. It therefore forces that this statement holds with respect to the names $\dot{X}$ and $\dot{F}$. However, we then see that $(p,a')$ forces that the statement in (4) holds of $\dot{X}$ and $\dot{F}$, and thus $(p,f,a')$ must force this about $\dot{X}$ and $\dot{F}$. We therefore see that (4) of Lemma \ref{lemma:qa} holds with respect to $(p,f,a,\dot{r})$ and $(\bar{a},\dot{\bar{r}})$, which implies that $(\bar{a},\dot{\bar{r}})$ forces $(p,f,a,\dot{r})\notin\dot{\S}$, a contradiction.

For (1) of the ``consequently" part, suppose that some extension $(\bar{a}',\dot{\bar{r}}')$ of $(\bar{a},\dot{\bar{r}})$ forces that $(p,f',a,\dot{r})\notin\dot{\bb{S}}$; we may assume that $\bar{a}'$ decides the value of $\stem(\dot{\bar{r}}')$. Then, applying the first part of the corollary, we conclude that $(\bar{a}',\dot{\bar{r}}')$ forces that $(p,f,a,\dot{r})\notin\dot{\bb{S}}$, which contradicts the fact that $(\bar{a},\dot{\bar{r}})$ forces this condition into $\dot{\bb{S}}$.

Item (2) of the ``consequently" part of the corollary is now immediate.
\end{proof}

The next lemma shows that we can find a condition in $\S$ so that we only have to extend the term part of the condition to decide information about the branch $\dot{\tau}$; this will be useful later when we build a splitting tree of conditions since the term ordering is $\ka^+$-closed in $V[G\ast H_\al]$.

\begin{lemma}\label{lemma:extendTerm}
For any $V[G\ast H_\al]$-generic $\bar{A}\ast\bar{R}$ over $\bb{A}\ast\dot{\R}$, $V[G\ast H_\al\ast\bar{A}\ast\bar{R}]$ satisfies that there is a condition $(p,f,a,\dot{r})\in\S\cap D$  so that the following holds: for any $x\subset T$, any $\be<\lam$, and any $(p',f',a',\dot{r}')\leq_{\S}(p,f,a,\dot{r})$ with $(p',f',a',\dot{r}')$ also in $D$, if $f'\leq_{k(\bb{T})}f$ and if $(p',f',a',\dot{r}')\Vdash_{\bb{S}}\dot{\tau}\res\be=x$, then $(p,f',a,\dot{r})\Vdash_{\bb{S}}\dot{\tau}\res\be=x$.
\end{lemma}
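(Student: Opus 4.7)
The plan is to argue by contradiction. Suppose no condition in $\bb{S}\cap D$ has the stated property. Then for every $(p,f,a,\dot{r})\in\bb{S}\cap D$ (with decided stem, using Lemma \ref{lemma:densedecide}) there exist $\be<\lam$, $x\subset T$, and $(p',f',a',\dot{r}')\in\bb{S}\cap D$ with $f'\leq_{\bb{T}}f$ forcing $\dot\tau\res\be=x$, while the ``term-only'' weakening $(p,f',a,\dot{r})$ does not force this; that weakening does remain in $\bb{S}$ by Corollary \ref{cor:termExtend}(2). Since $\dot\tau$ names a cofinal branch through $T$, we may further refine $(p,f',a,\dot{r})$ in $\bb{S}\cap D$ to some condition forcing $\dot\tau\res\be=y$ for some $y\neq x$.

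I would use this to recursively construct a binary splitting tree $\la(p_s,f_s,a_s,\dot{r}_s):s\in 2^{<\ka}\ra$ of conditions in $\bb{S}\cap D$, together with ordinals $\be_s<\lam$ and distinct $x_s^0,x_s^1\subset T$, satisfying: if $s\sqsubseteq t$ then $(p_t,f_t,a_t,\dot{r}_t)\leq_{\bb{S}}(p_s,f_s,a_s,\dot{r}_s)$ and $f_t\leq_{\bb{T}}f_s$; and for each $s\in 2^{<\ka}$ and $i\in 2$, $(p_{s^{\frown}\la i\ra},f_{s^{\frown}\la i\ra},a_{s^{\frown}\la i\ra},\dot{r}_{s^{\frown}\la i\ra})$ forces $\dot\tau\res\be_s=x_s^i$. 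At limit stages $\ga<\ka$, lower bounds of the four components are taken using the available closure: the $\ka^+$-closure of $\bb{T}$ in $V[G\ast H_\al]$ for the $f_s$; the $\ka$-closure of the Cohen part of $\bb{N}$ for the $p_s$; the $\ka$-closure of $k(\bb{A})$ for the $a_s$; and the $\ka$-closure of the direct extension order on $k(\dot{\R})$ for the $\dot{r}_s$, after arranging in advance that stems stabilize. Corollary \ref{cor:termExtend} keeps intermediate conditions in $\bb{S}$, and density of $D$ in $\bb{S}$ lets us refine into $D$ at each node.

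At height $\ka$, each branch $g\in 2^\ka$ yields a condition $(p_g,f_g,a_g,\dot{r}_g)\in\bb{S}\cap D$ extending every $(p_{g\res\ga},\ldots)$. The contradiction then mirrors the counting argument used after this lemma to rule out a branch through $T$: since $2^\ka=\ka^{++}$ by the Mitchell-style cardinal arithmetic, we have $\ka^{++}$ many such conditions; fix $\al^*<\lam$ above every $\be_s$, and extend each $(p_g,\ldots)$ to decide $\dot\tau(\al^*)$ as some $\ga_g\in T_{\al^*}$, a level of size $\leq\ka^+$. Using the $\ka^+$-c.c.\ of $k(\dot{\R})$ to stabilize Prikry stems across a size-$\ka^{++}$ subset of branches together with pigeonhole on $T_{\al^*}$, one finds distinct $g_0\neq g_1$ whose extensions share both a stem and $\ga_g$; the splitting node at which $g_0,g_1$ first diverge then produces two conditions forcing $\dot\tau\res\be_s=x_s^0\neq x_s^1$ while sharing $\dot\tau(\al^*)=\ga_{g_0}=\ga_{g_1}$, contradicting the tree ordering of $T$.

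The main obstacle is the limit-stage construction, ensuring that all four components admit simultaneous lower bounds while the condition stays in $\bb{S}\cap D$. The Prikry-type component is the most delicate, since $k(\dot{\R})$ is only $\ka$-closed under its direct extension order. I would handle this via fusion-style bookkeeping that stabilizes stems of the $\dot{r}_s$ and refines upper parts along branches, appealing to the Prikry lemma for $k(\dot{\R})$ (Lemma \ref{lemma:pl4r}) and the quotient analysis (Lemma \ref{lemma:qa}) to verify that the resulting lower-bound conditions remain in $\bb{S}$.
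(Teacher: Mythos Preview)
Your splitting-tree approach has a genuine gap at the top. You propose a tree $\la(p_s,f_s,a_s,\dot r_s):s\in 2^{<\ka}\ra$ with all four coordinates decreasing along branches, and then claim that for each $g\in 2^\ka$ you obtain a condition $(p_g,f_g,a_g,\dot r_g)\in\bb{S}$ extending every $(p_{g\res\ga},\ldots)$. But $\Add(\ka,[\lam,k(\lam)))$ (the Cohen part of $\N$) and $k(\bb{A})$ are only $\ka$-closed, not $\ka^+$-closed: a $\ka$-length decreasing sequence of Cohen conditions typically has union of size $\ka$, hence no lower bound. The direct-extension order on $k(\dot\R)$ has the same defect. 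So $p_g$, $a_g$, and $\dot r_g$ simply do not exist in general, and the counting argument never gets off the ground. (There is a secondary issue you do not address: the construction must take place in $V[G\ast H_\al]$, since after forcing with $\bb{A}\ast\dot\R$ the cardinal $\ka$ is singular and none of the claimed closure is available there. This means the facts about $\dot\bb{S}$ at each node have to be \emph{forced} by suitable conditions in $\bb{A}\ast\dot\R$, and tracked throughout.)

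The paper's proof avoids this by abandoning the splitting-tree shape entirely. It runs a \emph{linear} recursion of length $\ka^+$ in $V[G\ast H_\al]$ in which only the term coordinate $\la f_\be:\be<\ka^+\ra$ is decreasing (this is harmless since $\bb{T}$ is $\ka^+$-closed there). At each stage $\be$ one starts fresh from $(1,f'_\be,1,1)$, extends into $\bb{S}\cap D$ to decide $\dot\tau\res\sup_{\de<\be}\ga_\de$, and then applies the splitting once to produce two conditions $(p^i_\be,f_\be,a^i_\be,\dot r^i_\be)$ disagreeing about $\dot\tau\res\ga_\be$; one also records a witness $(\bar a_\be,\dot{\bar r}_\be)\in\bb{A}\ast\dot\R$ forcing all of this about $\dot\bb{S}$, with stems aligned so that amalgamation is later possible (this is where the Prikry lemma for $\R$ in $V[G\ast H_\al\ast A]$ is used). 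The contradiction then comes from a freezing-out argument on $\ka^+$: thin to $X_2\in[\ka^+]^{\ka^+}$ on which the finitely many stems are constant and the Cohen and $k(\bb{A})$ parts are pairwise compatible (using $\ka^+$-Knasterness, not closure), pick $\be<\de\in X_2$, and amalgamate the stage-$\be$ and stage-$\de$ data to violate the tree order. In short, the paper replaces your tree-plus-closure strategy with a chain-plus-chain-condition strategy, precisely because only the $\bb{T}$-coordinate is closed enough to survive a recursion of the required length.
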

\begin{proof}
Suppose otherwise, and let $\bar{A}\ast\bar{R}$ be a counterexample. Then for any condition $(p,f,a,\dot{r})\in\bb{S}\cap D$ there is some $x\subset T$, some $\be<\lam$, and some $(p',f',a',\dot{r}')\leq_{\bb{S}}(p,f,a,\dot{r})$ with $(p',f',a',\dot{r}')$ in $D$ which satisfy that $f'\leq_{\bb{T}}f$ and that $(p',f',a',\dot{r}')\Vdash_{\bb{S}}\dot{\tau}\res\be=x$, but where $(p,f',a,\dot{r})$ does not force in $\bb{S}$ that $\dot{\tau}\res\be=x$. Thus there exist a condition $(p^*,f^*,a^*,\dot{r}^*)$ extending $(p,f',a,\dot{r})$ in $\bb{S}$ with $f^*\leq_{\bb{T}}f'$ and some $x^*\subset T$ so that $x^*\neq x$ and $(p^*,f^*,a^*,\dot{r}^*)\Vdash_{\bb{S}}\dot{\tau}\res\be =x^*$. We may also extend, if necessary, to assume that $(p^*,f^*,a^*,\dot{r}^*)$ is in $D$. Lastly, note that $(p',f^*,a',\dot{r}')$ is still in $D$.

In summary, given any condition $(p,f,a,\dot{r})\in\bb{S}\cap D$, we may find $p_i$, $a_i$, $\dot{r}_i$, and $x_i$ for $i\in 2$, as well as $f^*$ and $\be$ so that
\begin{enumerate}
    \item[(a)] $f^*$ extends $f$ in the term ordering and $(p_i,f^*,a_i,\dot{r}_i)$ extends $(p,f,a,\dot{r})$;
    \item[(b)] $(p_i,f^*,a_i,\dot{r}_i)\in\bb{S}\cap D$;
    \item[(c)] $(p_i,f^*,a_i,\dot{r}_i)\Vdash_{\bb{S}}\dot{\tau}\res\be=x_i$;
    \item[(d)] $x_0\neq x_1$.
\end{enumerate}

Back in $V[G\ast H_\al]$, let $(\bar{a},\dot{\bar{r}})$ be a condition in $\bar{A}\ast\bar{R}$ which forces that the above holds. We will now construct, by recursion on $\be<\ka^+$ in $V[G\ast H_\al]$, various objects satisfying certain conditions. The objects are $p^i_\be$, $a^i_\be$, $\dot{r}^i_\be$, $\dot{x}^i_\be$, for $i\in 2$, as well as $\dot{x}_\be,\ga_\be,f_\be,\bar{a}_\be$, and $\dot{\bar{r}}_\be$. They will satisfy the following:
\begin{enumerate}
    \item $\la\ga_\be:\be<\ka^+\ra$ is strictly increasing, and if $\be$ is limit, then $\ga_\be>\sup_{\de<\be}\ga_\de$;
    \item $\la f_\be:\be<\ka^+\ra$ is decreasing in $\bb{T}$;
    \item $(\bar{a}_\be,\dot{\bar{r}}_\be)$ is below $(\bar{a},\dot{\bar{r}})$ in $\bb{A}\ast\dot{\R}$, and $\bar{a}_\be$ decides $\stem(\dot{\bar{r}}_\be)$;
    \item $(p^i_\be,f_\be,\dot{a}^i_\be)$ decides the value of $\stem(\dot{r}^i_\be)$, and $\stem(\dot{\bar{r}}_\be)\leq^*\stem(\dot{r}^i_\be)$ (that this is direct will be important later when we amalgamate conditions in $\bb{S}$);
\end{enumerate}
and $(\bar{a}_\be,\dot{\bar{r}}_\be)$ forces that
\begin{enumerate}
    \item[(5)] $(p^i_\be,f_\be,a^i_\be,\dot{r}^i_\be)\in\dot{\bb{S}}$;
    \item[(6)] $(p^i_\be,f_\be,a^i_\be,\dot{r}^i_\be)$ forces in $\dot{\bb{S}}$ that $\dot{\tau}\res\sup_{\de<\be}\ga_\de=\dot{x}_\be$ and that $\dot{\tau}\res\ga_\be=\dot{x}^i_\be$;
    \item[(7)] $\dot{x}^0_\be\neq \dot{x}^1_\be$.
\end{enumerate}
Suppose that $\be<\ka^+$ (either limit or successor) and that we have constructed these objects for all $\de<\be$. Since $\la f_\de:\de<\be\ra$ is decreasing in $\bb{T}$, we may find a lower bound $f'_\be$. Let $A\ast R$ be an arbitrary $V[G\ast H_\al]$-generic filter containing $(\bar{a},\dot{\bar{r}})$, and observe that $(1,f'_\be,1,1)$ is in $\bb{S}$, since, by Lemma \ref{lemma:qa}, no extension of $(\bar{a},\dot{\bar{r}})$ can force otherwise. Thus we may find an extension $(p^*_\be,f^*_\be,a^*_\be,\dot{r}^*_\be)$ of $(1,f'_\be,1,1)$ which is in $\bb{S}\cap D$ and which decides, in $\bb{S}$, the value of $\dot{\tau}\res\sup_{\de<\be}\ga_\de$, say, as $x_\be$. Applying the summary from the second paragraph of the proof, we may then find $p^i_\be$, $a^i_\be$, $\dot{r}^i_\be$, for $i\in 2$, as well as $x^0_\be\neq x^1_\be$,  $\ga_\be$, and $f_\be$ so that $f_\be\leq_{\bb{T}}f^*_\be$; so  that each $(p^i_\be,f_\be,a^i_\be,\dot{r}^i_\be)$ is in $\bb{S}\cap D$; so that $(p^i_\be,f_\be,a^i_\be,\dot{r}^i_\be)$ extends $(p^*_\be,f^*_\be,a^*_\be,\dot{r}^*_\be)$ and forces in $\bb{S}$ that $\dot{\tau}\res\ga_\be=x^i_\be$; and so that $(p^i_\be,f_\be,a^i_\be)$ decides the value of $\stem(\dot{r}^i_\be)$. Note that since $x^0_\be\neq x^1_\be$, we must have that $\ga_\be>\sup_{\de<\be}\ga_\de$. 

Let $\bar{r}_{\be,0}\in R$ be a condition which forces that $(p^*_\be,f^*_\be,a^*_\be,\dot{r}^*_\be)\Vdash_{\dot{\bb{S}}}\dot{\tau}\res\sup_{\de<\be}\ga_\de=\dot{x}_\be$; that $\dot{x}^0_\be\neq\dot{x}^1_\be$; and that the following $\R$-forcing-language sentence $\vp_\be$ holds (using canonical names for objects in $V[G\ast H_\al]$):
\begin{enumerate}
    \item[$(\vp_\be):$] for $i\in 2$, $(p^i_\be,f_\be,a^i_\be,\dot{r}^i_\be)$ is in $\dot{\bb{S}}$ and $(p^i_\be,f_\be,a^i_\be,\dot{r}^i_\be)\Vdash_{\dot{\bb{S}}}\dot{\tau}\res\ga_\be=\dot{x}^i_\be$.
\end{enumerate}
Next, we may extend (and relabel) $\bar{r}_{\be,0}$ if necessary to assume that $\stem(\bar{r}_{\be,0})\leq\stem(\dot{r}^i_\be)$ for $i\in 2$; in particular $\ell(\bar{r}_{\be,0})$ is at least as large as the other two lengths. We modify the above conditions, still working in $V[G\ast H_\al\ast A\ast R]$, in order to secure item (4) above.

We first extend $(p^0_\be,f_\be,a^0_\be,\dot{r}^0_\be)$ in $\bb{S}$ (if $\ell(\bar{r}_{\be,0})=\ell(\dot{r}^0_\be)$, then this extension is trivial). We introduce the abbreviation
$$
\bar{\al}_j:=\al^{\bar{r}_{\be,0}}_j,\;\text{ for all }j<\ell(\bar{r}_{\be,0}).
$$ 
Indeed, extend $(p^0_\be,f_\be,a^0_\be,\dot{r}^0_\be)$ to a condition $(p^{0,*}_\be,f^{0,*}_\be,a^{0,*}_\be,\dot{r}^{0,*}_\be)$ so that:
\begin{itemize}
    \item $(p^{0,*}_\be,f^{0,*}_\be,a^{0,*}_\be)$ decides the values of $\dot{F}^{\dot{r}^0_\be}(\bar{\al}_j)$ for all $j\in [\ell(\dot{r}^0_\be),\ell(\bar{r}_{\be,0}))$, say as $h_{\be,0,j}$;
    \item $f^{0,*}_\be$ extends $f_\be$ in $\bb{T}$;
    \item $\dot{r}^{0,*}_\be$ is a condition with stem equal to 
    $$
    \stem(\dot{r}^0_\be)\,^{\frown}\la \bar{\al}_{\ell(\dot{r}^0_\be)}, h_{\be,0,\ell(\dot{r}^0_\be)},\dots,\bar{\al}_{\ell(\bar{r}_{\be,0})-1}, h_{\be,0,\ell(\bar{r}_{\be,0})-1} \ra.
    $$
\end{itemize}
Next observe that $(p^1_\be,f^{0,*}_\be,a^1_\be,\dot{r}^1_\be)$ is an extension of $(p^1_\be,f_\be,a^1_\be,\dot{r}^1_\be)$ which is in $\bb{S}$, since the latter condition is in $\bb{S}\cap D$ and the former only extends on the $\bb{T}$-part (see Corollary \ref{cor:termExtend}). We may therefore extend it, still in $\bb{S}$, to a condition $(p^{1,*}_\be,f^{**}_\be,a^{1,*}_\be,\dot{r}^{1,*}_\be)$ so that:
\begin{itemize}
    \item $(p^{1,*}_\be,f^{**}_\be,a^{1,*}_\be)$ decides the values of $\dot{F}^{\dot{r}^1_\be}(\bar{\al}_j)$ for all $j\in [\ell(\dot{r}^1_\be),\ell(\bar{r}_{\be,0}))$, say as $h_{\be,1,j}$;
    \item $f^{**}_\be$ extends $f^{0,*}_\be$ (and hence $f_\be$) in $\bb{T}$;
    \item $\dot{r}^{1,*}_\be$ is a condition with stem equal to 
    $$
    \stem(\dot{r}^1_\be)\,^{\frown}\la \bar{\al}_{\ell(\dot{r}^1_\be)}, h_{\be,1,\ell(\dot{r}^1_\be)},\dots,\bar{\al}_{\ell(\bar{r}_{\be,0})-1}, h_{\be,1,\ell(\bar{r}_{\be,0})-1} \ra.
    $$
\end{itemize}
Note that $(p^{0,*}_\be,f^{**}_\be,a^{0,*}_\be,\dot{r}^{0,*}_\be)$ is an extension of $(p^{0,*}_\be,f^*_\be,a^{0,*}_\be,\dot{r}^{0,*}_\be)$ and is in $\bb{S}$.

Next, we amalgamate the relevant stems in $R$; that is to say, we find a condition $\bar{r}^*_{\be,1}\in R$ which is a direct extension of $\bar{r}_{\be,0}$ so that $\stem(\bar{r}^*_{\be,1})$ is a direct (stem) extension of $\stem(\dot{r}^{i,*}_\be)$ for $i\in 2$. The last main task to finish the construction of the objects at stage $\be$ is to find a direct extension of $\bar{r}^*_{\be,1}$ which forces over $V[G\ast H_\al\ast A]$ that all of the above facts hold. Here are the details.

In $V[G\ast H_\al\ast A]$, we apply the Prikry Lemma for $\R$ (see Lemma \ref{lemma:pl4r}) to find a direct extension $\bar{r}^*_\be\leq^*\bar{r}_{\be,1}$ so that $\bar{r}^*_\be$ decides the following $\R$-forcing-language sentence $\vp^*_\be$ (again using canonical names for objects in $V[G\ast H_\al]$):
\begin{enumerate}
    \item[$(\vp^*_\be)$:] for $i\in 2$, $(p^{i,*}_\be,f^{**}_\be,a^{i,*}_\be,\dot{r}^{i,*}_\be)$ is in $\dot{\bb{S}}$, and $(p^{i,*}_\be,f^{**}_\be,a^{i,*}_\be,\dot{r}^{i,*}_\be)\Vdash_{\dot{\bb{S}}}\dot{\tau}\res\ga_\be=\dot{x}^i_\be$.
\end{enumerate}
We claim that $\bar{r}^*_\be$ forces $\vp^*_\be$. Suppose otherwise, so that by assumption, $\bar{r}^*_\be$ forces $\neg\vp^*_\be$. Applying the Prikry lemma successively (four times), we may directly extend and relabel to assume that $\bar{r}^*_\be$ decides the truth value of each of the four conjuncts in $\vp^*_\be$; since $\bar{r}^*_\be$ forces that $\vp^*_\be$ is false, there must be some conjunct $\psi_\be$ of $\vp^*_\be$ which $\bar{r}^*_\be$ forces is false.

We claim that $\psi_\be$ is not the sentence ``$(p^{i,*}_\be,f^{**}_\be,a^{i,*}_\be,\dot{r}^{i,*}_\be)$ is in $\dot{\bb{S}}$" for some $i$. Suppose otherwise, and we will derive a contradiction. Since $V[G\ast H_\al\ast A\ast R]$ satisfies that $(p^{i,*}_\be,f^{**}_\be,a^{i,*}_\be,\dot{r}^{i,*}_\be)$ is in $\bb{S}$, $V[G\ast H_\al\ast A]$ satisfies that $(p^{i,*}_\be,f^{**}_\be,a^{i,*}_\be)$ is in $\N\times(k(\bb{A})/\bar{A})$. Thus we may force below $(p^{i,*}_\be,f^{**}_\be,a^{i,*}_\be)$ in that poset to obtain a $V[G\ast H_\al]$-generic filter $G^*_\N\times A^*$ containing $(p^{i,*}_\be,f^{**}_\be,a^{i,*}_\be)$, noting that $(G\ast H_\al\ast  G^*_\N)\times A^*$ contains the $k$-image of $G\times A$. Recalling that (the lift of) $k$ fixes conditions in $\R$, we claim that $\bar{r}^*_\be$ is compatible with $r^{i,*}_\be$. By construction, they have the same length. Next, $\stem(\bar{r}^*_\be)\leq^*\stem(\bar{r}^*_{\be,1})\leq^*\stem(\dot{r}^{i,*}_\be)$. Thus $\bar{r}^*_\be$ and $r^{i,*}_\be$ are compatible in $k(\R)$. Thus we may find a $V[G^*\times A^*]$-generic filter $R^*$ containing $\bar{r}^*_\be$ and $r^{i,*}_\be$. Then $(p^{i,*}_\be,f^{**}_\be,a^{i,*}_\be,\dot{r}^{i,*}_\be)$ is in the filter $(G^*_\N\times A^*)\ast R^*$. Thus in $V[G\ast H_\al\ast A\ast (R^*\cap\R)]$, $(p^{i,*}_\be,f^{**}_\be,a^{i,*}_\be,\dot{r}^{i,*}_\be)$ is in $\bb{S}$, and since $\bar{r}^*_\be\in (R^*\cap\R)$, this contradicts the fact that $\bar{r}^*_\be$ forces that $\psi_\be$ is false.

In light of the previous paragraph and the fact that $\bar{r}^*_\be$ decides the truth values of the four conjuncts in $\vp^*_\be$, we must then have that $\bar{r}^*_\be$ forces $(p^{i,*}_\be,f^{**}_\be,a^{i,*}_\be,\dot{r}^{i,*}_\be)\in\dot{\bb{S}}$ for $i\in 2$. Consequently, $\psi_\be$ (the conjunct which $\bar{r}^*_\be$ forces is false) must be the sentence ``$(p^{i,*}_\be,f^{**}_\be,a^{i,*}_\be,\dot{r}^{i,*}_\be)\Vdash_{\dot{\bb{S}}}\dot{\tau}\res\ga_\be=\dot{x}^i_\be$" for some $i\in 2$. However, $(p^{i,*}_\be,f^{**}_\be,a^{i,*}_\be,\dot{r}^{i,*}_\be)$ extends $(p^i_\be,f_\be,a^i_\be,\dot{r}^i_\be)$. And moreover, $\bar{r}^*_\be$ extends $\bar{r}_{\be,0}$ which forces that $(p^i_\be,f_\be,a^i_\be,\dot{r}^i_\be)\Vdash_{\dot{\bb{S}}}\dot{\tau}\res\ga_\be=\dot{x}^i_\be$. As a result, $(p^{i,*}_\be,f^{**}_\be,a^{i,*}_\be,\dot{r}^{i,*}_\be)$ extends $(p^i_\be,f_\be,a^i_\be,\dot{r}^i_\be)$ \emph{in the poset} $\dot{\bb{S}}$, and thus $\bar{r}^*_\be$ must force that $(p^{i,*}_\be,f^{**}_\be,a^{i,*}_\be,\dot{r}^{i,*}_\be)\Vdash_{\dot{\bb{S}}}\dot{\tau}\res\ga_\be=\dot{x}^i_\be$. This is a contradiction.

To summarize, we have a condition $\bar{r}^*_\be$ in $\R$ which forces $\vp^*_\be$. Since $(p^{i,*}_\be,f^{**}_\be,a^{i,*}_\be,\dot{r}^{i,*}_\be)$ extends  $(p^*_\be,f^*_\be,a^*_\be,\dot{r}_\be)$, since $\bar{r}_{\be,0}$ forces that $(p^*_\be,f^*_\be,a^*_\be,\dot{r}^*_\be)\Vdash_{\dot{\bb{S}}}\dot{\tau}\res\sup_{\de<\be}\ga_\de=\dot{x}_\be$, and since $\bar{r}^*_\be$ extends $\bar{r}_{\be,0}$, we have that $\bar{r}^*_\be$ forces that $(p^{i,*}_\be,f^{**}_\be,a^{i,*}_\be,\dot{r}^{i,*}_\be)\Vdash_{\dot{\bb{S}}}\dot{\tau}\res\sup_{\de<\be}\ga_\de=\dot{x}_\be$. Similarly, $\bar{r}^*_\be$ still forces $\dot{x}^0_\be\neq\dot{x}^1_\be$, since $\bar{r}_{\be,0}$ does.

This part of the argument has taken place in $V[G\ast H_\al\ast A]$. Thus we may find a condition $\bar{a}_\be$ in $A$ which extends $\bar{a}$ so that $(\bar{a}_\be,\dot{\bar{r}}^*_\be)$ forces the above and so that $\bar{a}_\be$ decides the value of $\stem(\dot{\bar{r}}^*_\be)$. This completes the $\ka^+$-length construction of the objects satisfying (1)-(7).\\

To complete the proof, we use a freezing out argument which allows us to amalgamate and eventually obtain problematic conditions in the quotient. First, let $X_0\in [\ka^+]^{\ka^+}$ be chosen so that for all $i\in 2$ and $\be<\de$ in $X_0$, $\stem(\dot{r}^i_\be)=\stem(\dot{r}^i_\de)$ and $\stem(\dot{\bar{r}}_\be)=\stem(\dot{\bar{r}}_\de)$. Since $a^i_\be\cup k(\bar{a}_\be)$ is a condition in $k(\bb{A})$ for each $i\in 2$ and each $\be\in X_0$, we may apply the fact that $k(\bb{A})$ is $\ka^+$-Knaster to find $X_1\in [X_0]^{\ka^+}$ so that for each $i\in 2$, the elements on the sequence $\la a^i_\be\cup k(\bar{a}_\be):\be\in X_1\ra$ are pairwise compatible. Finally, let $X_2\in [X_1]^{\ka^+}$ so that for each $i\in 2$, the (Cohen) conditions on the sequence $\la p^i_\be:\be\in X_2\ra$ are pairwise compatible.

For the remainder of the proof, fix $\be<\de$ in $X_2$. Set $p^i:=p^i_\be\cup p^i_\de$, set $a^i:=a^i_\be\cup k(\bar{a}_\be)\cup a^i_\de\cup k(\bar{a}_\de)$, and let $\dot{r}^i$ be a name forced to equal the weakest condition extending $\dot{r}^i_\be$ and $\dot{r}^i_\de$, if such exists, and forced to equal 1 otherwise. Since $a^i$ decides the value of both of the stems of $\dot{r}^i_\be$ and $\dot{r}^i_\de$ and forces that these stems are equal, $a^i$ forces that $\dot{r}^i$ has the same stem. Next, define $\bar{a}':=\bar{a}_\be\cup\bar{a}_\de$, and let $\dot{\bar{r}}'$ name the weakest extension of $\dot{\bar{r}}_\be,\dot{\bar{r}}_\de$, noting that $\bar{a}'$ forces that $\stem(\dot{\bar{r}}')=\stem(\dot{\bar{r}}_\be)=\stem(\dot{\bar{r}}_\de)$. In particular, $\bar{a}'$ decides the value of $\stem(\dot{\bar{r}}')$.

Our goal is to find a generic extension of $V[G\ast H_\al]$ in which for each $i\in 2$, $(p^i,f_\de,a^i,\dot{r}^i)$ is in the quotient.

To see this, recall (from condition (5) of the construction) that $(\bar{a}_\be,\dot{\bar{r}}_\be)$ forces that $(p^i_\be,f_\be,a^i_\be,\dot{r}^i_\be)\in\dot{\S}$. Thus, for all $\tilde{a}\leq_{\bb{A}}\bar{a}_\be$, $k(\tilde{a})$ is compatible with $a^i_\be$. Similarly for $\bar{a}_\de$. Thus for all $\tilde{a}\leq\bar{a}'$, $k(\tilde{a})$ is compatible with $a^i_\be\cup a^i_\de$. Now force in $\bb{A}$ below $\bar{a}'$ to get a generic $\bar{A}'$, noting that each $a^i$ is in $k(\bb{A})/\bar{A}'$. In $V[G\ast H_\al\ast\bar{A}']$, apply Lemma \ref{lemma:pl4r} to let $u$ be a direct extension of $\bar{r}'$ which decides whether or not $(p^0,f_\de,a^0,\dot{r}^0)$ is in $\dot{\bb{S}}$. We assert that $u$ forces that it is in $\dot{\bb{S}}$. In $V[G\ast H_\al\ast\bar{A}']$, we have that $(p^0,f_\de,a^0)$ is in the poset $\N\times (k(\bb{A})/\bar{A}')$. Thus, forcing below $(p^0,f_\de,a^0)$, we complete $G\ast H_\al\ast\bar{A}'$ to a generic $G^*\times A^*$ which contains $(p^0,f_\de,a^0)$. Then $u$ is compatible with $r^0$ since $\ell(u)=\ell(\bar{r}')=\ell(r^0)$, and since $\stem(u)\leq^*\stem(\bar{r}')\leq^*\stem(r^0)$. Now let $R^*$ be a generic for $k(\R)$ containing $r^0$ and $k(u)=u$, and we see that $(p^0,f_\de,a^0,\dot{r}^0)$ is in $(G^*\times A^*)\ast R^*$, and hence $V[G\ast H_\al\ast\bar{A}'\ast (R^*\cap\R)]$ satisfies that it is in $\bb{S}$. Since $u\in (R^*\cap\R)$ and since $R^*\cap\R$ is generic over $V[G\ast H_\al\ast \bar{A}']$, this shows that $u$ does not force $(p^0,f_\de,a^0,\dot{r}^0)$ out of $\bb{S}$, and by the choice of $u$, $u$ must therefore force $(p^0,f_\de,a^0,\dot{r}^0)$ into $\bb{S}$.

Still working in $V[G\ast H_\al\ast \bar{A}']$, we apply a similar argument with respect to the condition $(p^1,f_\de,a^1,\dot{r}^1)$ to find a direct extension $\bar{r}^*$ of $u$ which forces that $(p^1,f_\de,a^1,\dot{r}^1)$ is in $\dot{\bb{S}}$. Now force below $\bar{r}^*$ in $\R$ to get a generic $\bar{R}^*$ so that $(p^i,f_\de,a^i,\dot{r}^i)$ is in $\bb{S}$ for each $i\in 2$. In particular, $(p^i,f_\de,a^i,\dot{r}^i)$ forces that $\dot{\tau}\res\sup_{\xi<\de}\ga_\xi=x_\de$. However, the weaker condition $(p^i_\be,f_\be,a^i_\be,\dot{r}^i_\be)$ forces that $\dot{\tau}\res\ga_\be=x^i_\be$, and $x^0_\be\neq x^1_\be$. This contradicts the fact that $\dot{\tau}$ is forced to be a branch.
\end{proof}

We now follow the arguments in the literature for constructing a splitting tree of conditions. Let $(\bar{a},\dot{\bar{r}})$ be a condition in $\bb{A}\ast\dot{\R}$ and $(p,f,a,\dot{r})$ a condition in $D$ so that $(\bar{a},\dot{\bar{r}})$ forces, over $V[G\ast H_\al]$, that $(p,f,a,\dot{r})$ satisfies the conclusion of Lemma \ref{lemma:extendTerm}. We may construct the following objects:
\begin{itemize}
\item a sequence $\la f_s:s\in 2^{<\ka}\ra$ which is decreasing in $\bb{T}$ and below $f$;
\item for each $s\in 2^{<\ka}$, a maximal antichain $X_s$ in $(\bb{A}\ast\dot{\R})/(\bar{a},\dot{r})$, and a sequence $\la\al_{s,x}:x\in X_s\ra$ of ordinals;
\item for each $s\in 2^{<\ka}$, a sequence of $\la(\ga_{s,x,0},\ga_{s,x,1}):x\in X_s\ra$ with $\ga_{s,x,0}\neq\ga_{s,x,1}$ for all $x\in X_s$, and with $\ga_{s,x,0},\ga_{s,x,1}$ on the same level;
\end{itemize}
satisfying that for all $s\in 2^{<\ka}$ and all $x\in X_s$,
$$
x\Vdash_{\bb{A}\ast\dot{\R}}\;(p,f_{s^{\frown}\la i\ra},a,\dot{r})\Vdash_{\dot{\bb{S}}}\dot{\tau}(\al_{s,x})=\ga_{s,x,i}.
$$
In addition to Lemma \ref{lemma:extendTerm}, the construction uses the $\ka^+$-closure of $\bb{T}$ in $V[G\ast H_\al]$, the fact that $\bb{A}\ast\dot{\R}$ is $\ka^+$-c.c., and Corollary \ref{cor:termExtend}; the last of these is used to see that as  we extend in $\bb{T}$, the condition is still forced to be in the quotient.

Now we are almost done. Let $\al^*:=\sup\lb\al_{s,x}:s\in 2^{<\ka}\we x\in X_s\rb$, noting that $\al^*<\lam$. For each $g\in 2^\ka$ (and note, crucially, that $2^\ka=\ka^{++}=\lam$ in $V[G\ast H_\al]$), we let $f_g$ be a lower bound for $\la f_s:s\sqsubset g\ra$ in the term ordering so that for some $\ga_g$ and some $x_g\in\bb{A}\ast\dot{\R}$,
$$
x_g\Vdash_{\bb{A}\ast\dot{\R}}(p,f_g,a,\dot{r})\Vdash_{\dot{\bb{S}}}\dot{\tau}(\check{\al}^*)=\check{\ga}_g.
$$
Using the fact that $2^\ka=\ka^{++}$, we may find $g\neq h$ so that $\ga_g=\ga_h$ and so that $x_g$ is compatible with $x_h$. Let $s$ be the largest initial segment on which $g$ and $h$ agree, where we assume (by relabeling if necessary) that $s^\frown\la 0\ra\sqsubset g$ and $s^\frown\la 1\ra\sqsubset h$. Now fix $x\in\bb{A}\ast\dot{\R}$ extending $x_g$ and $x_h$, and let $\bar{A}\ast\bar{R}$ be generic containing $x$. We next observe that $(p,f_g,a,\dot{r})$ and $(p,f_h,a,\dot{r})$ are in $\bb{S}$. Indeed, $x$ extends $(\bar{a},\dot{\bar{r}})$ and $(\bar{a},\dot{\bar{r}})$ forces that $(p,f,a,\dot{r})\in\dot{\bb{S}}$. Thus, by Corollary \ref{cor:termExtend}(1) and the fact that $f_g$ and $ f_h$ extend $f$ in $\bb{T}$, $x$ also forces both $(p,f_g,a,\dot{r})$ and $(p,f_h,a,\dot{r})$ into $\dot{\bb{S}}$.

Since $\ga:=\ga_g=\ga_h$, we have that both of these conditions force, in $\bb{S}$, that $\lb\bar{\ga}\in T:\bar{\ga}\leq_T\ga\rb$ is an initial segment of $\dot{\tau}$. Recalling that $X_s$ is maximal in $(\bb{A}\ast\dot{\R})/(\bar{a},\dot{\bar{r}})$, let $z\in X_s\cap(\bar{A}\ast\bar{R})$. Then $z$ forces that $(p,f_{s^\frown\la i\ra},a,\dot{r})$ forces that $\dot{\tau}(\al_{s,z})=\ga_{s,z,i}$. As 
$$
(p,f_g,a,\dot{r})\leq (p,f_{s^\frown\la 0\ra},a,\dot{r})\;\text{ and }\; (p,f_h,a,\dot{r})\leq (p,f_{s^\frown\la 1\ra},a,\dot{r})
$$
and as $\al_{s,z}\leq\al^*
$
we must have $\ga_{s,z,i}\leq_T\ga$ for each $i\in 2$. This contradicts the fact that $\ga_{s,z,0}$ and $\ga_{s,z,1}$ are distinct nodes of $T$ on the same level.\\

We close this paper with two questions.

\begin{que}
Is $\CSR(\ka^{++})$ indestructible under all $\ka^+$-Knaster forcings, or even all $\ka^+$-c.c. forcings?
\end{que}

For the next item, the results from \cite{FHS2:large} and \cite{FHS1:large} will likely be relevant.

\begin{que}
How large can one make $2^{\aleph_\om}$ in a model of $\TP(\aleph_{\om+2})\we\CSR(\aleph_{\om+2})$?
\end{que}


\begin{thebibliography}{0}



\bibitem{ABR:tree}
Uri Abraham. Aronszajn trees on $\aleph_2$ and $\aleph_3$. {\it Annals of Pure and Applied Logic}. {\bf 24} (1983) no. 3, 213-230.

\bibitem{BNG}
Omer Ben-Neria and Thomas Gilton. Club stationary reflection and the special Aronszajn tree property. {\it Canadian Journal of Mathematics}. {\bf 75} (2023) no. 3, 854-911.

\bibitem{CummingsHandbook}
James Cummings. Iterated forcing and elementary embeddings. In Matthew Foreman and Akihiro Kanamori, editors, {\it Handbook of Set Theory}, volume 2, pages 885-1148. Springer, 2010.


\bibitem{CummingsForeman}
James Cummings and Matthew Foreman. The tree property. {\it Advances in Mathematics}. {\bf 133} (1998), 1-32.

\bibitem{ScalesSquares}
James Cummings, Matthew Foreman, and Menachem Magidor. Scales, squares and stationary reflection. {\it Journal of Mathematical Logic}. {\bf 1} (2001) no. 1, 35-98.


\bibitem{8fold}
James Cummings, Sy-David Friedman, Menachem Magidor, Assaf Rinot,  and Dima Sinapova. The eightfold way. {\it The Journal of Symbolic Logic}. {\bf 83} (2018) no. 1, 349-371.


\bibitem{FontHayut}
Laura Fontanella and Yair Hayut. Square and Delta Reflection. 
{\it Annals of Pure and Applied Logic}. {\bf 167} (2016) no. 8, pp. 663 - 683.



\bibitem{FHS2:large}
Sy-David Friedman, Radek Honzik, and {\v S}{\'a}rka Stejskalov{\'a}. The tree property at $\aleph_{\omega+2}$ with a finite gap. {\it Fundamenta Mathematicae}. {\bf 251} (2020) no. 3, 219-244.


\bibitem{FHS1:large}
Sy-David Friedman, Radek Honzik, and {\v S}{\'a}rka Stejskalov{\'a}. The tree property at the double sucessor of a singular cardinal with a larger gap. {\it Annals of Pure and Applied Logic}. {\bf 169} (2018) 548-564.


\bibitem{Gilton-thesis}
Thomas D. Gilton. {\it On the Infinitary Combinatorics of Small Cardinals and the Cardinality of the Continuum}. PhD thesis, University of California, Los Angeles, 2019.



\bibitem{GLS:8fold}
Thomas Gilton, Maxwell Levine, and \v{S}\'{a}rka Stejskalov\'{a}. Trees and stationary reflection at double successors of regular cardinals. {\it Journal of Symbolic Logic}.


\bibitem{GK:8fold}
Thomas Gilton and John Krueger. A note on the eightfold way. {\it Proceedings of the American Mathematical Society}. {\bf 148}. (2020) no. 3 1283-1293.


\bibitem{GITIKo2}
Moti Gitik. The negation of singular cardinal hypothesis from {$o(\kappa) = \kappa^{++}$}. {\it Annals of Pure and Applied Logic}. {\bf 43} (1989) 209-234.

\bibitem{GitikHandbook}
Moti Gitik. Prikry-Type Forcings. In Matthew Foreman and Akihiro Kanamori, editors, {\it Handbook of Set Theory}, volume 2, pages 1351-1447. Springer, 2010.


\bibitem{GK:a}
Moti Gitik and John Krueger. Approachability at the second successor of a singular cardinal. {\it The Journal of Symbolic Logic}. {\bf 74} (2009) no. 4 1211-1224.

\bibitem{GS:sch}
Moti Gitik and Assaf Sharon, On SCH and the approachability property. {\it Proceedings of the American Mathematical Society}. {\bf 136} (2008), no. 1 311-320.


\bibitem{Hamkins:LotteryPreparation}
Joel David Hamkins. The lottery preparation. {\it Annals of Pure and Applied Logic}. {\bf 101} (2000), no. 2-3, 103-146.


\bibitem{Hamkins:LaverDiamond}
Joel David Hamkins. A class of strong diamond principles. {\it ArXiv e-prints}, arXiv:math/0211419, 2002.



\bibitem{H:prikry}
Radek Honzik. Global singularization and the failure of {SCH}. {\it Annals of Pure and Applied Logic}. {\bf 161} (2010) no. 7 895-915.


\bibitem{HS:ind}
Radek Honzik, and {\v S}{\'a}rka Stejskalov{\'a}. Indestructibility of the tree property. {\it The Journal of Symbolic Logic}. {\bf 85} (2020) no. 1 467-485.

\bibitem{HS:u}
Radek Honzik and {\v S}{\'a}rka Stejskalov{\'a}. Small $\mathfrak{u}(\kappa)$ at singular $\kappa$ with compactness at $\kappa^{++}$. \emph{Archive for Mathematical Logic}. {\bf 61} (2022) 33-54.


\bibitem{JENfine}
R. Bj{\"{o}}rn Jensen. The Fine Structure of the Constructible Hierarchy. {\it Annals of Mathematical Logic}. {\bf 4} (1972) no. 3 229-308.


\bibitem{KunenBible}
Kenneth Kunen. Set Theory. Studies in Logic: Mathematical Logic and Foundations. Vol 34, College Publications, London, 2011, viii+401 pp. 

\bibitem{LambieHansonHayut}
Chris Lambie-Hanson and Yair Hayut. Simultaneous stationary reflection and square sequences. {\it Journal of Mathematical Logic}. {\bf 17} (2017) no. 2.

\bibitem{Laver:indestructible}
Richard Laver. Making the supercompactness of $\kappa$ indestructible under
$\kappa$-directed closed forcing. {\it Israel Journal of Mathematics}. {\bf 29} (1978) no. 4 385–388.

\bibitem{M:sr}
Menachem Magidor. Reflecting stationary sets. {\it The Journal of Symbolic Logic}. {\bf 47} (1982) no. 4 755-771.



\bibitem{M:tree}
William J. Mitchell. {A}ronszajn trees and the independence of the transfer property. {\it Annals of Mathematical Logic} {\bf 5} (1972) no. 1, 21-46.

\bibitem{SigmaPrikry1}
Alejandro Poveda, Assaf Rinot, and Dima Sinapova. Sigma Prikry Forcing I: The Axioms. {\it Canadian Journal of Mathematics} {\bf 73} (2021) no. 5 1205-1238.

\bibitem{SigmaPrikry2}
Alejandro Poveda, Assaf Rinot, and Dima Sinapova. Sigma Prikry Forcing II:  Iteration Scheme. {\it Journal of Mathematical Logic} {\bf 22} (2022) no. 3, article number 2150019.


\bibitem{ShelahApproachability}
Saharon Shelah. On successors of singular cardinals, pp. 357-80 in {\it Logic Colloquium '78 (Mons, 1978)}, vol. 97 of {\it Studies in the Logic and Foundations of Mathematics}, North Holland, Amsterdam, 1979.


\bibitem{SU:tree}
Dima Sinapova and Spencer Unger.  The tree property at $\aleph_{\omega^2+1}$ and $\aleph_{\omega^2+2}$. {\it Journal of Symbolic Logic}. {\bf 82} (2018) 669-682.


\end{thebibliography}
\end{document}